\def\paragraph{\@startsection{paragraph}{4}%
  \z@\z@{-\fontdimen2\font}%
  {\normalfont\bfseries}}
\newtheorem{thm}{Theorem}[section]
\newtheorem{lem}[thm]{Lemma}
\newtheorem{cor}[thm]{Corollary}
\newtheorem{prop}[thm]{Proposition}
\theoremstyle{definition}
\newtheorem{dfn}[thm]{Definition}
\newtheorem{asm}{Assumption}
\newtheorem{cond}{Condition}
\newtheorem{rem}[thm]{Remark}
\newenvironment{manualasm}[1]
  {\innermanualasm}
  {\endinnermanualasm}
\newenvironment{manualcond}[1]
  {\innermanualcond}
  {\endinnermanualcond}
\newcommand{\E}{\mathbb{E}}
\newcommand{\Lin}{\mathbb{L}}
\newcommand{\N}{\mathbb{N}}
\renewcommand{\P}{\mathbb{P}}
\newcommand{\R}{\mathbb{R}}
\renewcommand{\S}{\mathbb{S}}
\newcommand{\V}{\mathbb{V}}
\newcommand{\W}{\mathbb{W}}
\newcommand{\Z}{\mathbb{Z}}
\newcommand{\ind}{\mathds{1}}
\newcommand{\bA}{\mathbf{A}}
\newcommand{\bB}{\mathbf{B}}
\newcommand{\bJ}{\mathbf{J}}
\newcommand{\bK}{\mathbf{K}}
\newcommand{\bS}{\mathbf{S}}
\newcommand{\bT}{\mathbf{T}}
\newcommand{\bU}{\mathbf{U}}
\newcommand{\bX}{\mathbf{X}}
\newcommand{\bY}{\mathbf{Y}}
\newcommand{\bOm}{\mathbf{\Omega}}
\newcommand{\vx}{\mathbf{x}}
\newcommand{\vy}{\mathbf{y}}
\newcommand{\vf}{\mathbf{f}}
\newcommand{\vg}{\mathbf{g}}
\newcommand{\vh}{\mathbf{h}}
\newcommand{\vm}{\mathbf{m}}
\newcommand{\vn}{\mathbf{n}}
\newcommand{\vp}{\mathbf{p}}
\newcommand{\vr}{\mathbf{r}}
\newcommand{\vu}{\mathbf{u}}
\newcommand{\vv}{\mathbf{v}}
\newcommand{\vw}{\mathbf{w}}
\newcommand{\vz}{\mathbf{z}}
\newcommand{\vO}{\mathbf{0}}
\newcommand{\balpha}{\boldsymbol\alpha}
\newcommand{\bbeta}{\boldsymbol\beta}
\newcommand{\bmu}{\boldsymbol\mu}
\newcommand{\bomega}{\boldsymbol\omega}
\newcommand{\bsigma}{\boldsymbol\sigma}
\newcommand{\bvarphi}{\boldsymbol\varphi}
\newcommand{\mM}{\mathrm{M}}
\newcommand{\mF}{\mathrm{F}}
\newcommand{\cA}{\mathcal{A}}
\newcommand{\cB}{\mathcal{B}}
\newcommand{\cC}{\mathcal{C}}
\newcommand{\cO}{\mathcal{O}}
\newcommand{\cL}{\mathcal{L}}
\newcommand{\cM}{\mathcal{M}}
\newcommand{\cP}{\mathcal{P}}
\newcommand{\cQ}{\mathcal{Q}}
\newcommand{\cX}{\mathcal{X}}
\newcommand{\cY}{\mathcal{Y}}
\newcommand{\st}{\text{s.t.}}
\newcommand{\op}{\kappa'}
\newcommand{\od}{\mathrm{d}}
\newcommand{\x}{\times}
\newcommand{\grad}{\mathbf{grad}}
\newcommand{\Hess}{\mathrm{Hess}}
\newcommand{\e}{\mathrm{e}}
\newcommand{\lint}{\llbracket}
\newcommand{\rint}{\rrbracket}
\newcommand{\gmd}{\kappa^\star}
\newcommand{\gmp}{\kappa'}
\renewcommand{\epsilon}{\varepsilon}
\renewcommand{\div}{\mathrm{div}}
\newcommand{\norm}[1]{\left\| #1\right\|}
\DeclareMathOperator{\argmin}{\mathrm{argmin}}
\renewcommand{\emph}[1]{{\it #1}}
\newcommand{\mylabel}[2]{#2\def\@currentlabel{#2}\label{#1}}
\title{Convergence rates for the moment-SoS hierarchy}
\author{Corbinian Schlosser$^{1}$, Matteo Tacchi-Bénard$^{2}$, Alexey Lazarev$^{3}$}
\date{}
\begin{document}

\maketitle

\footnotetext[1]{INRIA - Ecole Normale Supérieure - PSL Research University, Paris, France. \textbf{Corresponding author.}}
\footnotetext[2]{Univ. Grenoble Alpes, CNRS, Grenoble INP (Institute of Engineering Univ. Grenoble Alpes), GIPSA-lab, France.}
\footnotetext[3]{Toulouse Institute of Mathematics, France.}

\begin{abstract}
We introduce a comprehensive framework for analyzing convergence rates for infinite dimensional linear programming problems (LPs) within the context of the moment-sum-of-squares hierarchy. Our primary focus is on extending the existing convergence rate analysis, initially developed for polynomial optimization, to the more general and challenging domain of the generalized moment problem. We establish an easy-to-follow procedure for obtaining convergence rates. Our methodology is based on, firstly, a state-of-the-art degree bound for Putinar's Positivstellensatz, secondly,  quantitative polynomial approximation bounds, and, thirdly, a geometric Slater condition on the infinite-dimensional LP. We address a broad problem formulation that encompasses various applications, such as optimal control, volume computation and exit location of stochastic processes. We illustrate the procedure {on} these three problems and, using a recent improvement on effective versions of Putinar's Positivstellensatz, we improve existing convergence rates.

\begin{center}
{\bf Keywords}
\end{center}

Real algebraic geometry; convex optimization; convergence rates; optimal control; stochastic processes, numerical methods for multivariate integration.

\begin{center}
{\bf Acknowledgments}
\end{center}

The work of Corbinian Schlosser was supported by the Polynomial Optimization, Efficiency through Moments and Algebra (POEMA) network and by the European Research Council (grant REAL 947908).

The work of M. Tacchi was funded by the Swiss National Science Foundation under the ``NCCR Automation'' grant n$^\circ$51NF40{\_}180545.

The work of Alexey Lazarev has benefited from the AI Interdisciplinary Institute ANITI. ANITI is funded by the French "Investing for the Future - PIA3" program under the Grant agreement n$^\circ$ANR-19-PI3A-0004.

The authors are very grateful to Lorenzo Baldi for his comments and advice on this work. We also want to express our thanks to the anonymous reviewers whose thoughtful comments helped us to improve the text significantly.
\end{abstract}

\tableofcontents

\section{Introduction}

\paragraph{\bf Context.}

In recent years, various kinds of non-linear problems have been recast, approximated, or bounded using convex sum-of-squares (SoS) or moment-SoS problems. The list of methods and applications is broad, see for instance \cite{lasserre2009moments,parrilo2020sum,marshall2008positive} for an introduction to polynomial optimization and moment problems with many applications, \cite{parrilo2000structured, parrilo2003semidefinite, ahmadi2013stability} for applications to polynomial optimization and control and complexity, \cite{helton2007linear, scheiderer2018spectrahedra} for semidefinite representations of subsets of euclidean space, \cite{laurent2005semidefinite} for integer programming, \cite{rudi2024finding} for applications to machine learning, \cite{barak2014sum} for application to complexity theory, \cite{de2002approximation} for applications to approximating the stability number of a graph, \cite{polak2019new} for bounding the Shannon capacity, \cite{schmudgen2017moment} for addressing the moment problem in probability and operator theory, \cite{curmei2023shape} for shape constraint optimization, \cite{bertsimas2002relation} for option pricing, \cite{hohmann2020moment} for application to energy storage, \cite{laraki2012semidefinite} for applications to game theory, or \cite{fang2021quantum,klep2023state} for its use in quantum information theory. A common pillar among the listed examples is that non-negativity is certified through SoS representations or moment constraints. When this pillar is fruitfully paired with semidefinite representations of SoS \cite{lasserre2009moments,parrilo2000structured,scheiderer2024book}, efficient methods for solving SDPs \cite{vandenberghe1996semidefinite,vandenberghe2015chordal} (or SoS tailored methods \cite{nesterov2000squared,riener2013exploiting,korda2023convergence,papachristodoulou2013sostools,blekherman2021sparse}) and (effective) Positivstellensätze \cite{schmudgen2017moment,putinar1993positive,nie2007complexity,scheiderer2017psatz,blekherman2019bounds,laurent2023effective,baldi2023psatz,slot2021christoffel}, an efficient, strong and widely applicable framework is built.

Some of the above applications have the goal of verifying the non-negativity of a fixed polynomial. 
In others of the above applications (such as \cite{curmei2023shape,bertsimas2002relation,laraki2012semidefinite,rudi2024finding,parrilo2000structured}) the polynomial, for which we want to certify non-negativity, is a variable. 
This contribution investigates the most general case where polynomials are decision variables of an infinite dimensional linear programming problem (LP), whose Lagrangian dual is known as the generalized moment problem (GMP). The GMP is approximated with the moment-SoS hierarchy: this work proposes a systematic methodology to compute convergence rates for such approximations.

In recent years, many (nonlinear) problems have been formulated via GMPs. Closest to our setting are, among others, volume computation for semialgebraic sets \cite{lasserre2017stokes}, optimal transport \cite{mula2024moment}, as well as set separation a la Urysohn \cite{korda2022urysohn}. Other examples arise from dynamical systems and include optimal control \cite{korda2017convergence}, stability analysis \cite{jones2021converse,papachristodoulou2002construction}, localization of global attractors \cite{goluskin2020bounding,schlosser2021converging,jones2021converse}, safety analysis~\cite{miller2023bounding}, as well as partial differential equations~\cite{marx2020burgers} and variational inequalities~\cite{fantuzzi2022verification}. Due to the natural role of Borel measures in the GMP, it is also widely used to study stochastic systems, with applications to exit location \cite{henrion2023moment}, infinite time averaging \cite{fantuzzi2016bounds} and peak estimation~\cite{miller2023var}.

Under certain compactness assumptions, the problems mentioned above can be represented in the framework of GMPs by dual pairs of linear programming problems: the primal problem (measure LP) is defined on the infinite-dimensional space of Borel measures and features (possibly infinitely many) linear equality constraints; the dual problem (function LP) is defined on a (possibly infinite-dimensional) vector space of continuous functions (or polynomials) and features infinite dimensional (functional) linear inequality constraints. The moment-sum-of-squares (moment-SoS) hierarchy is a two-step procedure that provides a powerful tool for tackling such linear programming problems and has been applied widely. For the primal problem, in the first step, the primal decision variables, i.e. Borel measures, are represented by their moments, which are characterized via linear matrix inequality (LMI) constraints on the so-called moment matrices. In the second step, the moment matrices are truncated, i.e. only moments up to a finite degree $\ell \in \N$ are considered and paired with the constraints in the LP; resulting in a hierarchy of semidefinite programs. This operation is referred to as \textit{moment relaxation}. By duality, this procedure leads to a tightening in its dual, the function LP. There, the inequality constraints of the function LP are first strengthened to SoS constraints (by the aid of Putinar's Positivstellensatz \cite{putinar1993positive}). In the second step, the degree of the SoS polynomials is truncated, thus obtaining a hierarchy of so-called \textit{SoS strengthenings}.
The scheme of the moment-SoS hierarchy approach is summarized in Figure \ref{fig:hierarchy}.

\begin{figure}
    \centering
    \includegraphics[scale=0.45]{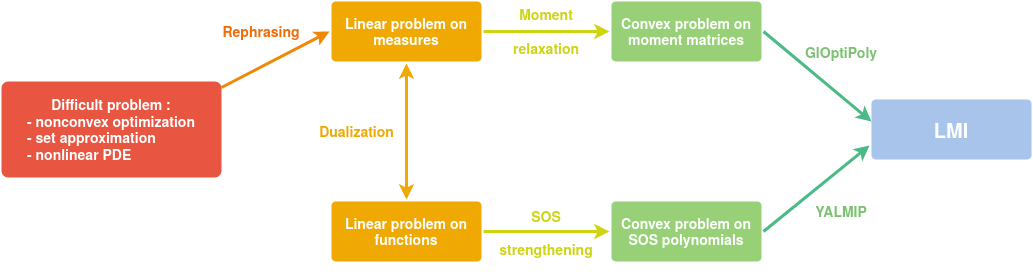}
    \caption{The standard application of the moment-SoS hierarchy.}
    \label{fig:hierarchy}
\end{figure}
\vspace{2mm}

The moment-SoS hierarchy provides guaranteed convergence \cite{lasserre2009moments,nie2014exact,tacchi2022convergence} but, apart from the case of polynomial optimization, the speed of the convergence for the infinite-dimensional problems has been rarely investigated. Two examples where explicit convergence rates were derived can be found in \cite{korda2017convergence} and \cite{korda2018convergence} where a slow convergence rate was presented based on~\cite{nie2007complexity}. Since then, the problem of providing bounds on the minimal truncation required to fully represent positive polynomials as sums of squares has been deeply studied, with recent strong improvements both in the generic case~\cite{baldi2021moment,baldi2022moment,slot2022convergence} and in specific settings~\cite{slot2021christoffel,baldi2023psatz,fang2021quantum,bach2023exponential}.

\paragraph{\bf Contribution.}

The speed of convergence can be derived from such bounds, and is determined by two main factors:
\begin{enumerate}
    \item[1.]\label{Speed1} The regularity of optimal solutions to the function LP and, if they are not polynomial, their approximation with polynomials;
    \item[2.] \label{Speed2} The degree $\ell \in \N$ needed for an SoS feasibility certificate for the approximating polynomials.
\end{enumerate}
We treat those two principal concepts in Section \ref{sec:method} and aim at describing an interplay between results on degree bounds in Putinar's Positivstellensatz, structural approximation properties for polynomials and compatibility conditions of the LPs concerning polynomials. 

The main objective of this article is to provide a method for deriving convergence rates of the moment-SoS hierarchy when applied to a specific instance of the GMP, using the degree bounds provided in~\cite{baldi2021moment}. We provide examples of computing and improving the convergence rates of the hierarchy with the state-of-the-art effective versions of Putinar's Positivstellensatz. Namely, we use \cite{baldi2021moment} to improve the convergence rates for the optimal control problem stated in \cite{korda2017convergence} and for the standard volume problem \cite{korda2018convergence}. Additionally, we derive an original convergence rate for the problem of exit location of stochastic processes \cite{henrion2023moment}. Last but not least, we use our methodology to answer a long-standing question related to volume computation, namely: how much does the use of Stokes' theorem improve the moment-SoS hierarchy for volume computation? Indeed, the first application of the moment-SoS hierarchy to this problem exhibited a very slow convergence in practice~\cite{korda2018convergence} and was soon complemented with~\cite{lasserre2017stokes}, yielding a sharp improvement in the numerical accuracy of the relaxations. To further understand this improvement, a qualitative study~\cite{tacchi2023stokes} showed that it was related to the Gibbs phenomenon and regularity of solutions in the function LP. In this work, we complement the qualitative study with a first \textit{quantitative} analysis of the two formulations, by computing and comparing {\textit{upper bounds}} on the convergence rates in both cases.

In most examples covered in this work, we get a convergence rate of $\cO(\ell^{-\nicefrac{1}{c}})$ for some constant $c >0$ (the only exception being generic optimal control, for which the rate is $\cO\left(\nicefrac{1}{\log \ell }\right)$, see {\bf Corollary \ref{prop:PropOCPConvRate}}, although mild assumptions allowed us to bring back a polynomial convergence rate in {\bf Theorem \ref{prop:ocpConvRatePoly}}), which is a significant improvement compared to the {\bf double} $\log$ bounds obtained in~\cite{korda2017convergence,korda2018convergence}.

\paragraph{\bf Outline.}

The paper is structured as follows: In Section \ref{sec:prelim}, we fix the notation and focus on the central underlying concept of moment-SoS hierarchy for the generalized moment problem (GMP). Section \ref{sec:rates} recalls the current state of an effective version of Putinar's Positivstellensatz and we introduce and motivate our general procedure for obtaining effective degree bounds for SoS tightening of the infinite-dimensional function LPs. In Sections \ref{sec:ApplicationDynSys} and \ref{sec:vol} we apply the procedure to establish convergence rates for old and new instances of the moment-SoS hierarchy and where strong improvements compared to the existing rates are demonstrated. Section \ref{sec:ApplicationDynSys} treats dynamical systems, where optimal control for deterministic systems and the exit location problem of stochastic processes are considered. Section \ref{sec:vol} is concerned with volume computation with and without the aid of reinforcing Stokes constraints. {Before concluding in Section~\ref{sec:Conclusion}, we discuss in Section \ref{sec:Limitations} the current limitations of the approach and possible improvements based on further development of effective Positivstellensätze.}

\section{Preliminaries: the moment-SoS hierarchy} \label{sec:prelim}

\subsection{Basic notations}

We work with the standard notations for usual sets $\R$ (real numbers), $\Z$ (integers), $\N$ (natural integers), for which the superscript $^\star$ indicates that we remove the element $0$. Real intervals are denoted $[a,b]$ when closed, $(a,b)$ when open; integer intervals are denoted $\lint a,b \rint$ (with particular case $\lint n \rint = \lint 1,n \rint$ for $n \in \N^\star$). For $x \in \R$, $\lfloor x \rfloor := \max([x-1,x]\cap\Z)$ denotes the floor and $\lceil x \rceil := \min([x,x+1]\cap\Z)$ denotes the ceiling.

For a topological space $\bX$, $\cC(\bX)$ denotes the space of bounded continuous functions from $\bX$ to $\R$ equipped with the topology of uniform convergence. For two real vector spaces $\V$, $\W$, the set $\Lin(\V,\W)$ denotes the space of linear maps from $\V$ to $\W$. For a real Banach space $\V$, define the dual space $\V' := \Lin(\V,\R) \cap \cC(\V)$, with duality $\langle v , v' \rangle \in \R$, $v \in \V, v' \in \V'$. In particular, for a compact Hausdorff space $\bX$, the Banach space of signed Radon measures $\cM(\bX)$ is identified with $\cC(\bX)'$.

For $\balpha = (\alpha_1,\ldots,\alpha_n) \in \N^n$, $|\balpha| := \alpha_1+\ldots+\alpha_n$ is the range of $\balpha$ and $ (x_1,\ldots,x_n) = \vx \mapsto \vx^{\balpha} := x_1^{\alpha_1}\cdots x_n^{\alpha_n}$ is the corresponding monomial. For $n,d \in \N$, $\N^n_d := \{\balpha \in \N^n \; ; |\balpha| \leq d\}$ is the set of bounded multi-indices, $\R_d[\vx]:=\{\vx \mapsto \sum_{|\balpha| \leq d} c_{\balpha} \, \vx^{\balpha} \; ; (c_{\balpha})_{\balpha} \in \R^{\N^n_d}\}$ is the space of polynomial functions of degree at most $d$, $\R[\vx] := \cup_{d\in\N} \R_d[\vx]$ is the space of polynomials. For $\bOm \subset \R^n$, $\cP_d(\bOm) := \cC(\bOm)\cap\R_d[\vx]$ and $\cP(\bOm) := \cC(\bOm) \cap \R[\vx]$.

We denote by $\R^n_+$ the cone of coordinate-wise non-negative vectors in $\R^n$. By $\cC(\bX)_+$ (respectively $\cC(\bX)_\oplus$) we denote the cone of pointwise (strictly) non-negative functions. The dual cone of $\cC(\bX)_+$ of non-negative Borel measures is denoted by $\cM(\bX)_+$. {The supremum norm on $\cC(\bX)_+$ will be denoted by $\|\cdot\|_{\infty}^{\bX}$.}

$\S^n = \{\mM \in \R^{n\x n} \; ; \mM^\top = \mM\}$ is the vector space of symmetric real matrices of size $n$ and $\S^n_+$ is the usual cone of {positive semidefinite matrices}. For $\vh = (h_1,\ldots,h_r) \in \R[\vx]^r$, define the basic semialgebraic set $\bS(\vh) :={\{\vx \in \R^n: \vh(\vx) \in \R_+^r\} }$.

We make the convention that the {\bf Assumptions} hold throughout the whole paper while {\bf Conditions} hold only when explicitly stated.

\subsection{(Effective) Putinar's Positivstellensatz and polynomial operators} \label{sec:lasserre}

The moment-SoS hierarchy builds on real algebraic geometry certificates for non-negativity of polynomials {and} signed Borel measures. Further it allows {recasting} certain infinite dimensional LPs a sequence of finite-dimensional convex optimization problems. A central pillar in this procedure is the celebrated Positivstellensatz by Putinar.

\begin{thm}[Putinar's Positivstellensatz {\cite[Theorem 1.3 \& Lemma 3.2]{putinar1993positive}}] \label{thm:psatz} \leavevmode

Let $r,n \in \N^\star$ be positive integers, $\vh \in \R[\vx]^r$ a family of $r$ polynomials in $n$ variables.

\noindent Introduce the closed semialgebraic set ${\bX} := \left\{\vx \in \R^n \; ; \vh(\vx) \in \R_+^r\right\}$ as well as the {\bf convex cones}

\begin{center}
\begin{tabular}{ll}
$\cP({\bX})_+ := \left\{p \in \R[\vx] \; ; p({\bX}) \subset \R_+\right\}$ & $\Sigma[\vx] := \left\{\sum_{k=1}^K p_k^2 \; ; K \in \N^\star, p_1,\ldots,p_K \in \R[\vx]\right\}$ \\
$\cP({\bX})_\oplus := \left\{p \in \R[\vx] \; ; p({\bX}) \subset \R_\oplus\right\}$ & $\cQ(\vh) := \left\{\begin{pmatrix} \vh^\top & 1 \end{pmatrix} \bsigma \; ; \bsigma \in \Sigma[\vx]^{r+1}\right\} \subset \cP({\bX})_+$.
\end{tabular}
\end{center}

\noindent If there exists $R \in \mathbb{Q}$ s.t. $R^2 - \vx^\top\vx \in \cQ(\vh)$ \emph{(Archimedean property)}, then $\cP({\bX})_\oplus \subset \cQ(\vh)$.

\noindent Under the same Archimedean condition, the dual cones $\cQ(\vh)'$ and $\cM({\bX})_+$ are isomorphic.
\end{thm}

\begin{rem}[On the Archimedean condition] \label{rem:archi} \leavevmode

As $\cQ(\vh) \subset \cP({\bX})_+$, the Archimedean property automatically yields that 
\begin{equation}\label{eq:ballinc}
    {\bX} \subset \bB_R := \left\{\vx \in \R^n \; ; \vx^\top\vx \leq R^2 \right\},
\end{equation}
i.e. ${\bX}$ is bounded (and thus compact as it is closed).  Conversely, if ${\bX} \subset \bB_R$ for some $R>0$, then adding a polynomial $h_{r+1} := R^2 - \vx^\top\vx$ to $\vh$ does not change the geometry of ${\bX}$, while it results in adding $h_{r+1}$ to $\cQ(\vh)$. Thus, in practice, the Archimedean condition is considered equivalent to compactness of ${\bX}$.
\end{rem}

Effective versions of Putinar's Positivstellensatz quantify the degree of SoS multipliers in a SoS representation of positive polynomials. Therefore it is helpful to introduce the truncated quadratic module $\cQ_{\ell}(\vh)$ defined for $\ell \in \N$ by
\begin{equation} \label{eq:QuadModule}
\cQ_{\ell}(\vh) := \left\{ \begin{pmatrix} \vh^\top & 1 \end{pmatrix} \bsigma \in \cQ(\vh) \; ; \forall i \in \lint r \rint , \; \max(\deg(\sigma_i \, h_i), \deg(\sigma_{r+1})) \leq 2\ell \right\}
\end{equation}
which happens to be a finite-dimensional convex cone.

In this text, we will use an effective version of Putinar's Positivstellensatz from~\cite{baldi2021moment}. To state the result we need to introduce the \L{}ojasiewicz exponent.

\begin{thm}[\L{}ojasiewicz exponent {\cite[{\bf Theorem 2.3}, {\bf Definition 2.4}]{baldi2021moment}}] \label{thm:loja} \leavevmode

For $\vx \in [-1,1]^n$, let
$$ H(\vx) := |\min(h_1(\vx),\ldots,h_r(\vx),0)|
\qquad
D(\vx) := \min\{|\vx-\vx'| \; ; \vx' \in \bS(\vh)\},$$
where $|\vx| := \sqrt{\vx^\top\vx}$ is the Euclidean norm of $\vx$. Then there exists \L$,\mathfrak{c} \in \R_+^\star$ s.t. for $\vx \in [-1,1]^n$
\begin{equation} \label{eq:Loja}
D(\vx)^{\text{\L}} \leq \mathfrak{c} \, H(\vx).
\end{equation}
\end{thm}

For the statement of the following {\bf Theorem~\ref{thm:OldBaldi}} we introduce the additional notations
\begin{equation*}
    p^\star_\bX := \inf\limits_{\vx \in \bX} p(\vx), \quad \|p\| := \max\limits_{\vx \in [-1,1]^{n}} |p(\vx)|.
\end{equation*}

\begin{thm}[Effective Putinar Positivstellensatz {\cite[{\bf Theorem 1.7}]{baldi2021moment}}]\label{thm:OldBaldi} \leavevmode

Let $n \geq 2$, $p \in \cP(\bX)_\oplus$. Assume, what later will be denoted by $\mathbf{Assumption \ \ref{asm:poprateOld}}$, {that is}
\begin{equation*}
    1 - \vx^\top \vx \in \cQ(\vh) \quad \text{and} \quad \forall i \in \lint r \rint, \quad \text{and} \quad \|h_{i}\| \leq \frac{1}{2}.
\end{equation*}
Then one has
\begin{equation}\label{eq:effputinarOld}
\ell \geq \gamma(n,\vh) \deg(p)^{3.5n\text{\L}} \left(\nicefrac{\|p\|}{p^\star_\bX}\right)^{2.5n\text{\L}} \quad \Longrightarrow \quad p \in \cQ_\ell(\vh)
\end{equation}
where $1 \leq \gamma(n,\vh) \leq \Gamma \, n^3 \, 2^{5\text{\L}-1} \,r^n \, \mathfrak{c}^{2n} \, \deg(\vh)^n$ and $\Gamma > 0$ does not depend on $n,p,\vh$. In the rest of this paper, we will consider fixed $n$ and $\vh$, so that we simplify the notation $\gamma(n,\vh)$ into $\gamma$.
\end{thm}

\begin{rem}[Farkas Lemma] \leavevmode

Regardless of $n$, if $\deg(p) = \deg(\vh) = 1$ (affine forms), then  $p \in \cP(\bX)_+ \Longleftrightarrow p \in \cQ_1(\vh)$.
\end{rem}

In the rest of this section, we are concerned with operators $\cA$ that preserve polynomial structure.
\begin{dfn}[Polynomial operator]\label{def:PolyOp}
    Let $\bX \subset \R^{n}$ and $\bY \subset \R^{m}$ and $\cA: \cP(\bY)\rightarrow \cC(\bX)$ be a linear operator. We call $\cA$ a polynomial operator if $\cA \, w \in \cP(\bX)$ for all $w \in \cP(\bY)$.
\end{dfn}
Examples of polynomial operators are restriction operators $\cA \ w := w\big|_\bX$ when $\bY \supset \bX$, multiplication operators $\cA \ w := w \cdot p$ for $p\in \cP(\bX)$, differential operators $\cA:\cP(\bX)\rightarrow \cC(\bX)$ with $\bX = \bY$ and $ \cA \ w := \grad \ w \cdot \vp$ for a map $\vp\in \cP(\R^{n})^{n}$, or composition operators $\cA \ w:= w\circ p$ for a polynomial map $p:\bX \rightarrow \bY$.

Dual to polynomial operators are moment operators $\cB:\cM(\bX)\rightarrow \cM(\bY)$ whose action on an element $\mu \in \cM(\bX)$ can be described by an action on the moments of $\mu$. An operator $\cB$ is called a moment operator if there exists a sequence of polynomials $(\varphi_{\bbeta})_{\bbeta} \in \cP(\bX)^{\N^{n}}$ such that, for all $\mu \in \cM(\bX)$, $\cB \, \mu$ is the linear operator defined on the monomial basis by

\begin{equation} \label{eq:momop}
    \langle \cB \, \mu, \vy^{\bbeta}\rangle = \int \varphi_{\bbeta} \; \od\mu.
\end{equation}

\begin{lem}\label{lem:stab} \leavevmode
    Let $\cA:\cP(\bY) \rightarrow \cC(\bX)$ and $\cB:\cM(\bX)\rightarrow \cM(\bY)$ be linear operators with $\cB = \cA'$. Then $\cA$ is a polynomial operator if and only if $\cB$ is a moment-operator.
\end{lem}

\begin{proof}
    Let us assume that $\cB = \cA'$ is a moment operator. By linearity of $\cA'$, it is sufficient to prove that for all $\bbeta \in \N^{m}$, $\cA[\vy^{\bbeta}] \in \cP(\bX)$. Let $\bbeta \in \N^{m}$, $\vx \in \bX$ and consider the Dirac measure $\mu = \delta_{\vx}$. By ~\eqref{eq:momop} and by definition of the adjoint operator $\cB = \cA'$, one has,
    \begin{align*}
        \cA[\vy^{\bbeta}](\vx) & = \int \cA[\vy^{\bbeta}] \; \od\delta_{\vx} = \int \cA[\vy^{\bbeta}] \; \od\mu  = \langle \cB \, \mu , \vy^{\bbeta}\rangle \overset{\eqref{eq:momop}}{=} \int \varphi_{\bbeta} \; \od\mu = \int \varphi_{\bbeta} \; \od\delta_{\vx} = \varphi_{\bbeta}(\vx).
    \end{align*}
    As this holds for all $\vx \in \bX$, we deduce that $\cA[\vy^{\bbeta}] = \varphi_{\bbeta} \in \cP(\bX)$. We now prove the reverse implication. Let $\mu \in \cM(\bX)$, $\bbeta \in \N^{m}$. Then, one has
    $$\langle \cB \, \mu, \vy^{\bbeta}\rangle = \int \underset{\varphi_{\bbeta}}{\underbrace{\cA[\vy^{\bbeta}]}} \; \od\mu$$
    which concludes the proof because $\varphi_{\bbeta} = \cA[\vy^{\bbeta}] \in \cP(\bX)$ since $\cA$ is a polynomial operator.
\end{proof}

\begin{cor}[Action on bounded degree polynomials] \label{cor:momop} \leavevmode

    Let $\cA$ be a polynomial operator. For all $\ell \in \N$, there exists $d_\ell \in \N$ such that any $p \in \mathrm{Im}(\cA) \cap \R_{2\ell}[\vx]$ has a preimage of degree at most $d_\ell$:
    $$ \exists w \in \R_{d_\ell}[\vy] \quad \text{with} \quad p = \cA\, w. $$
\end{cor}
\begin{proof}
{The set $\mathrm{Im}(\cA) \cap \R_{2\ell}[\vx]$ is a finite dimensional vector space. Thus, there exist $u \in \N$ and $p_1,\ldots,p_u \in \R[\vy]$ such that
\begin{equation*}
    \mathrm{span}\{\cA p_1,\ldots,\cA p_u\} = \mathrm{Im}(\cA) \cap \R_{2\ell}[\vx].
\end{equation*}
Setting $d_\ell := \max\limits_{i = 1,\ldots, u} \deg p_i$, we have
\begin{equation*}
    \cA \left( \R[\vy]_{d_\ell}\right) \supset \cA \left(\mathrm{span}\{p_1,\ldots,p_u\} \right) = \mathrm{span}\{\cA p_1,\ldots,\cA p_u\} = \mathrm{Im}(\cA) \cap \R_{2\ell}[\vx].
\end{equation*}
This shows that any element in $\mathrm{Im}(\cA) \cap \R_{2\ell}[\vx]$ has a preimage in $\R[\vy]_{d_\ell}$, i.e. the statement.
}
\end{proof}

\subsection{Generalized Moment Problem} \label{sec:gmp}

Let $\bX \subset \R^{n}$ and $\bY \subset \R^{m}$ be compact. Using the duality between the space of continuous functions $\cC(\bX)$ and the space of Borel measures $\cM(\bX)$ we set

\begin{center}
\begin{tabular}{ll}
$\cX := \cC(\bX)$,  & $\cY \:= \cP(\bY)$.\\
$\cX' := \cM(\bX)$,  & $\cY' := \cP(\bY)'$,  \\\\
\end{tabular}
\end{center}

On $\cX$ we use the topology of uniform convergence, on $\cY$ we use an appropriate norm on functions depending on the problem at hand (e.g. a $C^k$ norm or an $L^s$ norm; this choice determines $\cY'$) and on $\cX'$ and $\cY'$ we use the induced weak-$*$ topologies.

Let $\cA : \cY \longrightarrow \cX$ be a continuous linear operator, $T \in \cY'$ be a linear form (i.e. \textit{moment} sequence), $g \in \cX$ be a vector of continuous functions. We define the functional LP as

\begin{equation}\label{eq:smp}
\fcolorbox{red}{white}{$
\begin{array}{lcl}
\kappa^\star := & \inf\limits_{w \in \cY} & \langle T ,  w \rangle \\
& \st & \cA \, w - g \in \cC(\bX)_+
\end{array}
$}
\end{equation}

\begin{rem}[On the generality of~\eqref{eq:smp}] \leavevmode
    Note that the generic framework $\bX = \bS(\vh)$ allows for any compact basic semialgebraic set $\bX$. In particular, binary optimization has an LP formulation as~\eqref{eq:smp} which is proved to be equivalent to semidefinite programming~\cite{laurent2005semidefinite}. From this simple remark, one can observe that problem~\eqref{eq:smp} can feature finite-sized PSD constraints, as proposed in~\cite{miller2023var}. Hence, the tools displayed in this work can be used on functional LPs featuring finite-sized LP, convex QP, SOCP, and SDP constraints on coefficients of the decision variables.
\end{rem}

For the rest of the text, we will make the following assumptions.

\begin{asm}[Existence of feasible solutions]\label{asm:existenceSol}\leavevmode
The feasible set of~\eqref{eq:smp} is not empty.
\end{asm}

\begin{asm}[Polynomial operator $A$ and polynomial $g$] \label{asm:momop} \leavevmode
The operator $\cA$ is a polynomial operator and $g \in \cP(\bX)$.
\end{asm}

\begin{asm}\label{asm:poprateOld} \leavevmode The set $\bX$ is a compact basic semialgebraic with representation $\bX := \bS(\vh) \subset \R^{n}$ with $\vh = (h_{1},\ldots,h_{r}) \in \R[\vx]^{r}$ for some $r \in \N^\star$, and for which it holds
\begin{enumerate}
    \item $1 - \vx^\top \vx \in \cQ(\vh)$ \emph{(normalized Archimedean property)},
    \item $\forall j \in \lint r \rint, \quad \|h_{j}\| := \max\limits_{\vx \in [-1,1]^n} |h_{j}(\vx)| \leq \frac{1}{2}$
\end{enumerate}
\end{asm}

The Assumptions~\ref{asm:existenceSol},~\ref{asm:momop} and~\ref{asm:poprateOld} guarantee that the LP~\ref{eq:smp} is non-trivial, has polynomial structure, and that we will be able to apply the effective Positivstellensatz {\bf Theorem~\ref{thm:OldBaldi}}.

\begin{rem}[On the validity of {\bf Assumption \ref{asm:momop}}] \label{rem:stab} \leavevmode
    
    To our best knowledge, many existing formulations of~\eqref{eq:smp} for problems with polynomial data (including those in \cite{decastro2016super, fantuzzi2022verification, fantuzzi2020bounding, goluskin2020bounding, jones2021converse, korda2017convergence, lasserre2009moments, miller2023bounding, mula2024moment, schlosser2021converging}) satisfy {\bf Assumption \ref{asm:momop}}. Indeed, operations such as summation, polynomial multiplication/pushforward/composition and differentiation are polynomial operators.
\end{rem}

Through Assumption \ref{asm:poprateOld} we impose the algebraic structure on the problem that will allow us to apply the effective version of Putinar's Positivstellensatz \textbf{Theorem}~\ref{thm:OldBaldi}.

\begin{rem}[On the validity of {\bf Assumption \ref{asm:poprateOld}}] \leavevmode

    The normalized Archimedean property can be seen as a restatement of compactness of $\bX$. For compact $\bX = \bS(\vh)$, up to rescaling, $\bS(\vh)$ is included in the unit ball so that it is possible to add the redundant inequality constraint $1 - \vx^\top\vx \geq 0$ to the description of $\bS(\vh)$. This is the practical approach for guaranteeing {\bf Assumption \ref{asm:poprateOld}.1}. The second condition in {\bf Assumption \ref{asm:poprateOld}} is only of technical nature and can be obtained by scaling $\vh$.
\end{rem}

The optimization problem~\eqref{eq:smp} is an infinite dimensional instance of conically constrained linear programs (CCLP), and as such it is subject to Lagrange duality. To formulate the dual problem, we introduce the Lagrange operator
\begin{equation} \label{eq:lagrange}
\Lambda := \cX' \x \cY \ni (\mu,w) \longmapsto \langle T ,  w \rangle + \int g - \cA \, w \; \od\mu.
\end{equation}

It is straightforward (using the fact that $\sup_{\mu \in \cM(\bX)_+} \int g - \cA \, w \; \od\mu = \infty$ iff there exists $\vx\in \bX$ with $\cA \, w (\vx) - g(\vx) <0$) that
$$\gmd = \inf\limits_{w \in \cP(\bY)}\vphantom\int\sup\limits_{\mu \in \cM(\bX)_+ }\vphantom\sum\Lambda(\mu,w)$$
Finally, the dual problem to \eqref{eq:smp} is obtained by swapping the $\sup$ and $\inf$ operators:
$$\gmp := \sup\limits_{\mu \in \cM(\bX)_+ }\inf\limits_{w \in \cP(\bY)}\vphantom\sum\Lambda(\mu,w).$$

Using the the adjoint operator of $\cA\big|_{\cY}$, given by $\cA': \cX' \rightarrow \cY'$  with
$$ \langle \cA' \mu , w \rangle :=  \int \cA \, w \; \od\mu $$
for $w\in \cY = \cP(\bY)$ and $\mu \in \cX' = \cM(\cX)$, we have $\Lambda(\mu,w) = \displaystyle \langle T - \cA'\mu , w \rangle + \int g \; \od\mu$. Again, $\inf_{w\in\cY} \langle T - \cA'\mu , w \rangle = -\infty$ iff $T - \cA'\mu \neq 0$, gives
\begin{equation} \label{eq:gmp}
\fcolorbox{red}{white}{$
\begin{array}{lcl}
\gmp = & \sup\limits_{\mu\in\cX'} & \displaystyle\int g \; \od \mu \\
& \st & \cA' \mu = T.
\end{array}
$}
\end{equation}
{We refer to~\eqref{eq:gmp} as a \textit{generalized moment problem} (GMP).\footnote{The case where $g=0$, $\bX = \bY$, $\cA' \, \mu = (\int \vx^{\balpha} \; \od\mu)_{\balpha \in \N^n}$ {and $T = (t_{\balpha})_{\balpha \in \N^n}$ a given sequence of scalars}, is called the $\bX$-moment problem.} While in this work we compute convergence rates through the analysis of~\eqref{eq:smp} (so that we call it the primal problem, and introduced the GMP as its dual), we wish to emphasize that in many cases, the meaning of the LP at hand is best captured by~\eqref{eq:gmp}, which in most existing contributions is the first to be formulated, and hence often called primal problem.}

This duality between~\eqref{eq:gmp} and~\eqref{eq:smp} comes with two interesting properties~\cite{barvinok2002convexity,tacchi2022convergence}, which we state next.

\begin{prop}[Weak duality] \label{prop:weak} \leavevmode

In all generality, with the above notations, one has $\gmp \leq \gmd$.
\end{prop}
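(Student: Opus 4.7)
The plan is to run the standard weak duality argument for conically constrained linear programs, specialized to the GMP setting. Concretely, I would fix arbitrary feasible points $\bmu = (\mu_1,\ldots,\mu_M) \in \cX$ for~\eqref{eq:gmp} and $\vw \in \cY'$ for~\eqref{eq:gmp'}, and chase through the Lagrangian~\eqref{eq:lagrange} (in its general $M,N$ form) to show that
\[
\int \vg \cdot \od \bmu \;\leq\; \langle \bT, \vw \rangle.
\]
Taking the supremum over feasible $\bmu$ on the left and the infimum over feasible $\vw$ on the right then yields $\gmp \leq \gmd$. By Assumption~\ref{asm:existenceSol}, the primal is feasible so the supremum is well-defined; if the dual has no feasible point, the infimum is $+\infty$ and the inequality is trivial.

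The core computation is a two-line chain of (in)equalities. Primal feasibility gives $\cA \, \bmu = \bT$, so
\[
\langle \bT, \vw \rangle \;=\; \langle \cA \, \bmu, \vw \rangle \;=\; \int \cA' \, \vw \cdot \od\bmu \;=\; \sum_{i=1}^M \int (\cA' \, \vw)_i \; \od\mu_i,
\]
where the middle equality uses the definition of the adjoint $\cA'$ extended to the product setting. Dual feasibility gives $(\cA' \, \vw)_i - g_i \in \cC(\bX_i)_+$, i.e.\ $(\cA' \, \vw)_i \geq g_i$ pointwise on $\bX_i$, and primal feasibility gives $\mu_i \in \cM(\bX_i)_+$. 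Combining these,
\[
\sum_{i=1}^M \int (\cA' \, \vw)_i \; \od\mu_i \;\geq\; \sum_{i=1}^M \int g_i \; \od\mu_i \;=\; \int \vg \cdot \od\bmu,
\]
which is exactly the desired inequality.

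The only non-routine point is to make sure the adjoint relation $\int \cA' \vw \cdot \od\bmu = \langle \cA \bmu, \vw \rangle$ is well-posed in the product setting: the paper defined $\cA'$ in the simplified $M=N=1$ setting, so I would briefly note that $\cA' : \cY \longrightarrow \cX'$ is defined componentwise in the obvious manner, using continuity of $\cA$ together with the identification $\cM(\bX_i) = \cC(\bX_i)'$ to ensure that $\cA'$ maps into $\cX' = \prod_i \cC(\bX_i)$, and that the duality bracket on $\cY'$ evaluated at $\bT = \cA\bmu$ decomposes as the sum of integrals above. Once this bookkeeping is in place, the proof reduces to the two-step computation, so the main (very mild) obstacle is notational rather than mathematical.
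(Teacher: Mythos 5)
Your proof is correct and is the standard weak duality argument: the paper does not give its own proof but instead cites references and derives the dual by swapping $\sup$ and $\inf$ in the Lagrangian, which implicitly encodes the same $\sup\inf \leq \inf\sup$ reasoning that your feasible-point comparison makes explicit. The bookkeeping you flag (well-posedness of the adjoint in the product setting, finiteness of the integrals since each $(\cA'\vw)_i \in \cC(\bX_i)$ and each $\bX_i$ is compact) is indeed all that is needed and is handled correctly.
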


\begin{prop}[Strong duality] \label{prop:strong} \leavevmode

One has $\gmp = \gmd$ if one of the following two {\bf Conditions} is satisfied.
\end{prop}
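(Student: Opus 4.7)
The plan is to establish strong duality via Lagrangian saddle-point reasoning built on weak duality (Proposition \ref{prop:weak}): since $\gmp \le \gmd$ holds unconditionally, only the reverse inequality $\gmd \le \gmp$ must be obtained under each of the two \textbf{Conditions}. The cleanest route is to cast \eqref{eq:smp} and \eqref{eq:smp'} as a conically constrained linear program in the dual pair $(\cC(\bX),\cM(\bX))$ equipped with the weak-$*$ topology, together with $(\cY,\cY')$ equipped with the norm $\|\cdot\|_\cY$ chosen in Section \ref{sec:method}, and then invoke a standard Hahn--Banach separation theorem as found, e.g., in the framework of \cite{barvinok2002convexity}.

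First I would check that the operators involved are compatible with these topologies: the feasible set $\{\mu \in \cM(\bX)_+ : \cA\mu = T\}$ is weak-$*$ closed by continuity of $\cA$, and is weak-$*$ relatively compact on sublevel sets of $\mu \mapsto \mu(\bX)$ by Banach--Alaoglu. The two sufficient \textbf{Conditions} to be stated later almost certainly amount to (i) a \emph{geometric Slater condition} for \eqref{eq:smp'}, i.e.\ existence of $w_0 \in \cP(\bY)$ with $\cA' w_0 - g$ in the interior $\cC(\bX)_\oplus$ of the nonnegativity cone; and (ii) a \emph{closed image} / primal attainment condition ensuring that $\{(\cA\mu,\int g\,\od\mu) : \mu \in \cM(\bX)_+\} \subset \cY' \times \R$ is closed around the optimum. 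These are the conditions hinted at in the abstract and in the discussion of the convergence-rate methodology.

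Under Slater (i), I would argue by contradiction: assuming $\gmp < \gmd$, separate the point $(T,\gmp)$ from the closed convex image of the primal cone in $\cY' \times \R$ by a continuous linear functional, identified via duality with a pair $(w_*,\lambda_*)$. Feasibility of $w_0$ and the interiority $\cA'w_0 - g \in \cC(\bX)_\oplus$ force $\lambda_* > 0$, and after normalization $w_*/\lambda_*$ is feasible for \eqref{eq:smp'} with objective value at most $\gmp$, contradicting the definition of $\gmd$. Under the closedness condition (ii), I would instead show that the primal supremum is attained by a weak-$*$ limit point of a maximizing sequence (using Banach--Alaoglu plus closedness), and then derive absence of duality gap from the general result that closed-image LPs have no gap whenever the primal value is finite and attained.

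The main obstacle is the infinite-dimensional nature of $\cY$ together with the fact that its topology has not yet been specified at this point in the paper: the identification of $\cY'$, weak-$*$ continuity of $\cA$, and nonemptiness/characterization of the interior $\cC(\bX)_\oplus$ (harmless for compact Hausdorff $\bX$, but the pointwise interior condition $\cA'w_0 - g > 0$ on $\bX$ is genuinely stronger than plain nonnegativity and must be carefully distinguished) are all sensitive to that choice. A secondary subtlety is that, in the fully general form \eqref{eq:gmp}--\eqref{eq:gmp'}, one must verify that the reduction $M=N=1$ used here is not restrictive — but this follows from taking products and is purely bookkeeping.
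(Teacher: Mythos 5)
The paper does not prove this proposition; both it and Proposition \ref{prop:weak} are stated as imports from \cite{barvinok2002convexity,tacchi2022convergence}, so there is no internal argument to compare yours against. Your overall strategy — weak duality plus Hahn--Banach separation plus Banach--Alaoglu compactness, reducing to $M=N=1$ as bookkeeping — is the standard machinery, and you correctly guessed that the first Condition is a dual Slater condition. However, two parts of your sketch do not line up with what is actually stated.

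First, the second Condition is not a closed-image/attainment statement. It is a primal compactness bound: there exists $B>0$ such that every feasible $\bmu$ satisfies $\int 1\,\od\mu_i \leq B$ for all $i$. This is genuinely stronger than closedness of the image near the optimum; it is what makes the feasible set weak-$*$ compact (bounded in total variation and weak-$*$ closed, then Banach--Alaoglu), which is the hypothesis the classical no-gap theorem actually needs. Your proposed step ``closed-image LPs have no gap whenever the primal value is finite and attained'' is not a correct theorem in infinite dimensions — primal attainment alone does not preclude a gap — so without compactness your route for case (ii) does not close.

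Second, in your Slater argument you separate $(T,\gmp)$ from ``the closed convex image of the primal cone,'' but closedness of $\{(\cA\mu,\int g\,\od\mu) : \mu \in \cM(\bX)_+\}$ is exactly what is \emph{not} available in general — if it were, you would not need Slater at all, and the separation as you describe it would only land you back at weak duality. The standard Slater proof (as in \cite[IV]{barvinok2002convexity}) separates a different set, namely the epigraph of the perturbation/value function (or the set of achievable constraint perturbations paired with attainable objective values), and uses the interior point $\overset{\circ}{\vw}$ only to force the last coordinate of the separating functional to be nonzero; no closedness of the image is invoked. The repair is routine but the set you chose to separate makes the argument circular as written.
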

\begin{cond}[Slater {\cite{slater1950lagrange}}] \label{condition:slater}
    $\exists \overset{\circ}{w} \in \cY \ \st \ \cA\,\overset{\circ}{w} - g \in \cC(\bX)_\oplus$.
\end{cond}
\begin{cond}[Measure compactness {\cite{tacchi2021thesis}}] \label{condition:tacchi}
    $\exists B > 0 \ \st \ \forall \mu \in \cX'$ feasible for the generalized moment problem in~\eqref{eq:gmp}, one has $\displaystyle \int 1 \; \od\mu \leq B $.
\end{cond}

{\bf Condition~\ref{condition:slater}} is instrumental in numerically constructing approximate solutions of~\eqref{eq:smp}, while {\bf Condition \ref{condition:tacchi}} is used in~\cite{tacchi2022convergence} to prove a strong convergence result on the numerical approximation of~\eqref{eq:gmp}. Ideally, one would like to deduce both conditions from one, stronger condition.

\begin{lem}\label{lem:StricPosImageA}
    If there exists $\hat{w} \in \cY$ with $\cA\, \hat{w} > 0$ on $\bX$ then {\bf Condition~\ref{condition:slater}} and {\bf Condition~\ref{condition:tacchi}} are satisfied.
\end{lem}

\begin{proof}
    We first verify that {\bf Condition~\ref{condition:slater}} is verified. By compactness of $\bX$ there exists $\delta > 0$ with $\cA\, \hat{w} \geq \delta$ on $\bX$. Setting $\overset{\circ}{w}:= \delta^{-1}\cdot(1 + \max\limits_{\vx\in \bX}g(\vx)) \cdot \hat{w}$ we have on $\bX$
    \begin{equation*}
        \cA \, \overset{\circ}{w} - g = \delta^{-1}\cdot(1 + \max\limits_{\vx\in \bX}g(\vx)) \, \cA \, \hat{w} - g \geq 1 + \max\limits_{\vx\in \bX}g(\vx) - g \geq 1 > 0.
    \end{equation*}
    This shows Slater's {\bf Condition~\ref{condition:slater}} is satisfied. To see that {\bf Condition~\ref{condition:tacchi}} is satisfied, let $\mu$ be feasible for \eqref{eq:gmp}. By {the} definition of $\delta$, one gets
    \begin{align*}
         \int 1 \; \od\mu & \leq \delta^{-1} \int \cA\,\hat{w} \; \od\mu = \delta^{-1} \langle \cA' \mu , \hat{w} \rangle = \delta^{-1} \langle T , \hat{w} \rangle =: B < \infty
    \end{align*}
    which is exactly {\bf Condition \ref{condition:tacchi}}.
\end{proof}

As a consequence of {\bf Lemma~\ref{lem:StricPosImageA}}, Slater's {\bf Condition~\ref{condition:slater}} implies {\bf Condition \ref{condition:tacchi}} when $g\in \cP(\bX)_+$. The reason is that $g \geq 0$ together with Slater's {\bf Condition~\ref{condition:slater}} implies $\cA \, \overset{\circ}{w} > g \geq 0$, i.e. the condition in {\bf Lemma~\ref{lem:StricPosImageA}}.

\subsection{The moment-SoS hierarchy} \label{subsec:hierarchy}
In this text, we {apply the so-called moment-SoS hierarchy for solving the functional LP~\eqref{eq:smp} and its dual, the GMP~\eqref{eq:gmp}. This follows an established line of reasoning based on so-called SoS tightenings respectively moment relaxations and semidefinite programming. More precisely, } it consists of the following steps.

\begin{subequations}
{\bf Step 1:} First, we make use of {\bf Assumption \ref{asm:existenceSol}} and {\bf Assumption \ref{asm:momop}}, and we assume that {\bf Condition \ref{condition:slater}} holds. We look for a minimizing sequence of \textit{strictly feasible} polynomials $w$ {for the LP~\eqref{eq:smp}}. Without changing the optimal value, we get:

\begin{equation} \label{eq:pos}
\begin{array}{lcl}
    \gmd = & \inf\limits_{w \in \R[\vy]} & \langle T ,  w \rangle \\
& \st & \cA \, w - g \in \cP(\bX)_\oplus.
\end{array}
\end{equation}

{\bf Step 2:} By {\textbf{Assumption~\ref{asm:poprateOld}}}, using {\bf Theorem \ref{thm:psatz}}, we recast the positivity constraint of \eqref{eq:pos} as a quadratic module constraint: 

\begin{equation} \label{eq:sos}
\begin{array}{lcl}
\gmd = & \inf\limits_{w \in \R[\vy]} & \langle T ,  w \rangle \\
& \st & \cA \, w - g \in \cQ(\vh).
\end{array}
\end{equation}

{\bf Step 3:} Eventually, we \textit{project} our infinite-dimensional quadratic module onto the bounded degree quadratic module $\cQ_{\ell}(\vh)$ (see~\eqref{eq:QuadModule} for the definition), obtaining the following SoS programming problem:
\begin{equation} \label{eq:sosd}
\fcolorbox{red}{white}{$
\begin{array}{lcl}
\gmd_\ell := & \inf\limits_{w \in \R_{d_\ell}[\vy]} & \langle T ,  w \rangle \\
& \st & \cA \, w - g \in \cQ_\ell(\vh),
\end{array}
$}
\end{equation}

where {\bf Corollary \ref{cor:momop}} is used to bound the degree of $w$. Note that \eqref{eq:sosd} is a \textit{tigthening} of \eqref{eq:sos} in the sense that we replaced the feasible set with a strictly smaller one (even finite-dimensional): in general,
$$ \gmd_\ell > \gmd.$$
Moreover, as $\cQ(\vh) = \cup_{\ell \in \N} \cQ_\ell(\vh)$ and $\cQ_L(\vh) = \cup_{\ell \leq L} \cQ_\ell(\vh)$ are clear, one also has the following monotone convergence theorem for free:
$$ \gmd_\ell \underset{\ell \to \infty}{\searrow} \gmd. $$

\end{subequations}

\subsection{The general case}
In applications, typically a more general situation occurs in which the spaces $\cX$ and $\cY$ are product space
\begin{center}
\begin{tabular}{ll}
$\cX := \cC(\bX_1)\x\ldots\x\cC(\bX_N)$,  & $\cY := \cP(\bY_1)\x\ldots\x\cP(\bY_M)$.\\
$\cX' := \cM(\bX_1)\x\ldots\x\cM(\bX_N)$,  & $\cY' \:= \cP(\bY_1)'\x\ldots\x\cP(\bY_M)'$, \\
\end{tabular}
\end{center}
for $N,M \in \N^\star$, $n_1,\ldots,n_N,m_1,\ldots,m_M \in \N$ and sets $\bX_i \subset \R^{n_i}$, $\bY_j \subset \R^{m_j}$.

The notions, results, and strategies carry over to product spaces. This is why, for notational simplicity, we continue with the setting of $\cX = \cC(\bX)$ and $\cY = \cP(\bY)$. For the general LPs, notationally not much changes, only that now $\cX, \cX'$ and $\cY,\cY'$ are products and $\bT = (T_1,\ldots,T_M)$, $\vg = (g_1,\ldots,g_N)$, $\vw = (w_1,\ldots,w_M)$ and $\bmu = (\mu_1,\ldots,\mu_N)$ have several components:
\begin{equation} \label{eq:LPGeneral}
\fcolorbox{red}{white}{$
\begin{array}{lclclcl}
\gmd= & \inf\limits_{\vw \in \cY} & \sum\limits_{i=1}^N \langle T_i ,  w_i \rangle & \quad \text{ and } \quad \quad & \op = & \sup\limits_{\bmu \in \cX'} & \sum\limits_{i=1}^M \displaystyle\int g_i \; \od \mu_i\\
& \st & \forall i \in \lint N \rint & & & \st & \forall i \in \lint N \rint, \quad \mu_i \in \cM(\bX_i)_+ \vphantom{\sup\limits_{\mu \in \cM(\bX)} \displaystyle\int}\\
& & (\cA \, \vw - \vg)_i \in \cC(\bX_i)_+ & & & & \cA' \, \bmu = \bT
\end{array}$}
\end{equation}

For details and notational subtleties, we refer to {\bf Appendix~\ref{appendix:ProductSpaces}}.

\begin{rem}[Existing GMPs] \leavevmode
    
    The framework of GMPs covers a large class of problems, notably polynomial optimization \cite{lasserre2009moments}, but also the LPs from \cite{bertsimas2002relation,curmei2023shape,decastro2016super,fantuzzi2022verification, fantuzzi2020bounding, goluskin2020bounding, jones2021converse,korda2018convergence, korda2017convergence,laraki2012semidefinite, lasserre2009moments, miller2023bounding, mula2024moment, parrilo2000structured,schlosser2021converging,tacchi2023stokes}, to name only a few, can all be represented in the form~\eqref{eq:LPGeneral}.
\end{rem}

\section{Method for convergence rates computation} \label{sec:rates}

The aim of this section (and more generally of this article) is to design a method for computing the rate of the convergence of the optimal values {$\gmd_\ell$ in~\eqref{eq:sosd} towards the optimal value $\gmd$ of~\eqref{eq:smp}}. From particular examples, we derive a generic method for computing such convergence rate, depending on the solutions of the infinite-dimensional problem \eqref{eq:LPGeneral}.

Our strategy consists of the following steps:
\begin{enumerate}
    \item {\bf Construction of a suitable minimizing sequence of polynomials {for (\ref{eq:smp})}.} In this step, it is important to control -- quantitatively and simultaneously -- the degree of those polynomials, their feasibility, and the convergence of their cost toward the optimal value.
    \item {\bf Application of an effective version of Positivstellensätze.} In this step, explicit convergence rates are derived. They are based on the convergence rates for Positivstellensätze and the minimizing sequence from the previous step.
\end{enumerate}

There is an interplay between the two steps inherent to the choice of the minimizing sequence. We will see an adversarial behavior between, on the one hand, a good approximation of the optimal point via high-degree polynomials and, on the other hand, degree bounds in the SDP relaxations.

\begin{rem}[Focusing on the function LP]  \leavevmode

    We will focus on the functional LP~\eqref{eq:smp} and not on the GMP~\eqref{eq:gmp} simply because we will use the effective version of Putinar's Positivstellensatz \textbf{Theorem \ref{thm:OldBaldi}}, which is more adapted to the LP \eqref{eq:smp} than to~\eqref{eq:gmp}. However, under Slater's \textbf{Condition \ref{condition:slater}}, strong duality holds and the two problems are equivalent.
\end{rem}

\begin{rem}[Sparse and symmetric problems]\label{rem:SparseAndSymProblems} \leavevmode

    The number of variables in the SDP for the $\ell$-th level of the moment-SoS hierarchy grows combinatorial with $\ell \in \N$. Thus exploiting sparsity or symmetry, when present, is important in practice. For problems~\eqref{eq:smp} with symmetry, we can restrict to symmetric solutions~\cite[Proposition 4.1]{fantuzzi2020bounding} for which the symmetry in the SDPs can be exploited without loss of accuracy~\cite{riener2013exploiting}. Therefore, whenever the symmetry is compatible with strict feasibility, convergence rates translate from the full moment-SoS hierarchy to the symmetry-reduced one. Correlation-sparsity can be treated via \cite{korda2023convergence} {allowing the transfer of convergence rates}. For sparse dynamical systems the convergence rates can even be improved as long as the bounds in the effective version of Putinar's Positivstellensatz grow with increasing state dimension, see \cite{schlosser2020sparse}.
\end{rem}

\subsection{General method and function approximation} \label{sec:method}

In this section, we specify the procedure that we have indicated at the beginning of Section \ref{sec:rates}. Let $\epsilon > 0$. Supposing that~\eqref{eq:smp} has an optimal solution $w^\star\in \R[\vx]$ and noting $d := \deg(w^\star)$, we want to perturb it with some $\tilde{w} \in \R_d[\vx]$ such that $\hat{w} := w^\star + \tilde{w}$ satisfies $\langle T, \hat{w} \rangle \leq \gmd + \epsilon$ and $p := \cA \, \hat{w} - g \in \cP(\bX)_\oplus$, \textit{i.e.} $\hat{w}$ is \textit{strictly} feasible for \eqref{eq:smp} and thus feasible for \eqref{eq:sos} by {\bf Theorem \ref{thm:psatz}}. Then, {\bf Theorem \ref{thm:OldBaldi}} gives a lower bound on $\ell \in \N$ such that $\hat{w}$ is feasible for our SoS strenghtening \eqref{eq:sosd}, which will prove that $\gmd\leq \gmd_{\ell} \leq \gmd + \epsilon$.

Hence, the general process for computing the degree $\ell$ needed for a given $\epsilon > 0$ accuracy and the corresponding convergence rate is summarized as follows:

\begin{enumerate}
    \item[1.] Take a minimizer $w^\star$ of the LP \eqref{eq:smp}.
    \item[2.] If no minimizer exists, then take an approximate minimal point $w_\epsilon \in \cY$ with $\langle T, w_\epsilon\rangle \leq \gmd+ \nicefrac{\varepsilon}{2}$.
    \item[3.] Perturb the minimizer $w^\star$ (resp. $w_\epsilon$) into a strictly feasible polynomial $\hat{w}$ with $\langle T, \hat{w} \rangle \leq \gmd+ \varepsilon$.
    \item[4.] Apply (effective) Positivstellenstätze{, such as \textbf{Theorem \ref{thm:OldBaldi}},} to show that $\hat{w}$ is feasible for the SDP hierarchy at some level $\ell \in \N$.
    \item[5.] Relate the approximation error $\varepsilon$ and the hierarchy level $\ell$ to derive a convergence rate.
\end{enumerate}

{Steps 1-4 in the above procedure guarantee asymptotic convergence of the moment-SoS hierarchy. To obtain explicit convergence rates, the effective Positivstellensatz and regularity properties of the minimizer $w^\star$ play central roles. Their interplay is discussed in the following remark.}

\begin{rem}[Desirable properties of the minimizing sequence]
    A good choice of minimizing sequence arises from an interplay of two properties: fast convergence of the respective cost and compatibility with the effective Positivstellensatz. The former property determines how fast the minimizing sequence approaches the optimal value and the latter determines at which level of the SoS hierarchy the elements in the minimizing sequence are feasible for the SDPs~\eqref{eq:sosd}. Below we specify these properties a bit more and relate them to the optimal point: 
    \begin{enumerate}
        \item Convergence to the optimal cost: To achieve a good approximation of the optimal cost, the regularity of the optimal point $w^\star$ and the geometry of the cost $T$ should be leveraged.
        \item Compatibility with the effective Positivstellensatz: This typically includes moderate growth of the degree of the polynomials in the minimizing sequence, moderate growth in the supremum on the set of interest, and sufficient strict feasibility.
    \end{enumerate}
    Some of the stated properties of a good minimizing sequence are adversarial to each other. For instance, consider the situation where the optimal point is not strictly feasible. Then, a fast approximation of the optimal point by a minimizing sequence enforces some feasibility constraints to become tight rapidly, which negatively affects the effective degree bound in Putinar's Positivstellensatz.
\end{rem}

In the applications in Sections \ref{sec:ApplicationDynSys}, \ref{sec:vol} we will encounter situations where no polynomial minimizer exists but we know a non-polynomial ``candidate-minimizer'' $\Bar{w}$. Formally, we can relax the LP~\eqref{eq:smp} to include such ``candidate-minimizer''. We discuss such an extension of~\eqref{eq:smp} in the following Remark.

\begin{rem}[Relaxing~\eqref{eq:smp}]\label{rem:NoMinimizer}
    If~\eqref{eq:smp} does not have an optimal point, it can be helpful to relax~\eqref{eq:smp} into
    \begin{equation} \label{eq:gfp}
        \begin{array}{lcl}
            \kappa^\dagger = & \inf\limits_{w \in \hat{\cY}} & \langle \hat{T} ,  w \rangle \\
            & \st & \hat{\cA} \, w - g \in \hat{\cX}_+.
        \end{array}
    \end{equation}
    where $\hat{\cY} \supset \cY$ and $\hat{\cX} \supset \cX$ are 
    {vector spaces equipped with
    ``well-chosen'' topologies (preserving continuity of $\cA$ and $T$ and such that $\cY$ resp. $\cX$ is dense in $\hat{\cY}$ resp. $\hat{\cX}$)}, the set $\hat{\cX}_+ \subset \hat{\cX}$ is a cone of non-negative functions on $\bX$, and $\hat{T}:\hat{\cY} \rightarrow \R$ and $\hat{\cA}:\hat{\cY}\rightarrow \hat{\cX}$ are continuous linear extensions of $T$ resp. $\cA$. To our best knowledge, in many relevant applications, a good choice for the topology on $\cY$ often results in \eqref{eq:gfp} having an optimal solution $\bar{w} \in \hat{\cY}$ with $\langle \hat{T}, \bar{w} \rangle = \kappa^\dagger = \gmd$ with $\gmd$ from (\ref{eq:smp}). The steps 1.-5. from the beginning of this section can again be followed. Nevertheless, we want to emphasize that in this procedure, bounding $\ell$ using {\bf Theorem \ref{thm:OldBaldi}} can become much more delicate because we impose less regularity on the function $\bar{w} \in \hat{\cY}$. The relaxation~\eqref{eq:gfp} can be found in this text in the optimal control problem in \textbf{Section~\ref{sec:ocp}} where $\cY$ is extended to $\hat{\cY} := C^1(\bY)$, the exit location problem in \textbf{Section~\ref{sec:ExitLocation}} where $\cY$ is extended to $\hat{\cY} := C^2(\bY)$ (see \textbf{Remark~\ref{rem:ExtensionSDEExit}}), or the volume computation in \textbf{Section~\ref{sec:vol}} where $\cY$ {and $\cX$ are extended to spaces of integrable functions} (see \textbf{Remark~\ref{rem:ExtensionVol}}).
\end{rem}

When, as in~\eqref{eq:gfp}, extending the search space from $\cY$ to $\hat{\cY}$, we relax the problem, i.e. it holds $\kappa^\dagger \leq \gmd$ and the inequality can be strict (for example if the only feasible point is non-polynomial). In the following lemma, we investigate a situation in which the optimal value does not change by relaxing~\eqref{eq:smp} to~\eqref{eq:gfp}, i.e. that $\kappa^\dagger = \gmd$ holds. Once again, Slater's {\bf Condition~\ref{condition:slater}} will be helpful.

\begin{lem}\label{lem:d*=bard*}
    Let {$\hat{\cY}$ be a topological vector space} such that $\cY \subset \hat{\cY}$ {is dense}. Assume $\cA$ and $T$ have continuous extensions $\hat{\cA}:\hat{\cY}\rightarrow \cX$ and $\hat{T}:\hat{\cY}\rightarrow \R$. Then, under Slater's {\bf Condition~\ref{condition:slater}} for~\eqref{eq:smp}, it holds $\kappa^\dagger = \gmd$.
\end{lem}

\begin{proof} 
    Because~\eqref{eq:gfp} is a relaxation of~\eqref{eq:smp} it holds $\gmd \geq \kappa^\dagger$ and it remains to show $\gmd \leq \kappa^\dagger$. To do so, by {\bf Condition~\ref{condition:slater}} for~\eqref{eq:smp}, we can take $v\in \cY$ with $\cA \ v - g > 0$ on $\bX$. Let $w\in \hat{\cY}$ be an arbitrary feasible point for~\eqref{eq:gfp} and let $\varepsilon > 0$. For $t\in (0,1]$ consider the element $w_t:= w + t(v-w) \in \hat{\cY}$. We have
    \begin{equation*}
        \hat{\cA} \, w_t - g = (1-t) \underbrace{(\hat{\cA} \, w - g)}_{\geq 0} + t \underbrace{(\cA v - g)}_{> 0} =: \delta_t > 0,
    \end{equation*}
    i.e. $w_t$ is strictly feasible for~\eqref{eq:gfp}. 
    {By density of $\cY \subset \hat{\cY}$ and continuity of $\hat{\cA}$ and $\hat{T}$, there exists $z_t \in \cY$ with $|\hat{\cA} \,(z_t - w_t)| \leq \frac{\delta}{2}$ on $\bX$ and $|\langle \hat{T} ,  w_t -z_t \rangle| \leq \varepsilon$}. This means that {$\hat{\cA}z_t - g \geq \hat{\cA}w_t - g - \frac{\delta_t}{2} = \frac{\delta_t}{2} > 0$} i.e. $z_t$ is feasible for~\eqref{eq:smp}, and its objective value $\langle T ,  z_t \rangle$ differs at most $\varepsilon$ from $\langle \hat{T} ,  w_t\rangle$. Now, letting $t\rightarrow 0$, we observe that $w_t \rightarrow w$ and get
    \begin{equation*}
        \limsup\limits_{t\searrow 0} |\langle \hat{T} ,  w -z_t \rangle| \leq \limsup\limits_{t\searrow 0} |\langle \hat{T} ,  w -w_t \rangle| + |\langle \hat{T} ,  w_t -z_t \rangle|\leq 0 +\varepsilon.
    \end{equation*}
    This shows $\kappa^\dagger \geq \gmd - \varepsilon$. Because $\varepsilon >0$ was arbitrary we conclude the statement.
\end{proof}

{
\begin{rem}
    In Section \ref{sec:VolumeStandard} we consider an LP whose relaxation~\eqref{eq:gfp} considers a space $\hat{\cX} \supsetneq \cX$, see {\bf Remark~\ref{rem:ExtensionVol}}. In such a situation the above \textbf{Lemma~\ref{lem:d*=bard*}} does not apply and we need another result to conclude. 
\end{rem}

\begin{lem}\label{lem:StandVolRelaxation}
    Let $\hat{\cX} \supset \cX$ and $\hat{\cY} \supset \cY$ be equipped with some topologies such that $\cA$ and $T$ have continuous extensions $\hat{\cA}: \hat{\cY} \longrightarrow \hat{\cX}$ and $\hat{T}: \hat{\cY} \longrightarrow \R$. Assume moreover that:
    \begin{enumerate}
        \item $\cY$ is one-sided dense in $\hat{\cY}$, i.e. $\forall w \in \hat{\cY}$, any open $\mathcal{N} \ni w$ contains a $\tilde{w}\in \cY$ such that $\tilde{w} \geq w$ on $\bY$
        \item $\hat{\cA}$ is positive, i.e. for $w\in \hat{\cY}$ with $w \geq 0$ on $\bY$, it holds $\hat{\cA}w \geq 0$ on $\bX$
    \end{enumerate}
\end{lem}

\begin{proof}
    We only need to show $\gmd \leq \kappa^\dagger$. Therefore, let $w\in \hat{\cY}$ be feasible for~\eqref{eq:gfp} with cost $c := \langle \hat{T},w\rangle$. Let $\varepsilon > 0$ and $\mathcal{N} := \hat{T}^{-1}((c-\varepsilon,c+\varepsilon))$ be an open neighbourhood of $w$. By the one-sided density of $\cY$ in $\hat{\cY}$ there exists $\tilde{w} \in \cY \cap \mathcal{N}$ with $\tilde{w} \geq w$ on $\bY$. By positivity of $\hat{\cA}$ it follows that $\tilde{w}$ is feasible for~\eqref{eq:smp} with cost $\langle T,\tilde{w}\rangle = \langle \hat{T},\tilde{w}\rangle \leq \langle \hat{T},w\rangle + \varepsilon$. Taking the infimum over $\tilde{w}$ and letting $\varepsilon$ to $0$ yields $\gmd \leq \langle \hat{T},w\rangle$. Minimizing w.r.t. $w$ concludes the proof.
\end{proof}
}

Also for the relaxed LP~\eqref{eq:gfp} we aim at approximating a minimizer $\bar{w}\in \hat{\cY}$ of~\eqref{eq:gfp} by suitable polynomials $w_\varepsilon \in \cY$ as in Step 2. from the beginning of this section. The convergence rate is then obtained by combining {\bf Theorem \ref{thm:OldBaldi}} with approximation theorems on upper bounds on the degree $d$ required for $w_\epsilon \in \R_d[\vy]$ to approximate $\Bar{w}$. Such approximation results are treated in the following section. 

\subsection{Polynomial approximation (finding $w_\varepsilon$)} \label{Subsec:PolyApproximation}

This section introduces polynomial approximation results that depend on the regularity of the function to be approximated. For $\bY \subset \R^m$ open or compact with non-empty interior, we need to define the vector spaces $C_b^k(\bY), k\in \N$ by induction: $C^0_b(\bY) := \cC_b(\bY) := \left\{f \in \cC(\bY) \; ; f \text{ is bounded on } \bY\right\}$ and 
$$C_b^{k+1}(\bY) := \left\{f \in \cC_b(\bY) \; : \forall j \in \lint m \rint, \, \frac{\partial f}{\partial y_j} \in C_b^k(\bY)\right\}.$$
These vector spaces are equipped with the norms $\|f\|_{C^0_b(\bY)} := \|f\|^\bY_\infty = \sup_\bY |f|$ and, again by induction, 
$$\|f\|_{C^{k+1}_b(\bY)} := \|f\|^\bY_\infty + \sum_{j=1}^n \left\|\frac{\partial f}{\partial y_j}\right\|_{C^k_b(\bY)}.$$
When $\bY$ is compact, the subscript $b$ is omitted as continuous functions are bounded on compact sets.

An important object is the modulus of continuity $\omega_{f,k}(\vy,\rho)$ of a function $f \in C_b^k(\bY)$ of order $k$ at a point $\vy \in \bY \subset \R^n$ for the radius $\rho >0$, defined as
\begin{subequations} \label{eq:ModConti}
\begin{equation}\label{eq:ModContiX}
    \omega_{f,k}(\vy,\rho) := \sup\limits_{\balpha \in \N^n_k} \left( \sup\limits_{\|\vy-\vy'\| \leq \rho} \left|\partial_{\balpha} f(\vy) - \partial_{\balpha} f(\vy')\right|\right)
\end{equation}
$$\text{where} \qquad \partial_{\balpha}f = \frac{\partial^{\alpha_1}}{\partial y_1^{\alpha_1}} \cdots \frac{\partial^{\alpha_m}}{\partial y_m^{\alpha_m}} f, $$
as well as the global modulus of continuity
\begin{equation}\label{eq:ModContiGlobal}
\omega^{L^\infty}_{f,k}(\bY,\rho) := \|\omega_{f,k}(\cdot,\rho)\|_\infty^\bY = \sup\limits_{\vy \in \bY} \omega_{f,k}(\vy,\rho) \leq \infty
\end{equation}
and, for $\mu \in \cM(\bY)_+$ and $s \geq 1$, the $L^s(\mu)$-averaged modulus of continuity
\begin{eqnarray}\label{eq:ModContiAverage}
    \omega^{L^s}_{f,k}(\mu,\rho) := \|\omega_{f,k}(\cdot,\rho)\|_{L^s(\mu)} := \left(\int \omega_{f,k}(\cdot,\rho)^s \; d\mu\right)^{\nicefrac{1}{s}}
\end{eqnarray}
\end{subequations}

Notice that $\bY \subset \bY' \Longrightarrow \omega_{f,k}^{L^{\infty}}(\bY,\cdot) \leq \omega_{f,k}^{L^{\infty}}(\bY',\cdot)$ and $\mu_1 - \mu_2 \in \cM(\bY)_+ \Longrightarrow \omega_{f,k}^{L^s}(\mu_1,\cdot) \geq \omega_{f,k}^{L^s}(\mu_2,\cdot)$. With the notion of modulus of continuity we can state the following theorem from \cite{bagby2002multivariate} concerning convergence speed for polynomial approximation of regular functions.

\begin{subequations} \label{eq:stone}
\begin{thm}[An extended Jackson inequality \cite{bagby2002multivariate}]\label{thm:CkApprox} \leavevmode

Let $\bY \subset \R^n$ be open and bounded, $f \in C^k_b(\bY)$. For $d \in \N$ there is a polynomial $p_d \in \R_d[\vy]$ such that for each $\balpha \in \N^n$ with $|\balpha| \leq \min (k, d)$ we have
\begin{equation} \label{eq:CkbApprox}
    \left\| \partial_{\balpha} (f - p_d)\right\|^\bY_\infty \leq \frac{c}{d^{k-|\balpha|}} \, \omega_{f,k}^{L^\infty} \left(\bY,\nicefrac{1}{d}\right).
\end{equation}
where $c$ is a positive constant depending only on $n, k$ and $\bY$.
\end{thm}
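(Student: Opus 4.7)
The plan is to reduce the multivariate polynomial approximation problem on $\bY$ to a standard approximation problem on a cube, and then to exploit a carefully chosen polynomial smoothing kernel whose degree controls the approximation rate.

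First, I would invoke a Whitney-type extension theorem to extend $f \in C^k_b(\bY)$ to a function $\tilde f \in C^k_b(Q)$, where $Q = [-R,R]^n \supset \bY$ is a cube chosen large enough to contain a neighbourhood of $\bY$. The extension can be arranged so that $\|\tilde f\|_{C^k(Q)} \leq C \|f\|_{C^k_b(\bY)}$ and so that the modulus of continuity is preserved up to a multiplicative constant, $\omega_{\tilde f,k}^{L^\infty}(Q,\rho) \leq C\,\omega_{f,k}^{L^\infty}(\bY,\rho)$, with $C$ depending only on $n$, $k$ and $\bY$ (the dependence on $\bY$ arises from the regularity of its boundary, but for generic open bounded $\bY$ one may always shrink to a compact subset with smooth boundary and use classical Stein extension).

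Second, I would construct $p_d$ by convolving $\tilde f$ with a polynomial kernel $K_d$ of the Jackson/de la Vallée Poussin type. Choosing $K_d$ as (a rescaling of) a tensor product of one-dimensional Jackson kernels, properly truncated so that $\mathrm{deg}\, p_d \leq d$, one obtains a kernel with three essential properties: (i) $\int K_d = 1$, (ii) the moments of $K_d$ up to order $k$ vanish (so that $K_d$ reproduces polynomials of degree $\leq k$), and (iii) $\int |\vy-\vy'|^j\,|K_d(\vy-\vy')|\,d\vy' \leq C_j\,d^{-j}$ for $j \leq k$. Because differentiation commutes with convolution, $\partial_\balpha p_d = \partial_\balpha \tilde f \ast K_d$ for $|\balpha| \leq d$.

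Third, to bound $\partial_\balpha(f-p_d)$ at a point $\vy \in \bY$, I would subtract a Taylor polynomial of $\partial_\balpha \tilde f$ of order $k-|\balpha|$ around $\vy$; by the moment-vanishing property of $K_d$ this Taylor polynomial is reproduced exactly, leaving
\[
\partial_\balpha(f-p_d)(\vy) = \int \bigl(\partial_\balpha \tilde f(\vy') - T_{k-|\balpha|}[\partial_\balpha \tilde f](\vy';\vy)\bigr)\,K_d(\vy-\vy')\,d\vy',
\]
and the Taylor remainder is controlled pointwise by $|\vy-\vy'|^{k-|\balpha|}\,\omega_{\tilde f,k}(\vy,|\vy-\vy'|)$. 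Splitting the integral into the region $|\vy-\vy'|\leq 1/d$ and its complement, and using the monotonicity estimate $\omega_{\tilde f,k}(\vy,t) \leq (1+td)\,\omega_{\tilde f,k}(\vy,1/d)$ together with property (iii) of the kernel, yields the desired bound $C d^{-(k-|\balpha|)}\omega_{f,k}^{L^\infty}(\bY,1/d)$.

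The main obstacle in this proof is the careful design of the kernel $K_d$: it must simultaneously be a multivariate algebraic polynomial of total degree bounded by $d$, have sufficiently many vanishing moments, and admit the decay estimate (iii). The extension step and the Taylor argument are standard once the kernel is available; constructing such a kernel in the multivariate setting (and keeping track of the dependence of constants on $n$, $k$ and the geometry of $\bY$) is the genuinely technical part carried out in \cite{bagby2002multivariate}.
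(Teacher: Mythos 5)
The paper does not supply a proof of \textbf{Theorem~\ref{thm:CkApprox}}: it is taken verbatim (modulo minor reformulation) from Bagby, Bos and Levenberg \cite{bagby2002multivariate}, and they in turn prove it by exactly the strategy you outline — extension of $f$ to a box, convolution with a degree-$d$ algebraic kernel that (after the Chebyshev change of variables) has width $\cO(1/d)$ and a controlled number of vanishing moments, and a Taylor-remainder argument using the subadditivity of the modulus of continuity. So your sketch is a faithful reconstruction of the reference's route rather than an alternative one. Two small remarks worth keeping in mind if you were to flesh this out. First, the moment/decay property you call (iii) is needed for $j \le k - |\balpha| + 1$, i.e.\ up to order $k+1$, not merely $k$: in the split integral the extra factor $d\,|\vy-\vy'|$ coming from $\omega(\vy,t) \le (1+td)\,\omega(\vy,\nicefrac{1}{d})$ raises the required power by one. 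Second, a literal polynomial of total degree $d$ is not integrable on $\R^n$, so the convolution must be understood as a finite integral over the fixed box $Q$ (or, equivalently, as a trigonometric convolution under $x_i = \cos\theta_i$); the Landau-type kernels $c_d(1-\|\vt\|^2)^{d/2}$ that naively look polynomial only give width $\cO(1/\sqrt d)$, so the Chebyshev-substituted Jackson-type kernel is genuinely necessary to reach the $1/d$ rate. You flag the kernel construction as the technical heart and defer it to the reference, which is appropriate; just be aware that it is precisely at that point that the difference between $1/\sqrt d$ and $1/d$ is decided.
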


\begin{cor}[Approximating smooth functions] \label{cor:CkApprox} \leavevmode

    Let $\bY \subset \R^n$ be open and bounded, $f \in C^k(\overline{\bY})$. For $d \geq k$ there is a polynomial $p_d \in \R_d[\vy]$ such that
    \begin{equation} \label{eq:CkApprox}
        \|f-p_d\|_{C^k(\overline{\bY})} \leq c_0 \, \omega_{f,k}^{L^\infty}\left(\overline{\bY},\nicefrac{1}{d}\right)
    \end{equation}
where $c_0$ is a positive constant depending only on $n, k$ and $\overline{\bY}$.
\end{cor}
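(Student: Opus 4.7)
The plan is to apply Theorem~\ref{thm:CkApprox} componentwise to all derivatives of order at most $k$ and then aggregate the resulting estimates according to the inductive definition of the $C^k$-norm.

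Since $d \geq k$, Theorem~\ref{thm:CkApprox} yields a polynomial $p_d \in \R_d[\vy]$ satisfying
$$\|\partial_\balpha(f-p_d)\|_\infty^{\bY} \leq \frac{c}{d^{k-|\balpha|}}\, \omega_{f,k}^{L^\infty}(\bY, \nicefrac{1}{d})$$
for every multi-index with $|\balpha| \leq k$. Because $f \in C^k(\overline{\bY})$, all involved derivatives are continuous on the compact closure, so the sup norms and moduli of continuity are unchanged when $\bY$ is replaced by $\overline{\bY}$.

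Next, I would unroll the recursion $\|g\|_{C^{j+1}_b(\bY)} = \|g\|_\infty^\bY + \sum_{i=1}^n \|\partial_i g\|_{C^j_b(\bY)}$. A straightforward induction (each step replays the sum over $i$, which counts ordered decompositions of a given multi-index into standard basis vectors) shows that the weight assigned to $\|\partial_\balpha g\|_\infty$ in the expansion of $\|g\|_{C^k_b}$ is the multinomial coefficient $|\balpha|!/(\alpha_1!\cdots\alpha_n!)$. Applied to $g = f - p_d$, combined with the derivative bounds above, and using the identity $\sum_{|\balpha|=j} j!/\alpha! = n^j$, this reduces the $C^k$-norm estimate to controlling the scalar sum $n^k \sum_{l=0}^k (nd)^{-l}$ times $c\, \omega_{f,k}^{L^\infty}(\overline{\bY}, \nicefrac{1}{d})$ (with the substitution $l = k-j$).

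To conclude, the factor $n^k$ is absorbed into a new constant still depending only on $n$, $k$ and $\overline{\bY}$. Applying the elementary inequality $\sum_{l=0}^k x^l \leq (1+x)^k$ (valid since $\binom{k}{l}\geq 1$) with $x = \nicefrac{1}{(nd)} \leq \nicefrac{1}{d}$ yields the first bound in~\eqref{eq:CkApprox}. For the second bound, the assumption $d \geq k$ together with the classical inequality $(1+\nicefrac{1}{d})^d \leq \e$ gives $(1+\nicefrac{1}{d})^k \leq \e$, whence $c_0 = \e\cdot c$. The whole argument is essentially combinatorial bookkeeping around Theorem~\ref{thm:CkApprox}; I expect no substantial obstacle beyond carefully identifying the multinomial weights in the definition of the $C^k$-norm.
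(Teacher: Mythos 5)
Your proof is correct and follows the paper's own route (apply Theorem~\ref{thm:CkApprox}, then unroll the recursive definition of the $C^k$ norm and sum the resulting derivative estimates). You are in fact more careful than the paper's proof, which writes the expansion of $\|f-p_d\|_{C^k(\overline{\bY})}$ with unit weights and then asserts $\sum_{|\balpha|\le k} d^{|\balpha|} = (1+d)^k$ (false already for $n=1$, $k\geq 2$, and more so for $n>1$, where the multinomial weights $|\balpha|!/\alpha!$ enter); your explicit bookkeeping of these weights, producing the extra $n^k$ factor and absorbing it into the constant $c$, is the clean version of the same argument.
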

\begin{proof} Consider the polynomial $p_d$ given by {\bf Theorem \ref{thm:CkApprox}} and $c$ the corresponding constant. Then, one has

    $$\begin{array}{lll}
        \|f-p_d\|_{C^k(\overline{\bY})} & = \sum_{|\balpha| \leq k} \|\partial_{\balpha}(f-p_d)\|_\infty^\bY \vphantom{\displaystyle\int} \\
        & \stackrel{\eqref{eq:CkbApprox}}{\leq} \sum_{|\balpha| \leq k} \frac{c}{d^{k-|\balpha|}} \, \omega_{f,k}^{L^\infty} \left(\bY,\nicefrac{1}{d}\right) \\ & = \frac{c}{d^k} \, \omega_{f,k}^{L^\infty} \left(\bY,\nicefrac{1}{d}\right) \sum_{|\balpha|\leq k} d^{|\balpha|} \vphantom{\displaystyle\int} \\
        & \leq \frac{c}{d^k} \, \omega_{f,k}^{L^\infty} \left(\bY,\nicefrac{1}{d}\right) \, d^k \binom{n+k}{k}\vphantom{\displaystyle\int} \\
        & = \binom{n+k}{k}\  c\ \omega_{f,k}^{L^\infty} \left(\bY,\nicefrac{1}{d}\right)
    \end{array}$$
    where we have used $\sum_{|\balpha|\leq k} d^{|\balpha|} \leq d^k \sum_{|\balpha|\leq k} 1 = d^k \binom{n+k}{k}$. Choosing the constant $c_0 = c_0(n,k,\overline{\bY}) := \binom{n+k}{k}\  c$ concludes the claim.
\end{proof}

With a view toward the problem of computing the volume of a semialgebraic set from Section~\ref{sec:vol}, the following one-sided approximation result is useful.

\begin{thm}[One-sided polynomial approximation \cite{bhaya2010interpolating}] \label{thm:onesideApprox} \leavevmode

    Let $\bY \subset [-1,1]^m$, $\lambda_\bY$ be the Lebesgue measure on $\bY$ and $f : \bY \longrightarrow \R$ be bounded and measurable.

    For all $s \in [1,\infty)$ and $d \in \N$ there is a polynomial $p_{d} \in \R_d[\vy]$ such that $p_{d} \geq f$ on $\bY$ and
    \begin{equation} \label{eq:onesideApprox}
        \int \left(p_{d} - f \vphantom{\sum}\right)^s \; \od\lambda_\bY \leq \overline{c} \, \omega^{L^s}_{f,0}\left(\lambda,\nicefrac{1}{d}\right)
    \end{equation}
    for some constant $\overline{c}$ depending only on $m$ and $s$.
    
    Moreover, for all $d \in \N$ there is a polynomial $p_d \in \R_d[\vy]$ such that $p_d \geq f$ on $\bY$ and
    \begin{equation} \label{eq:onesideUnifApprox}
        \lambda \left( \left\{\vy \in \bY \; ; \; p_d(\vy) > f(\vy) + \hat{c} \, \omega_{f,0}^{L^\infty}(\bY,\nicefrac{1}{d}) \right\}\right) = 0
    \end{equation}
    for some constant $\hat{c}$ depending only on $m$.
\end{thm}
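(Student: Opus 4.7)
The plan is to construct $p_d$ in three stages: (i) replace $f$ with a measurable upper envelope at scale $\rho := 1/d$, (ii) smooth this envelope by a nonnegative polynomial kernel of degree $\leq d$, and (iii) shift the resulting polynomial upward by a quantity tailored to the estimate being sought, so that the pointwise domination $p_d \geq f$ on $\bY$ is enforced while the approximation error remains controlled.

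For step (i), set
$$\tilde f(\vy) := \sup\{f(\vy') : \vy' \in \bY,\ \|\vy-\vy'\|\leq \rho\},$$
so that $\tilde f \geq f$ on $\bY$ and $\tilde f(\vy) - f(\vy) \leq \omega_{f,0}(\vy,\rho)$ by definition of the modulus. For step (ii), extend $\tilde f$ to a bounded function on $[-1,1]^n$ (e.g.\ by its essential supremum) and convolve with a nonnegative polynomial kernel $K_d \in \R_d[\vy]$ of unit mass concentrated near the origin at scale $\rho$ --- in one variable a Jackson--Matsuoka kernel, in several variables a tensor product thereof. This produces $q_d := \tilde f \ast K_d \in \R_d[\vy]$. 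For step (iii), setting $p_d := q_d + C\,\omega^{L^\infty}_{f,0}(\bY,\rho)$ with $C$ depending only on $n$ enforces $p_d \geq f$ on $\bY$ and yields the uniform a.e.\ bound \eqref{eq:onesideUnifApprox}; for \eqref{eq:onesideApprox} a finer shift adapted to the local modulus is used.

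The $L^s$ estimate \eqref{eq:onesideApprox} then follows from positivity of the kernel and Jensen's inequality:
$$|q_d(\vy) - \tilde f(\vy)|^s \leq \int K_d(\vy-\vy')\,|\tilde f(\vy') - \tilde f(\vy)|^s\,\od\vy',$$
and, when $\|\vy - \vy'\| \leq \rho$, the integrand is dominated, up to a constant, by $\omega_{f,0}(\vy,\rho)^s + \omega_{f,0}(\vy',\rho)^s$. Integrating against $\lambda_\bY$ and swapping the order of integration via Fubini yields a bound by a constant multiple of $\int \omega_{f,0}(\cdot,\rho)^s\,\od\lambda$, as announced. The uniform a.e.\ bound \eqref{eq:onesideUnifApprox} is immediate after taking essential suprema and discarding the $\lambda$-null discontinuity set of the upper envelope $\tilde f$.

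The main obstacle will be the construction, in several variables, of a nonnegative polynomial kernel $K_d$ of degree exactly $\leq d$, concentrated at scale $1/d$, whose low-order moments are small enough to give the announced rate with constants $\overline c, \hat c$ depending only on $n$ and $s$. In one variable the Jackson--Matsuoka kernels and their powers are classical; in several variables one either tensorises (losing some sharpness) or relies on spherical-polynomial constructions \emph{\`a la} Sendov. The bookkeeping of these dimension-dependent constants, combined with the boundary effects from extending $\tilde f$ beyond $\bY$ into $[-1,1]^n$, is where the reference \cite{bhaya2010interpolating} does most of the technical work.
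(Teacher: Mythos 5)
This statement is not proved in the paper: it is quoted as a black-box result from the cited reference \cite{bhaya2010interpolating}, so there is no ``paper's proof'' to compare against. Assessing your proposal on its own, the part aimed at the a.e.\ bound \eqref{eq:onesideUnifApprox} is plausible: the three-stage plan (upper envelope at scale $\rho=1/d$, convolution with a nonnegative polynomial Jackson-type kernel of degree $\leq d$, constant upward shift of size $\cO\bigl(\omega_{f,0}^{L^\infty}(\bY,\rho)\bigr)$) is a standard route, modulo the acknowledged bookkeeping of kernel tails and boundary extension.

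The genuine gap is in the $L^s$ bound \eqref{eq:onesideApprox}, and it is hidden in the sentence ``for \eqref{eq:onesideApprox} a finer shift adapted to the local modulus is used.'' A shift by the local quantity $\omega_{f,0}(\vy,\rho)$ would take you out of $\R_d[\vy]$: that function is not a polynomial, and for merely bounded measurable $f$ it need not even be continuous. To repair this you would need to one-sidedly approximate $\omega_{f,0}(\cdot,\rho)$ itself by a polynomial of degree $\leq d$ with comparable $L^s$ norm --- which is a problem of the same kind you are trying to solve, so the argument as stated is circular. If you instead revert to the uniform shift by $\omega_{f,0}^{L^\infty}(\bY,\rho)$, the $L^s$ estimate collapses for discontinuous $f$ (for instance $f = \ind_\bX$, the case actually used later in the paper), because $\omega_{f,0}^{L^\infty}(\bY,\rho) \asymp 1$ while $\omega_{f,0}^{L^s}(\lambda,\rho)$ can be arbitrarily small. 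The actual one-sided $L^s$ theory does not proceed by a global polynomial shift: it works through local (piecewise polynomial / spline) upper envelopes on a grid of mesh $\asymp 1/d$, then converts these to a single global polynomial using nonnegative polynomial partitions of unity and careful tail control, so that the pointwise excess $p_d - f$ is controlled locally by the local modulus rather than by a single global number. That conversion step is the heart of the theorem and is missing from your sketch.

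A secondary, smaller point: in the chain
$$|q_d(\vy)-\tilde f(\vy)|^s \leq \int K_d(\vy-\vy')\,|\tilde f(\vy')-\tilde f(\vy)|^s\,\od\vy',$$
when $\|\vy-\vy'\|\leq\rho$ you can only bound $|\tilde f(\vy')-\tilde f(\vy)|$ by a modulus at scale $2\rho$ (the two $\rho$-balls are not nested); this is harmless after renormalizing the kernel scale, but it should be stated, since for general measurable $f$ the local modulus $\omega_{f,0}(\vy,\cdot)$ enjoys no a priori doubling property.
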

\end{subequations}

\subsection{Inward-pointing condition (finding $\tilde{w}$)}\label{sec:InwardPointingCond}

In this section, we complement polynomial approximations from the previous section with conditions that assure feasibility for those approximations. We will see in {\bf Lemma \ref{cor:InwStricFeas}} that the following condition is sufficient for guaranteeing the existence of a minimizing sequence of strictly feasible polynomials.

\begin{cond}[Inward-pointing condition]\label{condition:Inpointing} \leavevmode

    We say the LP~\eqref{eq:smp} satisfies the inward-pointing condition if for each feasible point $w$ for \eqref{eq:smp} there exists $\phi \in \cY$ such that
    \begin{equation}
        \cA (w+\theta \phi) - g > 0 \quad \text{on } \bX
    \end{equation}
    for all $\theta \in (0,1]$.
\end{cond}

\begin{lem}\label{cor:InwStricFeas} 
    Under {\bf Condition \ref{condition:Inpointing}}, there exists a minimizing sequence of strictly feasible polynomials.
\end{lem}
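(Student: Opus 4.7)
The strategy is two-stage: first use the inward-pointing condition to extract from an arbitrary minimizing sequence a new minimizing sequence of \emph{strictly} feasible elements of $\overline{\cY}$; then invoke the density of polynomials in $\overline{\cY}$ to pass to $\cY$ while preserving both strict feasibility and approximate optimality.

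First, I would take a minimizing sequence $(w_n)_n \subset \overline{\cY}$ for \eqref{eq:gfp}, i.e.\ with $\langle \overline{T},w_n \rangle \to \od^\star_\mF$. By \textbf{Condition \ref{condition:Inpointing}}, each $w_n$ admits a perturbation direction $\phi_n \in \overline{\cY}$ such that $\overline{\cA'}(w_n+\theta\phi_n) - g > 0$ on $\bX$ for every $\theta \in [0,1]$. Picking $\theta_n \in (0,1]$ small enough that $\theta_n \, |\langle \overline{T}, \phi_n \rangle| \leq \nicefrac{1}{n}$ (using continuity of $\overline{T}$), the sequence $\tilde{w}_n := w_n + \theta_n \phi_n$ is still minimizing. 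Since $\overline{\cA'}\tilde{w}_n - g$ is continuous and strictly positive on the compact set $\bX$, it admits a positive lower bound $\delta_n := \min_{\bX}(\overline{\cA'}\tilde{w}_n - g) > 0$.

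Second, I would exploit the fact that the ``well-chosen'' topology on $\cY$ used to define $\overline{\cY}$ in \eqref{eq:gfp} is tailored so that $\cY$ is dense in $\overline{\cY}$ and both $\overline{T}$ and the map $\overline{\cY} \ni w \mapsto \overline{\cA'}w|_{\bX} \in \cC(\bX)$ are continuous. Hence, for each $n$, one can pick a polynomial $\hat{w}_n \in \cY$ satisfying both $|\langle T, \hat{w}_n \rangle - \langle \overline{T}, \tilde{w}_n \rangle| \leq \nicefrac{1}{n}$ and $\|\cA'\hat{w}_n - \overline{\cA'}\tilde{w}_n\|_\infty^\bX \leq \nicefrac{\delta_n}{2}$. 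The second bound yields $\cA'\hat{w}_n - g \geq \nicefrac{\delta_n}{2} > 0$ on $\bX$, so $\hat{w}_n$ is a strictly feasible polynomial, and the first bound ensures $\langle T, \hat{w}_n\rangle \to \od^\star_\mF$, so $(\hat{w}_n)_n$ is the sought minimizing sequence.

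The main obstacle is the coupling between the polynomial approximation accuracy and the positivity margin $\delta_n$, which may shrink to $0$ as $n \to \infty$: fixing $\tilde{w}_n$ first pins down $\delta_n$, and one then has to achieve an approximation finer than $\delta_n/2$ in the sup-norm over $\bX$. This is precisely what the compatibility between the ``well-chosen'' topology on $\overline{\cY}$ and the sup-norm on $\cC(\bX)$ (introduced alongside \eqref{eq:gfp} in Section~\ref{sec:method}) is designed to handle; once this compatibility is granted, a standard diagonal extraction concludes.
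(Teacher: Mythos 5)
Your proof is correct and follows essentially the same route as the paper: perturb an almost-minimizer of \eqref{eq:gfp} along the inward-pointing direction with a small enough $\theta$ to keep the cost close while gaining a uniform positivity margin $\delta_n > 0$ on the compact set $\bX$, then use density of $\cY$ in $\overline{\cY}$ together with continuity of $\overline{T}$ and $\overline{\cA'}$ to replace the perturbed element by a strictly feasible polynomial. The closing remark about a ``diagonal extraction'' is superfluous — your construction already produces the minimizing sequence directly, just as the paper's $\varepsilon \to 0$ argument does.
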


\begin{proof}
\begin{subequations}
    Let $\varepsilon > 0$ and $w_\varepsilon \in \cY$ be feasible with
    \begin{equation}\label{eq:weps}
        \langle T,w_\varepsilon\rangle < \od + \frac{\varepsilon}{2}.
    \end{equation}
    Let $\phi_\varepsilon$ be as in the inward-pointing {\bf Condition \ref{condition:Inpointing}}, such that $w_\varepsilon + \theta \phi_\varepsilon$ is strictly feasible for all $\theta \in (0,1]$. Let $\theta_\epsilon \in (0,1]$ be small enough such that
    \begin{equation}\label{eq:thetaeps}
        \left|\langle T,\theta _\epsilon\phi_\varepsilon\rangle \right| = \theta_\epsilon \left|\langle T,\phi_\varepsilon\rangle \right|< \frac{\varepsilon}{2}.
    \end{equation}
    We set $p_\varepsilon := w_\varepsilon + \theta_\varepsilon \phi_\varepsilon \in \cY$. Then $p_\varepsilon$ is strictly feasible and putting together \eqref{eq:weps} and \eqref{eq:thetaeps} gives
    \begin{eqnarray*}
        \langle T,p_\varepsilon\rangle \leq \langle  T,w_\varepsilon\rangle
         + \left|\langle T,\theta _\epsilon\phi_\varepsilon\rangle \right| + 
          < \od + \frac{\varepsilon}{2} + \frac{\varepsilon}{2} = \od + \epsilon. 
    \end{eqnarray*}
    Letting $\varepsilon$ go to zero shows the statement.
\end{subequations}
\end{proof}

Similar to {\bf Lemma~\ref{lem:StricPosImageA}}, a simpler version of {\bf Condition \ref{condition:Inpointing}} is that there exists $\phi \in \overline{\cY}$ with $\cA\phi > 0$.

\begin{lem} \label{lem:positivepointing}
    Assume there exists $\phi \in \cY$ with $\cA\phi > 0$. Then {\bf Condition \ref{condition:Inpointing}} is satisfied.
\end{lem}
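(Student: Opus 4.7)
The hypothesis $\overline{\cA}\phi > 0$ I read as $\overline{\cA'}\phi > 0$ on $\bX$, to be type-consistent with \textbf{Condition \ref{condition:Inpointing}} (which uses $\overline{\cA'}$ acting on elements of $\overline{\cY}$). The plan is then to use this very same $\phi$ as a universal ``inward direction'' for every feasible $w$, and to leverage compactness of $\bX$ to upgrade the pointwise strict positivity of $\overline{\cA'}\phi$ into a uniform lower bound on $\bX$.

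First, I would invoke the Weierstrass extreme value theorem: since $\overline{\cA'}\phi$ is continuous on the compact set $\bX$ and strictly positive there, there exists some $\delta > 0$ with $\overline{\cA'}\phi \geq \delta$ on $\bX$. Then, for any feasible $w$ for \eqref{eq:gfp} (so that $\overline{\cA'}w - g \geq 0$ on $\bX$), linearity of $\overline{\cA'}$ gives
$$\overline{\cA'}(w+\theta\phi) - g \;=\; \bigl(\overline{\cA'}w - g\bigr) + \theta\,\overline{\cA'}\phi \;\geq\; \theta\,\delta \quad \text{on } \bX,$$
for every $\theta \geq 0$. For $\theta > 0$ this lower bound is strictly positive, which is precisely the strict feasibility demanded by \textbf{Condition \ref{condition:Inpointing}}, with the same $\phi$ working uniformly in $w$.

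One small subtlety: \textbf{Condition \ref{condition:Inpointing}} as written asks for strict positivity over all $\theta \in [0,1]$, including $\theta = 0$. The bound $\theta\delta$ degenerates at $\theta = 0$, and requiring strict positivity there would force $w$ itself to be strictly feasible, which does not follow from mere feasibility. I read the range as $(0,1]$, consistent with how the proof of \textbf{Lemma \ref{cor:InwStricFeas}} selects an arbitrarily small $\theta_\epsilon > 0$. With this reading, the argument is essentially a one-liner and I do not anticipate any real obstacle; the only ingredients are the linearity of $\overline{\cA'}$ and compactness of $\bX$, both built into the standing hypotheses.
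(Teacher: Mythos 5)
Your proof is correct and follows the same route as the paper's: linearity of $\overline{\cA'}$ plus the observation that a nonnegative quantity plus a strictly positive quantity is strictly positive. The paper's version is a one-liner that skips your Weierstrass/compactness step, since strict positivity \emph{pointwise} on $\bX$ is all that the statement requires and that already follows from $\overline{\cA'}w - g \geq 0$ and $\theta\,\overline{\cA'}\phi > 0$ on $\bX$; the uniform lower bound $\delta$ becomes relevant only later, in the proof of \textbf{Lemma~\ref{cor:InwStricFeas}}. Your remark about the $\theta = 0$ endpoint is a fair catch: the paper's own proof also only verifies the condition for $\theta > 0$, and its proof of \textbf{Proposition~\ref{lem:Inpointing}} exhibits the same degeneracy at $\theta = 0$, so the intended reading of \textbf{Condition~\ref{condition:Inpointing}} is indeed $\theta \in (0,1]$.
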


\begin{proof}
    Let $w \in \cY$ be feasible. Then for all $\theta >0$ it holds $\cA(w + \theta \phi) - g = \cA(w) - g + \theta \overline{\cA}(\phi) > 0$ on $\bX$.
\end{proof}

The inward-pointing {\bf Condition \ref{condition:Inpointing}} is closely related to the Slater {\bf Condition \ref{condition:slater}}. We address this shortly in the following proposition.

\begin{prop}[Inward-pointing and Slater conditions] \label{lem:Inpointing} \leavevmode

The inward-pointing {\bf Condition \ref{condition:Inpointing}} and the Slater {\bf Condition \ref{condition:slater}} are equivalent.
\end{prop}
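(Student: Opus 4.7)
My plan is to prove the two implications separately. The Slater $\Rightarrow$ inward-pointing direction is a short convex-interpolation argument, while the reverse is a density-and-continuity argument that closely mirrors the one in the proof of {\bf Lemma \ref{cor:InwStricFeas}}.

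For Slater $\Rightarrow$ inward-pointing, I would start from a Slater point $\overset{\circ}{w} \in \cY$ satisfying $\cA' \overset{\circ}{w} - g \in \cC(\bX)_\oplus$. Fix any feasible $w \in \overline{\cY}$ of \eqref{eq:gfp} and set $\phi := \overset{\circ}{w} - w \in \overline{\cY}$. By linearity of $\overline{\cA'}$, for every $\theta \in (0,1]$,
$$\overline{\cA'}(w + \theta\phi) - g = (1-\theta)(\overline{\cA'} w - g) + \theta(\cA' \overset{\circ}{w} - g).$$
The first summand is nonnegative on $\bX$ by feasibility of $w$, while by compactness of $\bX$ and the Slater property the second summand is bounded below by $\theta \cdot \min_{\bX}(\cA' \overset{\circ}{w} - g) > 0$, yielding strict positivity of the whole expression on $\bX$, as required.

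For inward-pointing $\Rightarrow$ Slater, by {\bf Assumption \ref{asm:existenceSol}} I would pick any feasible $w \in \overline{\cY}$, and let $\phi \in \overline{\cY}$ be the direction supplied by the inward-pointing condition, so that $\hat{w} := w + \phi$ satisfies $\overline{\cA'}\hat{w} - g > 0$ on $\bX$ (taking $\theta = 1$). Compactness of $\bX$ and continuity of $\overline{\cA'}\hat{w} - g$ yield some $\rho > 0$ with $\overline{\cA'}\hat{w} - g \geq \rho$ on $\bX$.

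The main obstacle is that $\hat{w}$ need not lie in $\cY$, whereas Slater's {\bf Condition \ref{condition:slater}} demands a point of $\cY$. Here I would invoke the density of $\cY$ in $\overline{\cY}$ together with the continuity of $\overline{\cA'}: \overline{\cY} \to \cC(\bX)$ (which is built into the choice of the norm on $\cY$) to select $\overset{\circ}{w} \in \cY$ with $\|\cA'\overset{\circ}{w} - \overline{\cA'}\hat{w}\|_\infty^\bX < \rho/2$. This forces $\cA'\overset{\circ}{w} - g \geq \rho/2 > 0$ on $\bX$, so $\overset{\circ}{w}$ is a Slater point. This density-and-continuity step is exactly the one already used in the proof of {\bf Lemma \ref{cor:InwStricFeas}}, so it carries over verbatim in the present framework.
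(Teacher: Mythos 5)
Your proof is correct and follows the same overall structure as the paper's, but it is actually more careful on the inward-pointing $\Rightarrow$ Slater direction. The paper's proof stops at observing that the point $w+\varepsilon\phi\in\overline{\cY}$ is strictly feasible and ``lies in the interior of the feasible set,'' without explaining how to pass from this element of $\overline{\cY}$ to a Slater point $\overset{\circ}{w}\in\cY$ (Slater's {\bf Condition \ref{condition:slater}} explicitly asks for a polynomial). Your density-plus-continuity step---extracting $\overset{\circ}{w}\in\cY$ with $\|\cA'\overset{\circ}{w}-\overline{\cA'}\hat{w}\|_\infty^\bX<\rho/2$---is exactly the missing bridge, and you are right that it reuses the mechanism already present in the proof of {\bf Lemma \ref{cor:InwStricFeas}}. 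The Slater $\Rightarrow$ inward-pointing direction (via $\phi:=\overset{\circ}{w}-w$ and the convex-combination decomposition) matches the paper line for line, including the fact that the strict inequality only holds for $\theta>0$; the paper asserts it for all $\theta\in[0,1]$, which is false at $\theta=0$ when $w$ is merely feasible, so your explicit restriction to $\theta\in(0,1]$ is the more honest reading of what is actually proved.

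One small correction: you invoke {\bf Assumption \ref{asm:existenceSol}} to produce a feasible $w$ for \eqref{eq:gfp}, but that assumption guarantees feasibility of the primal measure problem \eqref{eq:gmp}, not of the dual function problem \eqref{eq:gfp}. Feasibility of \eqref{eq:gfp} is a separate tacit hypothesis that the paper's proof also makes (``for feasible $w$\ldots''); without it, the inward-pointing condition is vacuously true while Slater can still fail, so the equivalence as stated silently presupposes dual feasibility.
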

\begin{proof}
    If {\bf Condition \ref{condition:Inpointing}} is satisfied, then {\bf Condition \ref{condition:slater}} is satisfied. This follows immediately because for feasible $w \in \cA$ and $\varepsilon$ and $\phi$ as in {\bf Condition \ref{condition:Inpointing}} the point $w + \varepsilon \phi$ is strictly feasible, i.e. (by continuity of $\cA$) lies in the interior of the feasible set. On the other hand, if {\bf Condition \ref{condition:slater}} is satisfied with strictly feasible point $\overset{\circ}{w} \in \cY$ then, by convexity of the feasible set, {\bf Condition \ref{condition:Inpointing}} is satisfied. Indeed, for $w$ feasible let $\phi := \overset{\circ}{w} - w$; then for all $\theta \in (0,1]$ it holds
    \begin{equation*}
        \cA (w +\theta \phi) - g = \cA ((1-\theta)w +\theta \overset{\circ}{w}) - g = (1-\theta) \underbrace{(\cA w - g)}_{\geq 0} + \theta \underbrace{(\cA \overset{\circ}{w}- g)}_{> 0} >0.
    \end{equation*}
\end{proof}

\begin{rem}[On the relevance of {\bf Condition \ref{condition:Inpointing}}] \leavevmode

    Upon reading {\bf Proposition \ref{lem:Inpointing}}, one could wonder why the inward-pointing condition is important, as it is equivalent to the better-known Slater condition. The reason is that this condition {allows to ``quantify'' Slater's condition, in the sense that {by choosing $\theta$} we can control ``how positive'' the function $\cA(w + \theta \phi) - g$ gets.} This will be instrumental in the computation of the convergence rates. {In analogy to Putinar's Positivstellensatz}, the inward-pointing {\bf Condition \ref{condition:Inpointing}} is to Slater's {\bf Condition \ref{condition:slater}} what an effective Putinar Positivstellensatz is to the original Putinar Positivstellensatz {\bf Theorem \ref{thm:psatz}}. {Further, a practical advantage of the inward-pointing condition is that often it can be verified via {\bf Lemma \ref{lem:positivepointing}}.}
\end{rem}

\subsection{Obtaining the convergence rates}

Here, we put together the steps we discussed in this section. We consider the functional LP \eqref{eq:smp} and formulate the following (quantitative) conditions.

\begin{cond}\label{cond:GenMethod} \leavevmode
\end{cond}
\begin{enumerate}
    \item[\mylabel{cond:GenMethod1}{{\bf\ref*{cond:GenMethod}.1}}] Effective Putinar's Positivstellensatz: There is an effective degree bound $\ell_b(n,\vh,p_\bX^\star,\|p\|_\infty^{\bK},\deg(p))$ for Putinar's Positivstellensatz. That is, for $\bX = \bS (\vh) \subset \R^n$ and $p\in \cP(\bX)$, it holds
    \begin{equation}\label{eq:GenDegreeBound}
        p_\bX^\star := \min\{ p(\vx) \; ; \, \vx \in \bX\} > 0 \; \; \text{ and } \;\;\ell \geq \ell_b(n,\vh,p_\bX^\star,\|p\|_\infty^{\bK},\deg(p)) \quad \Longrightarrow p \in \cQ_\ell(\vh)
    \end{equation}
    where $\|p\|_\infty^{\bK} := \sup\limits_{\vx \in \bK} |p(\vx)|$ for a given set $\bK \supset \bX$.
    \item[\mylabel{cond:GenMethod2}{{\bf\ref*{cond:GenMethod}.2}}] Existence and regularity of minimizer: There exists a minimizer $w^\star$ of the functional LP \eqref{eq:smp} (or more generally for the LP~\eqref{eq:LPGeneral}).
    \item[\mylabel{cond:GenMethod3}{{\bf\ref*{cond:GenMethod}.3}}]  Quantitative inward-pointing condition: We have access to quantitative estimates of ``how much the inward-pointing direction is pointing inward''. That is, we can bound from below the function $\psi(\theta)$ given by
    \begin{equation*}
        \psi(\theta) := \inf\limits_{\vx \in \bX} \cA (w^\star+\theta \phi)(\vx) - g (\vx) > 0.
    \end{equation*}
\end{enumerate}

{\bf Condition \ref{cond:GenMethod1}} is formulated such that it is satisfied for the effective Positivstellensatz {\bf Theorem \ref{thm:OldBaldi}}. We will discuss properties of effective Positivstellensätze such as not including an ambient set $\bK \supset \bX$ in {\bf Remark~\ref{rem:AmbientSet}}.

{\bf Conditions \ref{cond:GenMethod2}} and {\bf \ref{cond:GenMethod3}} are specifically formulated to keep simultaneous control of $\deg(p)$, $p^\star_\bX$ and $\|p\|_\infty^\bX$, allowing for simple use of the effective Positivstellensatz. Hence, the ``only'' remaining difficulty lies in effectively verifying those two conditions. In Section~\ref{sec:Limitations}, we discuss possible relaxations of {\bf Conditions \ref{cond:GenMethod2}} to allow for minimizers that are not polynomial.

Examples of how the three concepts in {\bf Conditions \ref{cond:GenMethod}} work together for obtaining convergence rates are demonstrated in the following sections, in which we focus on optimal control problems, volume computation of semialgebraic sets and the central example of polynomial optimization.

\subsection{Example: polynomial optimization}

The Polynomial Optimization Problem (POP) is at the root of the development of the moment-SoS hierarchy. It will serve as a fundamental and motivating example for our convergence rates computation. It consists in \textit{globally} minimizing a polynomial $f \in \R[\vx]$ on a nonempty, compact basic semi-algebraic set $\varnothing \neq \bX := \bS(\vh) \subset \R^n$, where $\vh \in \R[\vx]^r$:
\begin{equation} \label{eq:pop}
\begin{array}{rl}
f^\star_\bX := \min\limits_{\vx \in \R^n} & f(\vx) \\
\st & \vh(\vx) \in \R_+^r.
\end{array}
\end{equation}

\noindent By definition of the minimum, it is straightforward that $f^\star_\bX = \max \{w \in \R ; f-w \geq 0 \text{ on } \bY\}$, which gives rise to the below functional LP formulation
\begin{equation}\label{eq:PopLP}
\begin{array}{lcl}
f^\star_\bX = & \sup\limits_{w \in \R} & w \\
& \st & f- w \in \cC(\bX)_+.
\end{array}
\end{equation}
This optimization problem is of the form~\eqref{eq:smp} after substituting the $\inf$ by $\sup$ for $T := 1$, $\bY := \{0\}$ a one-point set (i.e. $\cP(\bY)$ can be identified with $\R$), $\cA w(\vx) := -w$ and $g := -f$. Now, we perform, step-by-step, our method described in Section~\ref{sec:method}.
\begin{enumerate}
    \item[1.] Find optimal point $w^\star$: The optimal solution of~\eqref{eq:PopLP} given by $w^\star = f_\bX^\star$.
    \item[2.] Verify if $w^\star \in \cP(\bY)$: This is satisfied because $w^\star = f_\bX^\star \in \R = \cP(\bY)$.
    \item[3.] Find inward-pointing direction: We choose the inward-pointing direction $\phi := -1\in \R$. Indeed, for all $\theta \in (0,1]$
    \begin{equation}\label{eq:PopInwardPointing}
        f-(w^\star + \theta \phi) = f-f_\bX^\star + \theta \geq \theta > 0 \qquad \text{on } \bX.
    \end{equation}
    \item[4.-5.] Formulate the SDP hierarchy and bound $\ell$: The moment-SoS hierarchy for~\eqref{eq:PopLP} reads
    \begin{equation} \label{eq:popsos}
        \begin{array}{rl}
        f^\ell_{\bX} := \max\limits_{w\in\R} & w \\
        \st & f - w \in \cQ_\ell(\vh).
        \end{array}
    \end{equation}
    Using~\eqref{eq:PopInwardPointing}, we can apply {\bf Theorem~\ref{thm:OldBaldi}} and get a bound on $\ell$ respectively a convergence rate for $f^\ell_{\bX}$ to $f^\star_\bX$. We state this in {\bf Corollary~\ref{thm:poprate}}, where we use the notation $\|f\| := \max\limits_{\vx \in [-1,1]^n} |f(\vx)|$ from {\bf Theorem~\ref{thm:OldBaldi}}.
\end{enumerate}
    \begin{cor}[Convergence rate for POP {\cite[{\bf Theorems 4.2 \& 4.3}]{baldi2021moment}}] \label{thm:poprate} \leavevmode

For $n\geq 2$ and $\ell \in \N$ one has

{\begin{equation}\label{eq:poprate}
    0\leq f^\star_{\bX} - f^\ell_{\bX} \leq \frac{2\|f\|(\deg (f))^{\frac{7}{5}}\left(\frac{\gamma}{\ell})\right)^{\frac{1}{(2.5n\text{\L})}}}{1-(\deg (f))^{\frac{7}{5}}\left(\frac{\gamma}{\ell})\right)^{\frac{1}{(2.5n\text{\L})}}} \in \mathcal{O}\left(\ell^{-\frac{1}{(2.5n\text{\L})}}\right).
\end{equation}}
\end{cor}

{
\vspace{-3mm}
\begin{proof}
    Let $\ell \in \N$. We search for a range of $\varepsilon \geq 0$ for which we can verify $f^\ell_{\bX} \geq f^\star_{\bX} - \varepsilon$. That is, we want to certify, using \textbf{Theorem \ref{thm:OldBaldi}}, that $p:= f - f^\star_{\bX} + \varepsilon$ belongs to $\cQ_\ell(\vh)$. To apply \textbf{Theorem \ref{thm:OldBaldi}}, we first bound $\|p\|$ and $p^\star_{\bX}$ by
    \begin{equation*}
        \|p\| \leq 2 \|f\| + \varepsilon \quad \text{and} \quad p^\star_{\bX} = \varepsilon.
    \end{equation*}
    From \textbf{Theorem \ref{thm:OldBaldi}} we get $p\in \cQ_\ell(\vh)$ for
    \begin{equation*}
        \ell \geq \gamma(n,\vh) \deg(p)^{3.5n\text{\L}} \left(\nicefrac{\|p\|}{p^\star_\bX}\right)^{2.5n\text{\L}} = \gamma(n,\vh) \deg(f)^{3.5n\text{\L}} \left(\nicefrac{\|p\|}{\varepsilon}\right)^{2.5n\text{\L}}.
    \end{equation*}
    Using $\|p\| \leq 2 \|f\| + \varepsilon$, we search for $\varepsilon = \varepsilon_\ell\geq 0$ such that (after taking the $2.5n \text{\L}$-th root)
    \begin{eqnarray*}
        \ell^{\frac{1}{2.5n\text{\L}}} \geq \gamma(n,\vh)^{\frac{1}{2.5n\text{\L}}} \deg(f)^{\frac{7}{5}} \frac{2\|f\| + \varepsilon}{\varepsilon},
    \end{eqnarray*}
    i.e. we can choose
    \begin{equation*}
        \varepsilon_\ell := \frac{2\|f\|(\deg (f))^{\frac{7}{5}}\left(\frac{\gamma(n,\vh)}{\ell})\right)^{\frac{1}{(2.5n\text{\L})}}}{1-(\deg (f))^{\frac{7}{5}}\left(\frac{\gamma(n,\vh)}{\ell})\right)^{\frac{1}{(2.5n\text{\L})}}}.
    \end{equation*}
    We conclude
    \begin{equation*}
        0\leq f^\star_{\bX} - f^\ell_{\bX} \leq \varepsilon_\ell \in \mathcal{O}\left(\ell^{-\frac{1}{(2.5n\text{\L})}}\right)
    \end{equation*}
\end{proof}
}

It is worth mentioning that the bound in {\bf Corollary~\ref{thm:poprate}} is asymptotic, however, it is possible to have $f^\star_\bX = f^\ell_{\bX}$ already for finite $\ell \in \N$. This is exactly the case when $f - f^\star_\bX \in \cQ(\vh)$.

\begin{rem}
    Even though finite convergence for polynomial optimization is a generic property, see~\cite{nie2014exact}, testing for finite convergence cannot be performed in polynomial time unless $\mathrm{P} = \mathrm{NP}$, see~\cite{vargas2024hardness}. Similarly to the result in \cite{vargas2024hardness}, in~\cite{ahmadi2013complexity} several problems from dynamical systems, that can be approached via functional LPs~\eqref{eq:smp} and the moment-SoS hierarchy~\cite{papachristodoulou2002construction,schlosser2021converging,miller2023bounding}, are shown to be unsolvable in polynomial time unless $\mathrm{P} = \mathrm{NP}$. In practice, we see this as an indicator not to expect finite convergence for the moment-SoS hierarchy for the LPs~\eqref{eq:LPGeneral}. {More precisely,} the applications in \textbf{Sections~\ref{sec:ApplicationDynSys}} and \textbf{\ref{sec:vol}} do not (in general) enjoy finite convergence simply because of the absence of polynomial optimal points.
\end{rem}

\section{Application: dynamical systems}\label{sec:ApplicationDynSys}

We are now going to demonstrate the methodology on instances of the moment-SoS hierarchy related to the study of dynamical systems, such as optimal control~\cite{korda2017convergence} or stochastic differential equations~\cite{henrion2023moment}.

\subsection{Optimal control}\label{sec:ocp}

In this section, we consider the infinite horizon optimal control problem as presented in~\cite{korda2017convergence}:

\begin{align}
V^\star(\vy_0) := \inf\limits_{\vy(\cdot),\vu(\cdot)} & \displaystyle\int_0^\infty \e^{-\beta t} \ g(\vy(t),\vu(t)) \; \od t \notag \\
\st \; \; & \vy(t) = \vy_0 + \displaystyle \int_0^t \vf(\vy(s),\vu(s)) \; \od s \label{eq:ocp} \\
& \vy(t) \in \bY, \quad \vu(t) \in \bU \notag
\end{align}
with discount factor $\beta > 0$, $\vf \in \R[\vy,\vu]^{m}$, $g \in \R[\vy,\vu]$ and compact basic semi-algebraic sets $\bY := \bS(\vh_\bY) \subset \R^{m}$, $\bU := \bS(\vh_\bU) \subset \R^{n_\vu}$, $\vh_\bY \in \R[\vy]^{r_\vy}$, $\vh_\bU \in \R[\vu]^{r_\vu}$.

{A central object in the analysis of optimal control problems is the Hamilton-Jacobi-Bellman inequality
\begin{equation}\label{eq:HJBIneq}
    g - \beta \, V - \vf \cdot \grad \, V \geq 0 \text{ on } \bX := \bY \x \bU.
\end{equation}
For any $V$ satisfying~\eqref{eq:HJBIneq} it holds $V \leq V^\star$ on $\bY$, see for instance \cite{korda2017convergence}.}
Hence, for any probability measure $\mu_0 \in \cM(\bY)_+$ (i.e. s.t. $\mu_0(\bY)=1$) defining a random initial condition $Y_0$\footnote{This includes the deterministic setting under the form $\mu_0 = \delta_{\vy_0}$, where $\vy_0 \in \bY$ and $\delta_{\vy_0}$ is the Dirac measure in $\vy_0$ s.t. for all Borel measurable $\bA \subset \bY$, $\delta_{\vy_0}(\bA) = 1$ if $\vy_0 \in \bA$, $0$ else. Then, $\P(Y_0 = \vy_0) = \mu_0(\{\vy_0\}) = 1$: $Y_0$ is deterministic, and $\E_{\mu_0}[V^\star(Y_0)] := \int V^\star \; \od\mu_0 = V^\star(\vy_0)$.} {it holds}
\begin{subequations}
\begin{equation} \label{eq:hjb}
\begin{array}{rl}
\E_{\mu_0}[V^\star(Y_0)] \geq \sup\limits_{V \in \cP(\bY)} & \displaystyle \int V(\vy_0) \; \od\mu_0(\vy_0) =: \E_{\mu_0}[V(Y_0)] \\
\st \; \; \; \, & g - \beta \, V - \vf \cdot \grad \, V \geq 0 \text{ on } \bX := \bY \x \bU
\end{array}
\end{equation}

{
\begin{rem}[Regularity of $V^\star$]
    {\color{black}Typically, the optimal value function $V^\star$ is not polynomial} and does not even need to be differentiable (an example of an optimal control problem with non-smooth $V^\star$ is given in (\ref{eq:OptContrNonSmooth})). However, to investigate the gap between the left-hand side and the right-hand side in~\eqref{eq:hjb}, it is helpful to assume $V^\star$ being continuously differentiable, as we will do in {\bf Condition \ref{cond:ocp3}}. Such regularity of $V^\star$ implies that $V^\star$ satisfies the Hamilton-Jacobi-Bellman inequality with equality. More precisely it is the unique solution to the so-called Hamilton-Jacobi-Bellman equality. In particular, $V^\star$ becomes feasible for~\eqref{eq:hjb} if we extend the decision space $\cY = \cP(\bY)$ to $\hat{\cY} = C^1(\bY)$ as discussed in \textbf{Remark~\ref{rem:NoMinimizer}}. This observation will be key in our convergence analysis.
\end{rem}}

The optimization problem~\eqref{eq:hjb} is a function LP of the form~\eqref{eq:LPGeneral} for $\cY := \cP(\bY)$ equipped with the induced topology from $C^1(\bY)$, the space $\cX$ given by $\cX := \cC(\bX)$, the linear form $T$ on $\cY$ given by 
\begin{equation*}
    \langle T,w\rangle := \displaystyle\int w \; \od \mu_0,
\end{equation*}
the operator $\cA:\cY \rightarrow \cX$ given by
\begin{equation*}
    \cA \ V := \beta \, V - \vf \cdot \grad \, V,
\end{equation*}
and, by abuse of notation, the function $g$ in~\eqref{eq:smp} given by $-g$ (for $g$ from~\eqref{eq:hjb}). 
The corresponding hierarchy of SoS strengthenings~\eqref{eq:sosd} {for~\eqref{eq:hjb}}, is given formulated below: defining $r := r_\vy + r_\vu$, $\vh := (\vh_\bY,\vh_\bU) \in \R[\vy,\vu]^{r}$, it reads
\begin{equation} \label{eq:hjbsos}
\begin{array}{rl}
\overline{V_{\ell}}(\mu_0) := \sup\limits_{V \in \R_{d_\ell}[\vy]} & \displaystyle \int V(\vy_0) \; \od\mu_0(\vy_0) \\
\st \; \; \; \, & g - \beta \, V - \vf \cdot \grad \, V \in \cQ_\ell(\vh).
\end{array}
\end{equation}
\end{subequations}

{By~\eqref{eq:hjb}, the bound $\overline{V_{\ell}}(\mu_0)$ from~\eqref{eq:hjbsos} is a lower bound on $\E_{\mu_0}[V^\star(Y_0)]$. To assure convergence of $\overline{V_{\ell}}(\mu_0)$ to $\E_{\mu_0}[V^\star(Y_0)]$ as $\ell$ tends to infinity,} we rely on the following Condition (see~\cite[{\bf Assumption 1}]{korda2017convergence}):

\begin{cond} \label{cond:ocp} The following conditions hold:
\begin{enumerate}
    \item[\mylabel{cond:ocp1}{{\bf\ref*{cond:ocp}.1}}] {Setting $\bX := \bY \x \bU$ and $\vx = (\vy,\vu)$, it holds} $\bX \subset \bB = \{\vx \in \R^n \; ; \vx^\top\vx \leq 1\}$ where  $n = m + n_\vu$.
    \item[\mylabel{cond:ocp2}{{\bf\ref*{cond:ocp}.2}}] $\vh(\vO) \in \R_\oplus^r$ (i.e. the interior of $\bX$ contains $\vO$).
    \item[\mylabel{cond:ocp3}{{\bf\ref*{cond:ocp}.3}}] $V^\star \in C^{1,1}(\bY)$, that is $V^\star$ is differentiable and $\grad \, V^\star$ is Lipschitz continuous on $\bY$.
    \item[\mylabel{cond:ocp4}{{\bf\ref*{cond:ocp}.4}}] For all $\vy \in \bY$, the set $\vf(\vy,\bU)$ and the map $\vv \mapsto \inf \{ g(\vy,\vu) \; ; \vu \in \bU \ \text{and} \ \vf(\vy,\vu) = \vv\}$ are convex.
\end{enumerate}
\end{cond}

\noindent
We want to briefly discuss the properties in {\bf Condition~\ref{cond:ocp}}. Using {\bf Condition \ref{cond:ocp1}}, one can complement the description of $\bX = \bS(\vh)$ with the polynomial $$h_{r+1}(\vx) := 1 - \vx^\top\vx,$$ so that {\bf Assumption \ref{asm:poprateOld}} always holds. The second property in {\bf Condition~\ref{cond:ocp}} assures that the set $\bX$ has non-empty interior which loosens feasibility restrictions on the trajectories as well as the control in the interior of $\bX$. {\bf Condition \ref{cond:ocp3}} assures the existence of a minimizer of~\eqref{eq:hjb} -- namely $V^\star$ -- and further we can bound the modulus of continuity of its derivative (which is used when we apply {\bf Theorem~\ref{thm:CkApprox}}). Finally, {\bf Condition \ref{cond:ocp4}} is of technical nature and assures that there is no relaxation gap, i.e. that the optimal value of the right-hand side in~\eqref{eq:hjb} equals, and is not only bounded by, $\E_{\mu_0}[V^\star(Y_0)]$. We refer to~\cite{vinter1993convex} for details.

\begin{thm}[{\cite[{\bf Theorem 1}]{korda2017convergence}}] \label{thm:ocpcv} \leavevmode

Let $\mu_0 \in \cM(\bY)_+$ be a probability measure. Then, under $\mathbf{Condition \, \ref{cond:ocp}}$,
\begin{enumerate}
\item There is $\ell_0 \in \N$ s.t. $\forall \ell \geq \ell_0$, $\overline{V_\ell}(\mu_0) > -\infty$, i.e. \eqref{eq:hjbsos} is feasible.
\item Any $V$ feasible for \eqref{eq:hjbsos} satisfies $V \leq V^\star$ (i.e. it lower bounds the optimal value function).
\item $\forall \ell \geq \ell_0$, \eqref{eq:hjbsos} has an optimal solution $V_{\ell}^\star \in \R_{d_\ell}[\vy]$ s.t. $\overline{V_{\ell}}(\mu_0) = \E_{\mu_0}[V_{\ell}^\star]$.
\item $\E_{\mu_0}[V^\star(Y_0)] - \overline{V_\ell}(\mu_0) = \E_{\mu_0}[V^\star(Y_0) - V_{\ell}^\star(Y_0)] \underset{\ell\to\infty}{\longrightarrow} 0$ i.e. $V_{\ell}^\star$ converges to $V^\star$ in $L^1(\mu_0)$.
\end{enumerate}
\end{thm}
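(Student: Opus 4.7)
The plan is to handle the four claims in an order that builds on top of each other: item 2 first (it is the backbone of the other three), then item 1 (by constructing an explicit feasible sequence), then item 3 (by a compactness argument), then item 4 (by showing that the feasible sequence is also a maximizing sequence).

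For item 2, the idea is a standard verification/comparison argument. Fix any $V \in C^1(\bY)$ satisfying $g - \beta V - \vf \cdot \grad V \geq 0$ on $\bX$. For an admissible trajectory $(\vy(\cdot),\vu(\cdot))$ starting at $\vy_0$, I compute
\begin{equation*}
\frac{\od}{\od t}\bigl(\e^{-\beta t}V(\vy(t))\bigr) = \e^{-\beta t}\bigl(-\beta V + \grad V \cdot \vf\bigr)(\vy(t),\vu(t)) \leq \e^{-\beta t}\,g(\vy(t),\vu(t)).
\end{equation*}
Integrating from $0$ to $\infty$ and using that $\e^{-\beta t}V(\vy(t)) \to 0$ (since $V$ is continuous on the compact set $\bY$, hence bounded), I get $V(\vy_0) \leq \int_0^\infty \e^{-\beta t}g(\vy(t),\vu(t))\,\od t$. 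Taking the infimum over admissible controls yields $V(\vy_0) \leq V^\star(\vy_0)$ pointwise on $\bY$, and integrating against $\mu_0$ gives $\overline{V_\ell}(\mu_0) \leq \E_{\mu_0}[V^\star(Y_0)]$.

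For item 1, I exploit the $C^{1,1}$ regularity of $V^\star$ together with the inward-pointing direction $\phi \equiv -1$: the operator $V \mapsto -\beta V - \vf\cdot\grad V$ sends $\phi = -1$ to $\beta > 0$, so by \textbf{Lemma \ref{lem:positivepointing}} the inward-pointing \textbf{Condition \ref{condition:Inpointing}} holds. Concretely, by \textbf{Corollary \ref{cor:CkApprox}} applied with $k=1$ on a smooth extension of $V^\star$ to a neighborhood of $\bX$, there is a polynomial $V_d \in \R_d[\vy]$ with $\|V^\star - V_d\|_{C^1(\bY)} \leq c_0\,\omega_{V^\star,1}^{L^\infty}(\bY,1/d)$, and $C^{1,1}$ regularity yields $\omega_{V^\star,1}^{L^\infty}(\bY,1/d) \leq L/d$. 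Setting $\widehat{V}_d := V_d - \theta_d$ with $\theta_d := 2(\beta+\|\vf\|_\infty^\bX) L /(\beta d)$, a direct computation gives
\begin{equation*}
g - \beta \widehat{V}_d - \vf\cdot\grad\widehat{V}_d = \underbrace{(g - \beta V^\star - \vf\cdot\grad V^\star)}_{\geq 0} + O\!\left(\tfrac{1}{d}\right) + \beta\,\theta_d \;\geq\; \tfrac{c}{d} > 0
\end{equation*}
on $\bX$ for some $c>0$. By Putinar's \textbf{Theorem \ref{thm:psatz}}, this polynomial lies in $\cQ(\vh)$, hence in $\cQ_\ell(\vh)$ for every $\ell$ large enough (depending on $d$), which shows that \eqref{eq:hjbsos} is feasible for all $\ell \geq \ell_0$ for some $\ell_0$. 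For item 3, any feasible $V$ of \eqref{eq:hjbsos} obeys $V \leq V^\star$ by item 2 and is thus uniformly bounded above, while the SoS certificate $g - \beta V - \vf\cdot\grad V = \begin{pmatrix}\vh^\top & 1\end{pmatrix}\bsigma$ combined with boundedness of $\bX$ (via $1-\vx^\top\vx \in \cQ(\vh)$, Archimedean) and continuity on the compact sublevel set forces the feasible set of \eqref{eq:hjbsos} to be compact in $\R_{d_\ell}[\vy]$; existence of a maximizer then follows from continuity of the linear objective.

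For item 4, I combine the upper bound from item 2 with the feasible sequence $\widehat{V}_d$ from item 1. Since $\|\widehat{V}_d - V^\star\|_\infty^\bY \leq \|V_d - V^\star\|_\infty^\bY + \theta_d \to 0$ as $d\to\infty$, and $\mu_0$ is a probability measure, $\int \widehat{V}_d\,\od\mu_0 \to \int V^\star\,\od\mu_0$. For each $d$, pick $\ell(d)$ large enough that $\widehat{V}_d$ is feasible for \eqref{eq:hjbsos} at level $\ell(d)$; monotonicity of $\cQ_\ell(\vh)$ in $\ell$ then gives $\int \widehat{V}_d\,\od\mu_0 \leq \overline{V_\ell}(\mu_0) \leq \E_{\mu_0}[V^\star]$ for all $\ell \geq \ell(d)$, and sending $d$ (hence $\ell$) to infinity yields $\overline{V_\ell}(\mu_0) \to \E_{\mu_0}[V^\star(Y_0)]$. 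Combined with the pointwise inequality $V_\ell^\star \leq V^\star$ from item 2, the convergence of integrals against the probability measure $\mu_0$ upgrades to $L^1(\mu_0)$-convergence of $V_\ell^\star$ to $V^\star$, since for nonnegative $V^\star - V_\ell^\star$ we have $\|V^\star - V_\ell^\star\|_{L^1(\mu_0)} = \E_{\mu_0}[V^\star(Y_0)] - \overline{V_\ell}(\mu_0) \to 0$.

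The main obstacle is item 1: making the perturbation $\theta_d$ carefully quantitative so that the residual polynomial is provably strictly positive on $\bX$ while its $C^0$-norm on $\bX$ (and ideally on $[-1,1]^m$, as required by \textbf{Theorem \ref{thm:OldBaldi}}) stays controlled — this is exactly where the $C^{1,1}$ regularity of $V^\star$ is used in a non-trivial way, and where {Condition \ref{cond:ocp}.4} enters to ensure that the value function really is $C^{1,1}$ (via Cauchy–Lipschitz + convexity arguments from control theory). Item 4's qualitative convergence is then automatic, but quantifying the rate (which is the subject of {Corollary \ref{prop:PropOCPConvRate}} and {Theorem \ref{prop:ocpConvRatePoly}} later in the paper) requires plugging the explicit $d$-dependence of $\theta_d$ and $\|p_d\|$ into the effective Putinar bound \eqref{eq:effputinarOld}.
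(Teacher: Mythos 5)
The paper does not prove this result: it is simply cited as \cite[Theorem 1]{korda2017convergence}, so there is no internal proof to compare against. Your reconstruction has the right architecture (comparison/verification for item 2, Jackson approximation plus Putinar for item 1, compactness for item 3, sandwiching for item 4), and items 1 and 4 are sound as sketched. Two steps, however, have genuine gaps.

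For item 2, your verification computation does not close. You correctly compute
$\frac{\od}{\od t}\bigl(\e^{-\beta t}V(\vy(t))\bigr) = \e^{-\beta t}\bigl(-\beta V + \grad V\cdot\vf\bigr)$,
but the constraint of \eqref{eq:hjbsos} as written, $g - \beta V - \vf\cdot\grad V \geq 0$, rearranges to $\beta V + \vf\cdot\grad V \leq g$, which is \emph{not} the bound $-\beta V + \grad V\cdot\vf \leq g$ you invoke — the sign on $\vf\cdot\grad V$ is opposite. Moreover, even granting the displayed inequality, integrating $\frac{\od}{\od t}(\e^{-\beta t}V) \leq \e^{-\beta t}g$ from $0$ to $\infty$ gives $0 - V(\vy_0) \leq \int_0^\infty \e^{-\beta t}g\,\od t$, i.e., a \emph{lower} bound $V(\vy_0)\geq -\int_0^\infty \e^{-\beta t}g\,\od t$, not the announced upper bound. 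The argument you want runs with $\frac{\od}{\od t}(\e^{-\beta t}V) \geq -\e^{-\beta t}g$, which requires the pointwise inequality $g - \beta V + \vf\cdot\grad V \geq 0$ (a plus sign, matching the standard discounted-HJB inequality $\beta V \leq g + \vf\cdot\grad V$). The minus sign in the paper's \eqref{eq:HJBIneq} is most plausibly a typo inherited from a convention mismatch, but as written your derivation has incompatible signs in both the intermediate bound and the integration step, and needs to be redone with the $+\vf\cdot\grad V$ convention.

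For item 3, the compactness claim is too fast. The feasible set of \eqref{eq:hjbsos} is unbounded below ($V\mapsto V-c$, $c>0$, preserves feasibility since $\beta c\in\Sigma[\vx]\subset\cQ_\ell(\vh)$), so one must restrict to a superlevel set; but $\{V\in\R_{d_\ell}[\vy]: \int V\,\od\mu_0\geq c\}$ only constrains the integral of $V$, not its coefficient vector — in particular when $\mu_0$ is a Dirac mass it constrains $V$ at a single point. Combined with the pointwise upper bound $V\leq V^\star$ from item 2, this is still not enough to deduce that the set of near-optimal feasible $V$ is bounded in $\R_{d_\ell}[\vy]$. Attainment in an SoS program requires more: one typically bounds the Gram matrices of the SoS multipliers in $\cQ_\ell(\vh)$ via the Archimedean constraint $1-\vx^\top\vx\in\cQ(\vh)$ (which controls the traces of the Gram blocks), or invokes strong duality with a strictly feasible moment relaxation. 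Your phrase ``the SoS certificate\ldots forces the feasible set to be compact'' names the idea but does not supply it. The remaining items (the degree-$d$ Jackson approximant with constant downward shift $\theta_d$ for item 1, the monotonicity $\cQ_{\ell_0}\subset\cQ_\ell$ guaranteeing persistence of feasibility, and the $L^1$ upgrade in item 4 from $V^\star - V_\ell^\star\geq 0$ and convergence of the integrals) are fine.
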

In \cite{korda2017convergence}, the authors give an upper bound on the convergence rate in item \textit{4.} in {\bf Theorem \ref{thm:ocpcv}}. In this section, we improve this bound using the effective Positivstellensatz {\bf Theorem~\ref{thm:OldBaldi}}. Throughout this section, for dynamics $\vf = (f_1,\ldots,f_m):\bX \rightarrow \R^m$, we use the notation
\begin{equation*}
    \|\vf\|_\infty^{\bX} := \sum\limits_{i = 1}^m \|f_i\|_\infty^{\bX}.
\end{equation*}

We first introduce two technical lemmata that will be instrumental in the convergence rate computation and also informative on possible improvements for effective Positivstellensätze.

\begin{lem}\label{lem:PerturbOptimalValueFunction}
    For $d \in \N$ let $V_d \in \R_d[\vy]$ with $\|V_d-V^\star\|_{C^1(\bY)} \leq \frac{c_1}{d}$, where the constant $c_1$ is deduced from {\bf Corollary \ref{cor:CkApprox}}. For any $\eta >0$ let $V_{d,\eta} := V_d - \frac{c_1}{d}\left(1+\frac{\|\vf\|_\infty^{\bX}}{\beta}\right) - \eta \in \R_d[\vy]$. Then $V_{d,\eta}$ satisfies
    \begin{subequations}
    \begin{equation}\label{eq:Vde-V*}
        \|V_{d,\eta} - V^\star\|_{C^1(\bY)} \leq \left(2+\frac{\|\vf\|_\infty^{\bX}}{\beta}\right) \frac{c_1}{d} + \eta
    \end{equation}
    and
    \begin{equation}\label{eq:VdeHJB}
        g - \beta V_{d,\eta} + \vf \cdot \grad \, V_{d,\eta} \geq \beta \eta \; \text{ on } \bX = \bY \times \bU.
    \end{equation}
    \end{subequations}
\end{lem}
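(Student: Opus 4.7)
The plan is to carry out two direct computations corresponding to the two claims. Introduce the constant shift $\kappa := \frac{c_1}{d}\bigl(1 + \frac{\|\vf\|_\infty^{\bX}}{\beta}\bigr) + \eta$, so that $V_{d,\eta} = V_d - \kappa$ and, in particular, $\grad V_{d,\eta} = \grad V_d$.

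For the $C^1$ bound \eqref{eq:Vde-V*}, I would apply the triangle inequality to the decomposition $V_{d,\eta} - V^\star = (V_d - V^\star) - \kappa$. Since the $C^1$-norm of a constant equals its absolute value, this yields
\begin{equation*}
\|V_{d,\eta} - V^\star\|_{C^1(\bY)} \leq \|V_d - V^\star\|_{C^1(\bY)} + \kappa \leq \tfrac{c_1}{d} + \kappa,
\end{equation*}
and substituting the value of $\kappa$ gives exactly the announced bound $\bigl(2 + \|\vf\|_\infty^{\bX}/\beta\bigr)\tfrac{c_1}{d} + \eta$.

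For the feasibility estimate \eqref{eq:VdeHJB}, I would exploit the HJB inequality \eqref{eq:HJBIneq} satisfied by $V^\star$. Writing the algebraic identity
\begin{equation*}
g - \beta V_{d,\eta} - \vf \cdot \grad V_{d,\eta} = (g - \beta V^\star - \vf \cdot \grad V^\star) + \beta(V^\star - V_d) + \vf \cdot \grad(V^\star - V_d) + \beta \kappa,
\end{equation*}
the first term on the right is nonnegative by \eqref{eq:HJBIneq}, while the two middle error terms are bounded from below by $-\beta\|V_d - V^\star\|_\infty^{\bY} - \|\vf\|_\infty^{\bX} \sum_{j=1}^{n}\bigl\|\tfrac{\partial(V_d - V^\star)}{\partial y_j}\bigr\|_\infty^{\bY} \geq -\tfrac{c_1}{d}(\beta + \|\vf\|_\infty^{\bX})$, using the hypothesis $\|V_d - V^\star\|_{C^1(\bY)} \leq c_1/d$. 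Combining, I obtain $g - \beta V_{d,\eta} - \vf \cdot \grad V_{d,\eta} \geq \beta\kappa - \tfrac{c_1}{d}(\beta + \|\vf\|_\infty^{\bX}) = \beta\eta$ by the very definition of $\kappa$.

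There is no essential difficulty here: the lemma is a direct perturbation computation whose entire content is that the constant shift $\kappa$ has been precisely engineered so that the $C^1$-approximation error between $V_d$ and $V^\star$ is exactly absorbed, leaving $\beta\eta$ as a controllable strict-feasibility margin. The only points requiring care are the sign conventions in the HJB inequality and the elementary pointwise estimate $|\vf \cdot \grad \phi| \leq \|\vf\|_\infty \sum_j |\partial_j \phi|$ used to control the drift term by the gradient part of the $C^1$-norm.
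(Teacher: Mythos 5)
Your proof is correct and follows the same route as the paper: triangle inequality for the $C^1$ estimate, and a decomposition of the HJB-type expression around $V^\star$ in which the engineered constant shift (your $\kappa$) exactly absorbs the $C^1$-approximation error and leaves the margin $\beta\eta$. Incidentally, you write $g - \beta V_{d,\eta} - \vf\cdot\grad\,V_{d,\eta}$ with a minus sign in front of the drift term, matching the HJB inequality \eqref{eq:HJBIneq} and the SoS strengthening \eqref{eq:hjbsos}, whereas the lemma as stated in \eqref{eq:VdeHJB} carries a $+$; that is a sign typo in the statement, and your convention is the intended one.
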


\begin{proof}
    The arguments can be found in \cite[Lemma 3]{korda2018convergence} and \cite[Lemma 2]{korda2018convergence}. Since the arguments are short, we will state the proof here as well. First, we compute $c_1$ using {\bf Corollary \ref{cor:CkApprox}}: There exists $V_d \in \R_d[\vy]$ such that
    \begin{align*}
        \|V_d - V^\star\|_{C^1(\bY)} \leq c_0 \, \omega_{V^\star,1}^{L^\infty}(\bY,\nicefrac{1}{d})
    \end{align*}
    with, using the Lipschitz condition on $\grad \, V^\star$ given in {\bf Condition \ref{cond:ocp}}, $\omega_{V^\star,1}^{L^\infty}(\bY,\frac{1}{d}) \propto \frac{1}{d}$, yielding the constant $c_1$ such that $c_0\omega_{V^\star,k}^{L^\infty}(\bY,r) \leq c_1 \frac{1}{d}$.
    Then, we have
    \begin{align*}
        \|V_{d,\eta} - V^\star\|_{C^1(\bY)} & \leq \|V_{d} - V^\star\|_{C^1(\bY)} + \|V_{d,\eta} - V_d\|_{C^1(\bY)} \leq \frac{c_1}{d} + \frac{c_1}{d}\left(1+\frac{\|\vf\|_\infty^{\bX}}{\beta}\right) + \eta.
    \end{align*}
    This is \eqref{eq:Vde-V*}. For \eqref{eq:VdeHJB} we compute
    \begin{align*}
        g-\beta V_{d,\eta} + \vf \cdot \grad \, V_{d,\eta} & = \underset{\geq 0}{\underbrace{g-\beta V^\star + \vf \cdot \grad \, V^\star}} + \beta (V^\star - V_{d,\eta}) + \vf \cdot \grad (V_{d,\eta} - V^\star)\\
        & \geq 0 + \beta\left(V^\star - V_d + \frac{c_1}{d}\left(1+\frac{\|\vf\|_\infty^{\bX}}{\beta}\right) + \eta\right) + \vf \cdot \grad(V_d - V^\star)\\
        & \geq \beta \left(-\| V^\star - V_d\|_\infty^\bY + \frac{c_1}{d}\left(1+\frac{\|\vf\|_\infty^{\bX}}{\beta}\right) + \eta\right) - \|\grad(V^\star - V_d)\|_\infty^\bY \|\vf\|_\infty^{\bX}\\
        & \geq \beta \left(-\| V^\star - V_d\|_{C^1(\bY)} + \frac{c_1}{d}\left(1+\frac{\|\vf\|_\infty^{\bX}}{\beta}\right) + \eta\right) - \|V^\star - V_d\|_{C^1(\bY)} \|\vf\|_\infty^{\bX} \\
        & \geq \beta \left(-\frac{c_1}{d} + \frac{c_1}{d}\left(1+\frac{\|\vf\|_\infty^{\bX}}{\beta}\right) + \eta\right) - \frac{c_1}{d} \|\vf\|_\infty^{\bX}\\
        & = \beta \eta.
    \end{align*}
\end{proof}

\begin{rem}[Inward-pointing condition for smooth control] \label{rem:PerturbOCP} \leavevmode

    Notice that {\bf Lemma \ref{lem:PerturbOptimalValueFunction}} is essentially a statement of satisfaction of the inward-pointing {\bf Condition \ref{condition:Inpointing}} while keeping control on the objective, in the special case of the optimal control problem with Lipschitz differentiable optimal value function.
\end{rem}

\begin{lem}\label{lem:MatteoBound}

    For any nonnegative polynomial $p \in \cP(\bX)_+$, it holds
    \begin{equation} \label{eq:BOUND}
        \|p\| \leq \left(1 + \frac{\deg(p)^2}{4}\left(\nicefrac{2}{b}\right)^{\deg(p)+1}\right) \|p\|_\infty^\bX,
    \end{equation}
    where $b \in (0,1)$ is such that $[-b,b]^n \subset \bX$ (whose existence is guaranteed by {\bf Condition \ref{cond:ocp}.2}).
\end{lem}
    \begin{proof}
    See {\bf Appendix \ref{appendix:MattBound}}.
    \end{proof}

\begin{rem}
Notice that {\bf Lemma~\ref{lem:MatteoBound}} introduces a gap between the norm {$\|p\|$} that we need to use in the effective Positivstellensatz and the norm {$\|p\|_\infty^\bX$} that is available in our setting (this is discussed in more detail in {\bf Remark~\ref{rem:AmbientSet}}).
\end{rem}  

Now we are already in the position to apply an effective version of Putinar's Positivstellensatz.

\begin{thm}[Effective Putinar Positivstellensatz for optimal control]\label{prop:PropOCPepsilon} \leavevmode

    Let $\eta > 0$ and $d \in \N$ such that $d_\vf := \deg(f)+d \geq \deg (g)$. {Let $V_{d,\eta}$ be defined as in {\bf Lemma \ref{lem:PerturbOptimalValueFunction}}}. Under {\bf Condition \ref{cond:ocp}}, there exist $A,B,C \in \R_\oplus$ so that $V_{d,\eta}$ is feasible for \eqref{eq:hjbsos} for any
    \begin{subequations} \label{eq:OCPepsilon}
        \begin{equation} \label{eq:PropOCPQM}
            \ell \geq \gamma d_\vf^{3.5 n \text{\L}} \left(\frac{A}{\eta} + \frac{B}{\eta \, d} + 1\right)^{2.5 n \text{\L}}\left(1 + C^{d_\vf+1}\cdot \nicefrac{d_\vf^2}{4}\right)^{2.5 n \text{\L}}.
        \end{equation}
        Moreover,
        it holds
        \begin{equation} \label{eq:PropOCPepsilon}
            \E_{\mu_0}[V_{d,\eta}(Y_0)] \geq \E_{\mu_0}[V^\star(Y_0)] - \eta - \left(2 + \frac{\|\vf\|_\infty^\bX}{\beta} \right) \frac{c_1}{d}.
        \end{equation}
    \end{subequations}
\end{thm}
\begin{proof}
    \begin{subequations}
    We use {\bf Theorem \ref{thm:OldBaldi}}: Denoting $p := g - \beta\,V_{d,\eta} - \vf\cdot\grad\,V_{d,\eta} \geq \beta \, \eta > 0$, we know that $p \in \cQ_\ell(\vh)$ (i.e. $V_{d,\eta}$ is feasible for \eqref{eq:hjbsos}) for
    \begin{equation} \label{eq:ellbound}
        \ell \geq \gamma \deg(p)^{3.5n\text{\L}} \left(\nicefrac{\|p\|}{p^\star_\bX}\right)^{2.5n\text{\L}}
    \end{equation}
    {for $\gamma = \gamma(n,\vh)$ the constant from {\bf Theorem \ref{thm:OldBaldi}}}. Hence, we only need to estimate bounds on $\|p\|$, $\deg(p)$ and $p_{\bX}^\star$.
    \begin{itemize}
        \item $p_{\bX}^\star \geq \beta \, \eta$ is a direct consequence of how we constructed $V_{d,\eta}$ in {\bf Lemma \ref{lem:PerturbOptimalValueFunction}}.
        \item $\deg(p) \leq \max(\deg(g), \deg(V_{d,\eta}), \deg(\vf\cdot\grad \, V_{d,\eta})) \leq d_\vf := \deg(\vf) + d$.
        \item Estimating $\|p\|$ is the difficult part. First, we have, denoting $q := g - \beta \, V^\star - \vf\cdot\grad \, V^\star \geq 0$:
    \end{itemize}
    \begin{align*}
        \|p\|_\infty^\bX - \|q\|_\infty^\bX & \leq \|p - q\|_\infty^\bX \\
        & = \|\beta(V^\star - V_{d,\eta}) + \vf\cdot\grad(V^\star - V_{d,\eta})\|_\infty^\bX \\
        & = \left\|\beta\left(V^\star - V_{d} + \frac{c_1}{d}\left(1 + \frac{\|\vf\|_\infty^\bX}{\beta}\right) + \eta\right) + \vf\cdot\grad(V^\star - V_{d})\right\|_\infty^\bX \\
        & \leq \beta \left(\|V^\star - V_d\|_\infty^\bX + \frac{c_1}{d}\left(1 + \frac{\|\vf\|_\infty^\bX}{\beta}\right) + \eta \right) + \|\vf\|_\infty^\bX \, \|\grad(V^\star - V_d)\|_\infty^\bX \\
        & \leq (\beta + \|\vf\|_\infty^\bX)\left(\|V^\star - V_d\|_{C^1(\bY)} + \frac{c_1}{d}\right) + \eta \, \beta \\
        & \leq 2 (\beta + \|\vf\|_\infty^\bX) \frac{c_1}{d} + \eta \, \beta
    \end{align*}
    which gives the upper bound
    \begin{equation} \label{eq:upboundp}
        \|p\|_\infty^\bX \leq \|q\|_\infty^\bX + 2 (\beta + \|\vf\|_\infty^\bX) \frac{c_1}{d} + \eta \, \beta.
    \end{equation}
    However, {in~\eqref{eq:ellbound} we want to bound in the the term $\frac{\norm{p}}{p^\star_{\bX}}$. We do so using    \begin{equation*}
    \frac{\norm{p}}{p^\star_{\bX}} = \left(\frac{\norm{p}_{\infty}^{\bX}}{p^\star_{\bX}}\right) \left(\frac{\norm{p}}{\norm{p}_{\infty}^{\bX}} \right),
\end{equation*}
    where we bound the first term on the right-hand side by~\eqref{eq:upboundp} and $p_\bX^\star \geq \beta \eta$, and for the second term we use {\bf Lemma \ref{eq:BOUND}.}
    }
\end{subequations}
    We finally get the claimed bound by putting together in~\eqref{eq:ellbound}:
    \begin{center}
    \scalebox{1}{$\ell \geq \gamma d_\vf^{3.5 n \text{\L}} \underset{ \geq \nicefrac{\|p\|_\infty^\bX}{p^\star_\bX}}{\underbrace{\left(\frac{\|q\|_\infty^\bX}{\beta \, \eta} + 2 \frac{\beta + \|\vf\|_\infty^\bX}{\beta \, \eta} \frac{c_1}{d} + 1\right)}}^{2.5 n \text{\L}}\underset{\geq \nicefrac{\|p\|}{\|p\|_\infty^{\bX}}}{\underbrace{\left(1 + \frac{d_\vf^2}{4}\left(\nicefrac{2}{b}\right)^{d_\vf+1}\right)}}^{2.5 n \text{\L}}$}
    \end{center}
    which is exactly \eqref{eq:PropOCPQM}. Eventually, we compute
    \begin{align*}
        0 & \leq \E_{\mu_0}[V^\star(Y_0)] - \overline{V_{\ell}}(\mu_0)  \leq \E_{\mu_0}[V^\star(Y_0) - V_{d,\eta}(Y_0)] \leq \|V^\star - V_{d,\eta}\|_\infty^\bY\\
        & \leq \|V^\star - V_{d,\eta}\|_{C^1(\bY)} \stackrel{\eqref{eq:Vde-V*}}{\leq} \left(2 + \frac{\|\vf\|_\infty^\bX}{\beta}\right) \frac{c_1}{d} + \eta.
    \end{align*}
\end{proof}

\begin{cor}[Convergence rate for optimal control]\label{prop:PropOCPConvRate} \leavevmode

Under {\bf Condition \ref{cond:ocp}}, for $\ell \in \N$ large enough it holds
    \begin{equation}
        |\E_{\mu_0}[V^\star(Y_0)] - \overline{V_{\ell}}(\mu_0)| \in \cO(\nicefrac{1}{\log \ell}) \; \; \text{ as } \ell \rightarrow \infty.
    \end{equation}
\end{cor}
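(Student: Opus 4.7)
The plan is to extract the convergence rate by inverting the degree bound \eqref{eq:PropOCPQM} from \textbf{Theorem \ref{prop:PropOCPepsilon}}, tracking the dependence of $\ell$ on the approximation accuracy $\varepsilon$. The key observation is that the right-hand side of \eqref{eq:PropOCPQM} contains the exponential factor $C^{d_\vf+1}$ (with $d_\vf = \deg(\vf) + d$), so $\ell$ grows exponentially in $d$, which in turn will force $\varepsilon$ to decay only logarithmically in $\ell$.

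Concretely, given a target error $\varepsilon > 0$ (small enough), I would choose
$$d := \left\lceil \tfrac{2 c_1}{\varepsilon}\bigl(2 + \tfrac{\|\vf\|_\infty^\bX}{\beta}\bigr) \right\rceil, \qquad \eta := \tfrac{\varepsilon}{2},$$
so that \eqref{eq:CondOCPepsilond} and \eqref{eq:CondOCPepsiloneta} are both satisfied, and hence \eqref{eq:PropOCPepsilon} guarantees $0 \leq \E_{\mu_0}[V^\star(Y_0)] - \overline{V_{\ell}}(\mu_0) \leq \varepsilon$ as soon as $\ell$ satisfies \eqref{eq:PropOCPQM}. Plugging these choices into \eqref{eq:PropOCPQM}, the factors $\tfrac{A}{\eta} + \tfrac{B}{\eta d} + 1$ and $d_\vf^{3.5 m \text{\L}}$ grow only polynomially in $1/\varepsilon$, whereas the factor $\bigl(1 + \tfrac{d_\vf^2}{4} C^{d_\vf+1}\bigr)^{2.5 m \text{\L}}$ grows like $\exp(\kappa\, d)$ for an explicit constant $\kappa = 2.5 m \text{\L} \log C$. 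Altogether, the bound reads
$$\ell \;\geq\; K_1 \varepsilon^{-K_2}\, \exp(\kappa\, d) \;\leq\; K_1 \varepsilon^{-K_2}\, \exp(K_3/\varepsilon)$$
for explicit constants $K_1,K_2,K_3 > 0$ depending only on the data of the problem.

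Inverting this inequality gives the rate. Taking logarithms, it suffices for $\ell$ to satisfy $\log \ell \geq K_3/\varepsilon + K_2 \log(1/\varepsilon) + \log K_1$. For large $\ell$, this is implied by $\log \ell \geq 2K_3/\varepsilon$, i.e.\ $\varepsilon \geq 2K_3/\log \ell$. Therefore, setting $\varepsilon_\ell := 2K_3/\log \ell$ for large $\ell$ produces a feasible triple $(d,\eta,\varepsilon)$ for \textbf{Theorem \ref{prop:PropOCPepsilon}}, which yields
$$0 \;\leq\; \E_{\mu_0}[V^\star(Y_0)] - \overline{V_{\ell}}(\mu_0) \;\leq\; \varepsilon_\ell \;=\; \frac{2K_3}{\log \ell} \;\in\; \cO\!\left(\tfrac{1}{\log \ell}\right).$$

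The main obstacle is bookkeeping: making sure the polynomial-in-$1/\varepsilon$ prefactor $\varepsilon^{-K_2}$ does not dominate the exponential factor $\exp(K_3/\varepsilon)$ in the inversion step. This is handled by noting that for $\varepsilon \to 0$, $\exp(K_3/\varepsilon)$ dominates any polynomial in $1/\varepsilon$, so absorbing $\varepsilon^{-K_2}$ into a slightly enlarged exponential constant (e.g.\ by taking $\log \ell \geq 2K_3/\varepsilon$ instead of $K_3/\varepsilon$) is harmless for large $\ell$. The remaining work is routine algebra in the constants, and the final $\cO(1/\log \ell)$ rate falls out.
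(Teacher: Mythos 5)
Your proposal is correct and follows essentially the same route as the paper: invoke Theorem~\ref{prop:PropOCPepsilon}, make a compatible choice of $(d,\eta,\varepsilon)$ with $d \sim 1/\varepsilon$, observe that the bound \eqref{eq:PropOCPQM} on $\ell$ is dominated by the exponential factor $C^{d_\vf+1}$ so that $\log\ell \in \cO(d) = \cO(1/\varepsilon)$, and then invert to obtain $\varepsilon \in \cO(1/\log\ell)$. The paper merely reverses the order of parametrization, fixing $d$ first and setting $\eta = 1/d$ and $\epsilon_d = ((2+\|\vf\|_\infty^\bX/\beta)c_1+1)/d$, whereas you fix $\varepsilon$ and derive $d$ and $\eta = \varepsilon/2$; these are equivalent reparametrizations and lead to the same asymptotic inversion.
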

\begin{proof}
    \begin{subequations}
    We are going to use {\bf Theorem \ref{prop:PropOCPepsilon}}. 
    Let $d \in \N$ and take
    $$ \eta := \frac{1}{d}.$$
    Then, {\bf Theorem \ref{prop:PropOCPepsilon}} ensures that
    \begin{equation}
        0 \leq \E_{\mu_0}[V^\star(Y_0)] - \overline{V_{\ell}}(\mu_0) \leq \frac{1}{d}\left( 1 + \left(2 + \frac{\|\vf\|_\infty^\bX}{\beta} \right) c_1 \right) {=:\varepsilon_d} \in \mathcal{O}\left(\frac{1}{d}\right)    
    \end{equation}
    for
    $$ \ell \geq \gamma d_\vf^{3.5 n \text{\L{}}} \left(A \, d + B + 1\right)^{2.5 n \text{\L{}}} (1 + C^{d_\vf+1}\cdot\nicefrac{d_\vf^2}{4})^{2.5 n \text{\L{}}} =: \ell_b$$
    for certain $A,B,C >0$. Now, an asymptotic equivalent is given by
    $$ \ell_b \underset{d\to\infty}{\sim} \gamma \frac{A^{2.5n \text{\L{}}}}{4^{2.5n \text{\L}}} d^{11 n \text{\L{}}} C^{(d+\deg(\vf)+1)2.5 n \text{\L{}}} \in \underset{d\to\infty}{\cO} \left(d^{11 n \text{\L{}}} C^{2.5 n \text{\L{}} d}\right) $$
    and taking the $\log$ yields
    $$ \log(\ell_b) \in \cO(d) = \cO(\nicefrac{1}{\epsilon_d}) \; \; \; \text{ as } d\to\infty. $$
    Finally, remembering that $0 \leq \E_{\mu_0}[V^\star(Y_0)] - \overline{V_{\lceil\ell_b\rceil}}(\mu_0) \leq \varepsilon_d$ and inverting the above asymptotic expression gives the announced result.
    \end{subequations}
\end{proof}

\begin{rem}[Comparison with \cite{korda2017convergence}] \leavevmode

    In \cite{korda2017convergence}, using a previous effective version of Putinar's Positivstellensatz, the authors came up with a much worse convergence rate of $\nicefrac{1}{\log \log \ell}$. Using the effective Putinar Positivstellensatz from \cite{baldi2021moment}, we could remove an exponential dependence in the degree and hence sharply improve the convergence rate. Moreover, at the price of some additional assumptions on the state set $\bY$, it is even possible to derive a \textit{polynomial} convergence rate, as we will show now. Indeed, the remaining exponential dependence is an artifact coming from {\bf Lemma \ref{lem:MatteoBound}}, but the effective Positivstellensatz gives polynomial dependence.
\end{rem}

\subsection{A polynomial convergence rate}

In this paragraph, we derive a convergence rate for the GMP \eqref{eq:hjbsos} in which the level $\ell$ of the hierarchy is polynomial in $\frac{1}{\varepsilon}$ for the relaxation gap $\E_{\mu_0}[V^\star(Y_0)] - \overline{V_{\ell}}(\mu_0) \leq \varepsilon$. The idea is to side-step the exponential growth of the degree bound in {\bf Lemma \ref{lem:MatteoBound}} that arises from bounding the supremum of a polynomial on the hypercube by its supremum on a smaller cube. Here, we will extend the optimal value function $V^\star$ to the whole hypercube $[-1,1]^m$ and, only then, approximate it on the full hypercube $[-1,1]^m$ by a polynomial $V_d$. This allows us to bound $\|V_d\|$ simply by $\|V^\star\| + 1$ (for $d$ large enough) instead of $\|V_d\| \leq C^d\|V_d\|_\infty^\bY$. For this to work we need to guarantee that there exists an extension of $V^\star$ to $[-1,1]^m$ with sufficient regularity.

To extend $V^\star$ we need to introduce the Hölder spaces and norms: We say a function $w$ belongs to the Hölder space $C^{k,a}(\bY)$ for $k\in \N$ and $a \in (0,1]$ if $w\in C^k(\bY)$ and its $k$-th derivative is $a$-Hölder-continuous, i.e. its $a$-Hölder coefficient is finite:
\begin{equation*}
    \zeta_{k,a}^\bY(w) := \max\limits_{|\balpha| = k}\sup\limits_{\vy \neq \vy' \in \bY} \frac{|\partial_{\balpha} w(\vy) - \partial_{\balpha} w(\vy')|}{\|\vy - \vy'\|^a} < \infty.
\end{equation*}
For bounded $\bY$, we equip the space $C^{k,a}(\bY)$ with the norm
\begin{equation*}
    \|w\|_{C^{k,a}(\bY)} := \|w\|_{C^k(\bY)} + \zeta_{k,a}^\bY(w).
\end{equation*}

The notion of Hölder regularity is used to state the following condition, which is instrumental in ensuring a polynomial convergence rate instead of the logarithmic one given in {\bf Corollary \ref{prop:PropOCPConvRate}}.

\begin{cond}\label{cond:ocpBoundaryregularity}
    The set $\bY$ has $C^{1,1}$ boundary, that is the boundary $\partial \bY$ is locally the graph of a $C^{1,1}$ function in the above sense of having a finite Hölder coefficient.
\end{cond}

Next, we provide an extension result for Hölder functions from \cite{gilbarg1977elliptic}.

\begin{lem}[Extension Lemma; {\cite[Lemma 6.37]{gilbarg1977elliptic}}]\label{thm:HölderExtension} \leavevmode

 Let $k \geq 1$ be an integer and $a \in (0,1]$. Let $\bY \subset \R^m$ be compact with $C^{k,a}$ boundary. Let $\bOm$ be an open and bounded set containing $\bY$. Then for every function $w \in C^{k,a}(\bY)$ there exists an extension $\Bar{w} \in C^{k,a}(\overline{\bOm})$ with $w(\vy) = \Bar{w}(\vy)$ for all $\vy \in \bY$ and
\begin{equation} \label{eq:extlemma}
    \|\Bar{w}\|_{C^{k,a}(\overline{\bOm})} \leq c_2 \, \|w\|_{C^{k,a}(\bY)}
\end{equation}
for some constant $c_2 = c_2(m,k,a,\bY,\bOm)$ independent of $w$.  
\end{lem}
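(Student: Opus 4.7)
The plan is to prove the extension lemma by combining three classical ingredients: boundary straightening via $C^{k,a}$ charts, Seeley's higher-order reflection on a half-space, and a partition-of-unity gluing. Since $\bY$ is compact, its boundary $\partial\bY$ can be covered by finitely many open sets $U_1,\ldots,U_M \subset \R^n$ such that on each $U_i$ the assumed $C^{k,a}$ regularity of $\partial\bY$ provides a $C^{k,a}$-diffeomorphism $\Phi_i : U_i \to V_i$ mapping $U_i \cap \bY$ onto $V_i \cap \{y_n \leq 0\}$ and $U_i \cap \partial\bY$ onto $V_i \cap \{y_n = 0\}$. Adding one interior set $U_0 \Subset \mathrm{int}(\bY)$, one obtains a finite open cover of $\bY$, to which one subordinates a smooth partition of unity $\{\eta_i\}_{i=0}^M$ and chooses a cutoff $\chi \in C_c^\infty(\bOm)$ equal to $1$ on a neighborhood of $\bY$.

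On each boundary chart, I would transfer $w\,\eta_i$ to the half-space via $\Phi_i$ and extend it across $\{y_n = 0\}$ using Seeley's reflection
$$\tilde v(\vy', y_n) := \sum_{j=0}^{k} \lambda_j \, v(\vy', -\alpha_j y_n), \qquad y_n > 0,$$
where $0 < \alpha_0 < \cdots < \alpha_k$ are any fixed positive reals and the coefficients $\lambda_0,\ldots,\lambda_k$ solve the Vandermonde system $\sum_j \lambda_j (-\alpha_j)^m = 1$ for $m = 0,\ldots,k$. This choice matches all derivatives up to order $k$ across the boundary, so $\tilde v$ is $C^k$ globally, and the $a$-Hölder seminorm of its $k$-th derivative is controlled by that of $v$ since reflection composed with scaling is Lipschitz. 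Pulling back via $\Phi_i^{-1}$ yields an extension $\bar w_i \in C^{k,a}(U_i)$ of $w\,\eta_i$, and the final extension is
$$\bar w := \chi \cdot \Bigl( \eta_0\, w + \sum_{i=1}^{M} \bar w_i \Bigr),$$
where $\eta_0 w$ is trivially extended by zero outside $\bY$, as $\eta_0$ is compactly supported in $\mathrm{int}(\bY)$.

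The Hölder bound \eqref{eq:extlemma} then follows from three ingredients: (i) a linear estimate $\|\tilde v\|_{C^{k,a}} \leq C(k,\{\alpha_j\}) \, \|v\|_{C^{k,a}}$ for Seeley's reflection, immediate from the explicit formula; (ii) a chain-rule estimate asserting that composition with a $C^{k,a}$ diffeomorphism $\Phi$ (or its inverse) is bounded on $C^{k,a}$, with constant depending only on $\|\Phi\|_{C^{k,a}}$ and $\|\Phi^{-1}\|_{C^{k,a}}$; and (iii) the fact that multiplication by the fixed functions $\chi$ and $\eta_i$ is bounded on $C^{k,a}$. Summing these estimates over the finite cover yields the universal constant $c_2$ depending only on $n$, $k$, $a$, $\bY$, and $\bOm$.

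The main obstacle is ingredient (ii): showing that $C^{k,a}$ regularity, norms \emph{and constants} are preserved under composition with a $C^{k,a}$ diffeomorphism. The $C^k$ part is routine Faà di Bruno, but the Hölder seminorm of the $k$-th derivative requires care, because differentiating the composition $k$ times produces terms of the form $(\partial^{\balpha} v \circ \Phi) \cdot P(\partial\Phi,\ldots,\partial^k\Phi)$, whose Hölder difference quotient must be bounded using simultaneously the boundedness of $\Phi^{-1}$'s derivatives (to absorb the factor $\|\vy - \vy'\|^a$ coming from the base-point change) and the product rule for Hölder continuous factors. Once this composition estimate is set up, the rest of the argument is the standard Seeley construction with mechanical bookkeeping.
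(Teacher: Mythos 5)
The paper does not actually prove this lemma: it simply cites it as \cite[Lemma 6.37]{gilbarg1977elliptic} and uses it as a black box. So there is no ``paper proof'' to compare against; the relevant comparison is with the proof in Gilbarg--Trudinger (and the standard textbook literature).

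Your proposal follows the same route as the cited reference: local straightening of the boundary by $C^{k,a}$ charts, a higher-order reflection across the flattened boundary, and a partition-of-unity patching together with a cutoff inside $\bOm$. The structure and all the key estimates you invoke are correct, and you correctly identify ingredient (ii) -- composition with a $C^{k,a}$ diffeomorphism, where the top-order derivative is only $C^{0,a}$ -- as the genuinely nontrivial bookkeeping step. Two small remarks. First, a naming quibble: the finite-sum reflection with coefficients solving the Vandermonde system $\sum_j \lambda_j(-\alpha_j)^m = 1$, $m=0,\dots,k$, is usually attributed to Hestenes (and Lichtenstein) rather than to Seeley; Seeley's construction is the infinite-series variant that produces a $C^\infty$ extension. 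Here the finite version is exactly the right tool since only $C^{k,a}$ is claimed. Second, for the patching formula $\bar w := \chi\bigl(\eta_0 w + \sum_i \bar w_i\bigr)$ to recover $w$ on $\bY$ and to yield globally defined functions, each $\bar w_i$ needs to vanish near $\partial U_i$ so that its extension by zero outside $U_i$ is $C^{k,a}$; this holds because $\eta_i$ is compactly supported in $U_i$, so the reflected function already vanishes near $\partial U_i$ -- worth stating explicitly, but not a gap. With those points noted, your argument is a sound version of the standard proof and is consistent in spirit with the one in Gilbarg--Trudinger.
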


Under {\bf Conditions \ref{cond:ocpBoundaryregularity} and \ref{cond:ocp}}.3, {\bf Lemma \ref{thm:HölderExtension}} ensures that there exists an extension $V\in C^{1}([-1,1]^m)$ of $V^\star$ such that $\grad \ V$ is Lipschitz continuous and there exists a constant $c_2 = c_2(m,\bY)$ such that
\begin{subequations}
\begin{equation}\label{eq:VboundC1}
    \|V\|_{C^{1,1}([-1,1]^m)} \leq c_2 \|V^\star\|_{C^{1,1}(\bY)}.
\end{equation}

For the rest of this paragraph, we follow the same path as previously in this section. That is, by {\bf Corollary \ref{cor:CkApprox}}, let $V_d\in \R_d[\vy]$ be a polynomial and $c_1$ be a constant (independent of $V$) with 
\begin{equation} \label{eq:VVd}
    \|V-V_d\|_{C^1([-1,1]^m)} \leq \frac{c_1}{d}.
\end{equation}
As in {\bf Lemma \ref{lem:PerturbOptimalValueFunction}}, for $\eta > 0$, we define
\begin{equation}\label{eq:DefVdeta}
    V_{d,\eta} := V_d - \frac{c_1}{d}\left(1+\frac{\|\vf\|}{\beta}\right) - \eta \in \R_d[\vy],
\end{equation}
\end{subequations}
where we recall that $\|\vf\| = \max\{|\vf(\vy,\vu)| \; ; \, (\vy,\vu) \in [-1,1]^n \}$ with $n = m + n_\vu$. In the following lemma, we show that $V_{d,\eta}$ is strictly feasible.

\begin{lem}\label{lem:ocpExtBound}
    Let {\bf Condition \ref{cond:ocpBoundaryregularity}} hold and $c_1 = c_1(m,\bY,V^\star)$, $c_2 = c_2(m,\bY)$ be the constants from \eqref{eq:VboundC1} and \eqref{eq:VVd}. For $d\in \N$, $\eta >0$ the function $V_{d,\eta}$ satisfies
    \begin{subequations}
    \begin{equation}\label{eq:PolyOCPVde-V*}
        \|V_{d,\eta} - V^\star\|_{C^1(\bY)} \leq \left(2+\frac{\|\vf\|}{\beta}\right) \frac{c_1}{d} + \eta.
    \end{equation}
    Further, for the polynomial function $p := g - \beta V_{d,\eta} + \vf \cdot \grad \, V_{d,\eta} \in \cP(\bX)$ it holds
    \begin{equation}\label{eq:rhodboundbeta}
        p \geq \beta \eta \; \text{ on } \bX = \bY \times \bU
    \end{equation}
    and, recalling that $\|p\| = \max\{|p(\vx)| \; ; \, \vx \in [-1,1]^n\}$,
    \begin{equation}\label{eq:VdeHJBUpperBound}
        \|p\| \leq \|g\| + c_2 \, \|V^\star\|_{C^{1,1}(\bY)} \left( \beta  + \|\vf\| \right) + 2\frac{c_1}{d} \left(\beta+ \|\vf\| \right) + \beta \eta.
    \end{equation}
     \end{subequations}
\end{lem}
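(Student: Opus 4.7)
The plan is to adapt the proof of \textbf{Lemma \ref{lem:PerturbOptimalValueFunction}}, but with the crucial twist that $V_d$ now approximates the Hölder extension $V$ on the full cube $[-1,1]^n$ rather than $V^\star$ on $\bY$. This buys us a global estimate $\|V\|_{C^1([-1,1]^n)} \le c_2 \|V^\star\|_{C^{1,1}(\bY)}$ from \eqref{eq:VboundC1}, which replaces the wasteful ``extrapolation'' factor from \textbf{Lemma \ref{lem:MatteoBound}} that made the bound in \eqref{eq:PropOCPQM} exponential in $d_\vf$.

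For \eqref{eq:PolyOCPVde-V*}, I would use the triangle inequality together with the fact that $V = V^\star$ on $\bY$:
$$ \|V_{d,\eta} - V^\star\|_{C^1(\bY)} \le \|V_{d,\eta} - V_d\|_{C^1(\bY)} + \|V_d - V\|_{C^1(\bY)}. $$
The first term is the $C^1$-norm of the constant shift in \eqref{eq:DefVdeta}, equal to $\tfrac{c_1}{d}(1+\|\vf\|/\beta) + \eta$; the second is at most $c_1/d$ by \eqref{eq:VVd} (restricted from $[-1,1]^n$ to $\bY$). Summing these two contributions gives exactly \eqref{eq:PolyOCPVde-V*}.

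For \eqref{eq:rhodboundbeta} the argument mirrors the one in the proof of \textbf{Lemma \ref{lem:PerturbOptimalValueFunction}}: I would write
$$ p = \bigl(g - \beta V^\star + \vf \cdot \grad\, V^\star\bigr) + \beta\,(V^\star - V_{d,\eta}) + \vf \cdot \grad\,(V_{d,\eta} - V^\star), $$
observe that the first bracket is $\ge 0$ on $\bX$ by \eqref{eq:HJBIneq}, and then bound the last two summands using \eqref{eq:PolyOCPVde-V*}. The constants in the definition of $V_{d,\eta}$ are tuned precisely so that the cancellation leaves exactly $\beta\eta$.

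The main new ingredient is \eqref{eq:VdeHJBUpperBound}. The idea is to insert $\pm(-\beta V + \vf\cdot\grad\, V)$ into $p$ so that the ``large'' part of $p$ is expressed in terms of the extension $V$, which we control on $[-1,1]^n$. Using $\grad\,V_{d,\eta} = \grad\,V_d$ and the fact that $V_{d,\eta}$ differs from $V_d$ by a constant, one obtains
$$ p = g + \bigl(-\beta V + \vf\cdot\grad\,V\bigr) + \beta(V - V_d) + \vf\cdot\grad\,(V_d - V) + \tfrac{c_1}{d}(\beta + \|\vf\|) + \beta\eta. $$
Bounding in $C([-1,1]^m)$, the ``HJB residual'' $-\beta V + \vf\cdot\grad\,V$ is controlled by $(\beta + \|\vf\|)\|V\|_{C^1([-1,1]^n)} \le c_2(\beta + \|\vf\|)\|V^\star\|_{C^{1,1}(\bY)}$ thanks to \eqref{eq:VboundC1}; the approximation remainder is at most $(\beta + \|\vf\|)\, c_1/d$ via \eqref{eq:VVd}; and the last two constants contribute directly. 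Adding $\|g\|$ and collecting terms produces exactly \eqref{eq:VdeHJBUpperBound}. I do not anticipate a serious obstacle: the whole purpose of \textbf{Lemma \ref{thm:HölderExtension}} here is precisely to replace the exponential factor $C^{d_\vf+1}$ in \eqref{eq:PropOCPQM} by the dimension-only constant $c_2$.
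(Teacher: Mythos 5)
Your proposal is correct and takes essentially the same route as the paper: both defer the first two estimates to the computation of \textbf{Lemma \ref{lem:PerturbOptimalValueFunction}} (applicable verbatim since $V$ restricts to $V^\star$ on $\bY$), and both replace the $\bK$-extrapolation bound of \textbf{Lemma \ref{lem:MatteoBound}} by the extension bound \eqref{eq:VboundC1} when controlling $\|p\|$; your regrouping $p = g + (-\beta V + \vf\cdot\grad V) + \beta(V-V_d) + \vf\cdot\grad(V_d-V) + \mathrm{const}$ is a cosmetic variant of the paper's direct $\|p\| \le \|g\| + \beta\|V_{d,\eta}\| + \|\vf\cdot\grad V_{d,\eta}\|$, producing the same four contributions. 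One nit: saying you will ``bound the last two summands using \eqref{eq:PolyOCPVde-V*}'' to establish \eqref{eq:rhodboundbeta} does not work if taken literally as a magnitude bound --- it would yield $p \ge -(\beta+\|\vf\|)\bigl[(2+\nicefrac{\|\vf\|}{\beta})\nicefrac{c_1}{d}+\eta\bigr]$, a negative lower bound; as you implicitly recognize, one must run the \emph{signed} cancellation from the proof of \textbf{Lemma \ref{lem:PerturbOptimalValueFunction}}, in which the favorable constant shift $\tfrac{c_1}{d}(1+\nicefrac{\|\vf\|}{\beta})+\eta$ in $V_{d,\eta}$ absorbs the two adversarial $\pm\nicefrac{c_1}{d}$ approximation errors and leaves exactly $\beta\eta$.
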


\begin{proof}
\begin{subequations}
    The statements \eqref{eq:PolyOCPVde-V*} and \eqref{eq:rhodboundbeta} follow similarly to \eqref{eq:Vde-V*} and \eqref{eq:VdeHJB} in {\bf Lemma \ref{lem:PerturbOptimalValueFunction}}.
    To show \eqref{eq:VdeHJBUpperBound}, we simply apply the triangle inequality
    \begin{eqnarray}\label{eq:rhoTriangleInequ}
        \|p\| & = & \|g - \beta V_{d,\eta} + \vf \cdot \grad \, V_{d,\eta}\|\notag\\
        & \leq & \|g\| + \beta \|V_{d,\eta}\| + \|\vf \cdot \grad \, V_{d,\eta}\|
    \end{eqnarray}
    and separately bound $\|\beta V_{d,\eta}\|$ and $\|\vf \cdot \grad \, V_{d,\eta}\|$. We begin with $\beta \|V_{d,\eta}\|$
    \begin{eqnarray*}
        \|\beta V_{d,\eta}\| & \leq & \beta\left(\|V\| + \|V_d-V\| + \frac{c_1}{d}\left(1+\frac{\|\vf\|}{\beta}\right) + \eta\right)\\
        & \leq & \beta\left(c_2 \, \|V^\star\|_{C^{1,1}(\bY)} + \frac{c_1}{d} + \frac{c_1}{d}\left(1+\frac{\|\vf\|}{\beta}\right) + \eta\right)\\
        & \leq & \beta \left(c_2 \, \|V^\star\|_{C^{1,1}(\bY)} + \frac{c_1}{d} \left(2 + \frac{\|\vf\|}{\beta}\right) + \eta \right)
    \end{eqnarray*}
    where in the last inequality we used \eqref{eq:VboundC1} and \eqref{eq:VVd}. For bounding $\vf \, \cdot \grad \, V_{d,\eta}$ we use $\grad \, V_{d,\eta}= \grad \, V_d$ and we have
    \begin{eqnarray*}
        \|\vf \, \cdot \grad \, V_{d,\eta}\| & = & \|\vf \, \cdot \grad \, V_{d}\| \\
        &\leq & \|\vf\| \left(\|V\|_{C^{1,1}(\bY)} + \|V_d-V\|_{C^{1,1}(\bY)}\right)\\
        & \leq & \|\vf\| \left(c_2 \, \|V^\star\|_{C^{1,1}(\bY)} + \frac{c_1}{d}\right)
    \end{eqnarray*}
    Putting together in \eqref{eq:rhoTriangleInequ} gives \eqref{eq:VdeHJBUpperBound}.
    \end{subequations}
\end{proof}

As in the previous section, {\bf Lemma \ref{lem:ocpExtBound}} ensures that $V_{d,\eta}$ is an inward-pointing perturbation of $V^\star$. Now, all that remains is to apply an effective version of Putinar's Positivstellensatz from {\bf Theorem \ref{thm:OldBaldi}}.

\begin{thm}[Polynomial rate for optimal control] \label{prop:ocpConvRatePoly} \leavevmode

    Under {\bf Conditions \ref{cond:ocp} and \ref{cond:ocpBoundaryregularity}}, for $\ell \in \N$ large enough it holds
    \begin{equation}
        0 \leq \E_{\mu_0}[V^\star(Y_0)] - \overline{V_{\ell}}(\mu_0) \in \cO \left(\ell^{-\frac{1}{6n\text{\L}}}\right) \; \; \; \text{ as } \ell \to\infty.
    \end{equation}
\end{thm}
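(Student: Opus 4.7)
The plan is to replicate the structure of the proof of \textbf{Corollary \ref{prop:PropOCPConvRate}}, but replacing the application of \textbf{Lemma \ref{lem:MatteoBound}} (which produces the exponential factor $C^{d_{\vf}+1}$ that ultimately spoils the rate) with the estimate \eqref{eq:VdeHJBUpperBound} from \textbf{Lemma \ref{lem:ocpExtBound}}. The latter bounds $\|p\|$ on the full cube $[-1,1]^m$ by a quantity that is bounded by a \emph{constant} for all sufficiently large $d$ and all $\eta \in (0,1]$, because we approximate the smoothly extended value function on $[-1,1]^n$ directly, rather than approximating on $\bY$ and then converting a $\bY$-sup norm into a $[-1,1]^m$-sup norm.

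First, set $d \in \N^\star$, $\eta \in (0,1]$ and form $V_{d,\eta} \in \R_d[\vy]$ as in \eqref{eq:DefVdeta}. By \textbf{Lemma \ref{lem:ocpExtBound}}, the polynomial $p := g - \beta V_{d,\eta} + \vf \cdot \grad\, V_{d,\eta}$ satisfies $\deg(p) \leq d + \deg(\vf) =: d_{\vf}$, $p^\star_\bX \geq \beta\eta$, and, uniformly over $d \geq 1$ and $\eta \in (0,1]$,
\begin{equation*}
    \|p\| \leq \|g\| + c_2\|V^\star\|_{C^{1,1}(\bY)}(\beta + \|\vf\|) + 2c_1(\beta + \|\vf\|) + \beta =: K,
\end{equation*}
with $K$ independent of $d$ and $\eta$. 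Plugging these three bounds into the effective Putinar \textbf{Theorem \ref{thm:OldBaldi}} gives that $V_{d,\eta}$ is feasible for \eqref{eq:hjbsos} as soon as
\begin{equation*}
    \ell \geq \gamma \, d_{\vf}^{3.5m\text{\L}} \left(\frac{K}{\beta\eta}\right)^{2.5m\text{\L}}.
\end{equation*}

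Second, balance the parameters by choosing $\eta = 1/d$. The $C^1(\bY)$ estimate \eqref{eq:PolyOCPVde-V*} yields
\begin{equation*}
    0 \leq \E_{\mu_0}[V^\star(Y_0)] - \overline{V_{\ell}}(\mu_0) \leq \|V^\star - V_{d,\eta}\|_{C^1(\bY)} \leq \left(2 + \tfrac{\|\vf\|}{\beta}\right)\frac{c_1}{d} + \frac{1}{d} =: \frac{C_\star}{d},
\end{equation*}
so the approximation error decays like $1/d$. Meanwhile, with $\eta = 1/d$, the sufficient hierarchy level becomes
\begin{equation*}
    \ell \geq \gamma \, (d+\deg(\vf))^{3.5m\text{\L}} \left(\tfrac{K}{\beta}\right)^{2.5m\text{\L}} d^{2.5m\text{\L}} = \cO\!\left(d^{6m\text{\L}}\right).
\end{equation*}

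Third, invert. For $\ell$ large enough, taking $d := \lfloor (\ell/\gamma')^{1/(6m\text{\L})}\rfloor$ (with $\gamma'$ an appropriate constant absorbing the prefactors above) makes the feasibility bound on $\ell$ hold, and hence
\begin{equation*}
    0 \leq \E_{\mu_0}[V^\star(Y_0)] - \overline{V_{\ell}}(\mu_0) \leq \frac{C_\star}{d} \in \cO\!\left(\ell^{-\frac{1}{6m\text{\L}}}\right),
\end{equation*}
which is the claim.

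The only subtle point, and the one where \textbf{Condition \ref{cond:ocpBoundaryregularity}} truly earns its place, is to ensure the uniform bound on $\|p\|$: without the smooth extension provided by \textbf{Lemma \ref{thm:HölderExtension}} one has no way to approximate $V^\star$ directly on the ambient cube $[-1,1]^n$, and one is forced back into the exponential bound of \textbf{Lemma \ref{lem:MatteoBound}}. The book-keeping is otherwise routine; the exponent $6m\text{\L} = (3.5 + 2.5)m\text{\L}$ comes transparently from the product of the two factors $d^{3.5m\text{\L}}$ (degree) and $d^{2.5m\text{\L}}$ (from $1/\eta = d$) in the effective Positivstellensatz bound.
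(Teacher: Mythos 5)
Your proposal is correct and follows essentially the same route as the paper's own proof: smoothly extend $V^\star$ via Condition \ref{cond:ocpBoundaryregularity} and Lemma \ref{lem:ocpExtBound} to get a $d$- and $\eta$-independent bound on $\|p\|$, then choose $\eta = 1/d$ and apply the effective Positivstellensatz Theorem \ref{thm:OldBaldi}, yielding the product $d^{3.5m\text{\L}}\cdot d^{2.5m\text{\L}}$ and hence the $\cO(\ell^{-1/6m\text{\L}})$ rate. The only cosmetic difference is in how the uniform constant bounding $\|p\|$ is written ($K$ versus the paper's $c_3$, obtained by bounding the $d$- and $\eta$-dependent remainder terms by $1$ for $d$ large); the argument is otherwise identical.
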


\begin{proof}\begin{subequations}

    We use the notation and constants from {\bf Lemma \ref{lem:ocpExtBound}}. Let $V_{d,\eta}$ be as in \eqref{eq:DefVdeta}. For $d\in \N$ we choose $\eta = \eta_d := \frac{1}{d}$. Let $d_0 \in \N$ such that
    \begin{equation}\label{eq:ocpd0}
        \left(2+\frac{\|\vf\|}{\beta}\right) \frac{c_1}{d_0} + \eta_{d_0} \quad , \quad  2\frac{c_1}{d_0} \left(\beta+ \|\vf\| \right) + \beta \eta_{d_0} \leq 1.
    \end{equation}
    By monotonicity in $d$, both terms on the left-hand side in \eqref{eq:ocpd0} are bounded by $1$ for all $d\geq d_0$. From {\bf Lemma \ref{lem:ocpExtBound}},
    we have
    \begin{equation}\label{eq:VdetaEps}
        \|V_{d,\eta} - V^\star\|_{C^1(\bY)} \in \cO\left(\frac{1}{d}\right) \; \; \; \text{ as } d\to\infty
    \end{equation}
    and for $p := g - \beta V_{d,\eta} + \vf \cdot \grad \, V_{d,\eta}$ it holds
    \begin{equation}\label{eq:Vdebeta}
        p \geq \frac{\beta}{d} \; \text{ on } \bY \times \bU
    \end{equation}
    and
    \begin{equation}\label{eq:VdeHJBBound}
        \|p\| \leq \|g\| + c\|V^\star\|_{C^{1,1}(\bY)} \left( \beta  + \|\vf\| \right) + 1 =: c_3.
    \end{equation}
    Note that the constant $c_3$ is independent of $d$ and the choice of extension $V$. Inserting \eqref{eq:Vdebeta} and \eqref{eq:VdeHJBBound} into {\bf Theorem \ref{thm:OldBaldi}}, we get that $V_{d,\eta_d}$ is feasible for \eqref{eq:hjbsos} for any $\ell \in \N$ with
    \begin{equation}\label{eq:orderl}
        \ell \geq \gamma(n,\vh) \deg(p)^{3.5n\text{\L}} \left(\frac{c_3d}{\beta}\right)^{2.5n\text{\L}}.
    \end{equation}
    To finalize the proof, recall that $\deg(p) \leq d + \deg(\vf) \in \cO(d)$. Thus, for given $\ell \in \N$ (large enough), we choose the largest $d = d_\ell\in \N$ (with $d\geq d_0$) such that \eqref{eq:orderl} is satisfied. By \eqref{eq:orderl}, such $d_\ell$ is of order $\ell^{\frac{1}{6n\text{\L}}}$ and we get
    \begin{eqnarray*}
        |\E_{\mu_0}[V^\star(Y_0)] - \overline{V_{\ell}}(\mu_0)| & \leq & \displaystyle \int |V^\star(\vy_0) - V_{d_\ell,\eta_{d_\ell}}(\vy_0)| \; \od\mu_0(\vy_0) \\
        & \leq & \|V_{d_\ell,\eta_{d_\ell}} - V^\star\|_\infty^\bY\\
        & = & \|V_{d_\ell,\eta_{d_\ell}} - V^\star\|_{C^1(\bY)} \in \cO\left(\frac{1}{d_\ell}\right) \in \cO\left( \ell^{-\frac{1}{6n\text{\L}}}\right) \; \; \; \text{ as } \ell\to\infty.
    \end{eqnarray*}
    This shows the statement.   
\end{subequations}  
\end{proof}

\begin{rem}[Relaxing the regularity assumption on $V^\star$] \leavevmode

    The same arguments in the proof of {\bf Theorem \ref{prop:ocpConvRatePoly}} work still for $V^\star$ with slightly less regularity, namely, for  $V^\star\in C^{1,a}(\bY)$ and $\bY$ with $C^{1,a}$ boundary for some $a \in (0,1)$. The convergence rate then takes the form
    \begin{equation*}
        \E_{\mu_0}[V^\star(Y_0)] - \overline{V_{\ell}}(\mu_0) \in \cO \left(\ell^{-\frac{1}{2.5n\text{\L} + 3.5n\nicefrac{\text{\L}}{a}}}\right) \; \; \; \text{ as } \ell\to\infty
    \end{equation*}
\end{rem}

\subsection{Exit location of stochastic processes}\label{sec:ExitLocation}

In this example, we apply our framework to \cite{henrion2023moment}, in which the exit location of stochastic processes is computed by a function LP~\eqref{eq:smp}. We recall the setting from \cite{henrion2023moment}. Consider a stochastic differential equation
\begin{equation}\label{eq:StochasticODE}
   dX_t = \vf_0(X_t) \; dt + \mF(X_t) \; dB_t, \; \; X_0 = \vx_0
\end{equation}
for $\vf_0 = (f_{0i})_i :\R^n \rightarrow \R^n$, $\mF = (f_{ij})_{i,j} :\R^n \rightarrow \R^{n \times n_2}$, a deterministic initial condition $\vx_0$ and $(B_t)_{t \geq 0}$ a $n_2$-dimensional Brownian motion. The SDE (\ref{eq:StochasticODE}) is equipped with an open, bounded constraint set $\bX \subset \R^n$ and for a given function $g:\partial \bX \rightarrow \R$, the expected exit value for $\vx \in \bX$ is given by
\begin{equation}\label{eq:exitvalue}
    v^\star(\vx_0) := \mathbb{E}(g(X_{\tau}))
\end{equation}
where $\tau = \inf \{t\geq 0 \; ; \, X_t \in \partial\bX\}$ is the first time at which the process $(X_t)_t$ starting at $X_0 = \vx_0$ hits $\partial \bX$.

In \cite{henrion2023moment}, the following assumptions were made
\begin{cond}\label{cond:SDEDidier} \leavevmode
\begin{enumerate}
    \item[{\bf \ref*{cond:SDEDidier}.1}] It holds $\overline{\bX} = \bS (\vh) \subset \bK = [-1,1]^n$ for some $\vh \in \R[\vx]^r$.
    \item[{\bf \ref*{cond:SDEDidier}.2}] The boundary $\partial\bX$ is smooth and is represented by $\partial\bX = \bS (\vh_\partial)$ for some $\vh_\partial \in \R[\vx]^{r_\partial}$.
    \item[{\bf \ref*{cond:SDEDidier}.3}] We assume $g,f_{0i},f_{ij} \in \R[\vx]$ for $i = 1,\ldots,m$ and $j = 1,\ldots,n$.
    \item[{\bf \ref*{cond:SDEDidier}.4}] The matrix $\mF(\vx) \mF(\vx)^\top$ is positive definite for all $\vx\in \overline{\bX}$.
\end{enumerate}
\end{cond}

\begin{rem}
    The case when the boundary $\partial \bX$ decomposes into several disjoint components $\partial \bX = \bX^\partial_1 \cup \ldots \cup \bX^\partial_l$ is treated similarly in \cite{henrion2023moment}. By \textbf{Condition \ref{cond:SDEDidier}.2}, we restrict to the notationally simpler case of $\partial\bX = \bS (\vh_\partial)$.
\end{rem}

Under the above conditions there exists a unique solution $X_t$ of (\ref{eq:StochasticODE}) for $t\leq \tau$, see \cite{henrion2023moment,evans2012introduction}.

In \cite{henrion2023moment}, from Dynkin's formula, the following functional LP for the value $v^\star(\vx_0)$ of the exit value (\ref{eq:exitvalue}) is derived
\begin{equation}\label{eq:LPDidierSDE}
    v^\star(\vx_0) = \max\limits_{v{\in C^2(\bX)}} v(\vx_0) \quad \text{s.t.} \quad \cL v\leq 0 \text{ on } \; \bX, \; \; \; v\leq g \text{ on } \;\partial \bX
\end{equation}
with moment-SoS hierarchy, for $\ell \in \N$,
\begin{equation}\label{eq:HierarchyLPDidier}
    \begin{tabular}{ccc}
        $v^\star_\ell(\vx_0) :=$ & $\sup\limits_{v \in \R[\vx]}$ & $v(\vx_0)$\\
         & s.t. & $-\cL v \in \cQ_\ell(\vh)$\\
         & & $g-v \in \cQ_\ell(\vh_\partial)$,
    \end{tabular}
\end{equation}
where $\cL$ is the second-order partial differential operator
\begin{equation}\label{eq:DefLGenerator}
    \cL v (\vx) := -\frac{1}{2}\sum\limits_{i,j = 1}^n a_{ij}(\vx) \frac{\partial^2 v}{\partial\vx_i\partial\vx_j}(\vx) + \sum\limits_{i = 1}^n f_{0i} (\vx) \frac{\partial v}{\partial \vx_i}(\vx)
\end{equation}
for $(a_{ij}(\vx))_{i,j = 1,\ldots,n} = \mF(\vx) \mF(\vx)^\top$.

\begin{rem}\label{rem:ExtensionSDEExit}
    In the spirit of \textbf{Remark~\ref{rem:NoMinimizer}}, the LP~\eqref{eq:LPDidierSDE} {is a relaxation of the following LP
    \begin{equation*}
        v^\star(\vx_0) = \sup\limits_{v\in\cP(\bX)} v(\vx_0) \quad \text{s.t.} \quad \cL v\leq 0 \text{ on } \; \bX, \; \; \; v\leq g \text{ on } \;\partial \bX.
    \end{equation*}}
    {The above LP is a function LP~\eqref{eq:LPGeneral}} for $\cY = \cP(\bX)$ with topology induced by $C^2(\bX)$, the sets $\bX_i$ for $i = 1,2$ given by ${\bY = \color{black}\bX_1} := \bX$ and $\bX_2 :=\partial \bX$, the linear form $T$ given by $\langle T,w\rangle := w(\vx_0)$, the linear operator $\cA$ given by $\cA \, w:= (-\cL w, -w\big|_{\partial \bX})$, and $\vg(\vx) := (0,g(\vx))$ for all $\vx \in \bX$. {The LP~\eqref{eq:LPDidierSDE} results, as in \textbf{Remark~\ref{rem:NoMinimizer}}, as a relaxation by choosing $\hat{\cY} = C^2(\bX)$ (still equipped with the induced topology from $C^2(\bX)$).} As we discuss below, the relaxed LP~\eqref{eq:LPDidierSDE} has $v^\star$ as a minimizer.
\end{rem}

The function $v^\star$ from~\eqref{eq:LPDidierSDE} is the solution of the following boundary value problem
\begin{eqnarray}\label{eq:Solv*}
\begin{tabular}{cccll}
     $\cL v$ & $=$ & $0$ & \text{on} & $\bX$\\ 
     $v$ & $=$ & $g$ & \text{on} & $\partial\bX$.
\end{tabular}
\end{eqnarray}
Thus, the question about existence and regularity of minimizers of (\ref{eq:LPDidierSDE}) is transferred to the question about existence and regularity of solutions of (\ref{eq:Solv*}). Fortunately, the answer here is positive, see \cite{gilbarg1977elliptic,urbas1996lecture,henrion2023moment}. Namely, under {\bf Condition \ref{cond:SDEDidier}}, there exists a unique solution $v \in C^{\infty}(\bX)$ of (\ref{eq:Solv*}). Next, we investigate an inward-pointing direction. Therefore, we recall that for $\phi \in C^{\infty}(\bX)$ there exists a unique solution $u_\phi \in C^{\infty}(\overline{\bX})$ of
\begin{eqnarray}\label{eq:PoissionProbSol}
\begin{tabular}{cccll}
     $\cL u_\phi$ & $=$ & $\phi$ & \text{in} & $\bX$\\ 
     $u_\phi$ & $=$ & $0$ & \text{on} & $\partial\bX$.
\end{tabular}
\end{eqnarray}
To construct an inward-pointing direction let $\phi(\vx) := -1$ for all $\vx \in \overline{\bX}$ and $u_\phi$ be a corresponding solution to (\ref{eq:PoissionProbSol}). Let $0 < \eta,\theta \in \R$, we define the function
\begin{equation}\label{eq:DefInwardDirectionSDE}
    v := v^\star + \theta (u_\phi - \eta).    
\end{equation}
In the following lemma, we show that $v$ is indeed strictly feasible for all $\theta > 0$; in other words $u_\phi - \eta$ is an inward-pointing direction for $v^\star$.

\begin{lem}
    For all $\theta > 0$, the function $v$ from (\ref{eq:DefInwardDirectionSDE}) is strictly feasible for (\ref{eq:LPDidierSDE}).    
\end{lem}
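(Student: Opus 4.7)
The plan is to verify the two strict feasibility conditions for (\ref{eq:LPDidierSDE}) directly: $\cL v < 0$ on $\bX$ and $v < g$ on $\partial \bX$. Both checks should reduce to one-line computations by linearity of $\cL$ and by exploiting the defining PDEs of $v^\star$ and $u_\phi$. I do not expect any substantive obstacle; the lemma is essentially a calculation whose content is that the combination $u_\phi - \eta$ encodes both an ``interior push'' (making $\cL$ strictly negative) and a ``boundary push'' (making $v$ strictly below $g$ on $\partial\bX$).

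For the interior constraint, I would use linearity of $\cL$ and the fact that, since $\cL$ as defined in (\ref{eq:DefLGenerator}) contains only first and second order derivatives (no zeroth order term), it annihilates constants. Hence
\begin{equation*}
\cL v \;=\; \cL v^\star \;+\; \theta\,\cL u_\phi \;-\; \theta\,\cL\eta \;=\; 0 \;+\; \theta\cdot(-1) \;-\; 0 \;=\; -\theta,
\end{equation*}
using $\cL v^\star = 0$ from (\ref{eq:Solv*}) and $\cL u_\phi = \phi = -1$ from (\ref{eq:PoissionProbSol}). Since $\theta > 0$, this gives $\cL v = -\theta < 0$ on $\bX$, in fact with a uniform strictly negative margin.

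For the boundary constraint, I would restrict (\ref{eq:DefInwardDirectionSDE}) to $\partial\bX$ and use the boundary conditions $v^\star|_{\partial\bX} = g$ (from (\ref{eq:Solv*})) and $u_\phi|_{\partial\bX} = 0$ (from (\ref{eq:PoissionProbSol})) to obtain
\begin{equation*}
v\big|_{\partial\bX} \;=\; g \;+\; \theta\,(0 - \eta) \;=\; g \;-\; \theta\eta.
\end{equation*}
Since $\theta,\eta > 0$, it follows that $v < g$ on $\partial\bX$, again with a uniform gap of $\theta\eta$. Together with the interior estimate, this shows $v$ is strictly feasible for every $\theta > 0$, concluding the proof. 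The regularity $v^\star, u_\phi \in C^\infty(\overline\bX)$ recalled right before the lemma ensures that all objects make sense pointwise, and the uniform margins $\theta$ and $\theta\eta$ will later be the quantitative ``inward-pointing'' data feeding into the convergence rate analysis, exactly in the spirit of \textbf{Condition \ref{condition:Inpointing}} and \textbf{Condition \ref{cond:GenMethod}.3}.
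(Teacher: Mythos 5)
Your proof is correct and follows exactly the same two-step verification as the paper: use linearity of $\cL$ and the fact that it annihilates constants to get $\cL v = -\theta < 0$ on $\bX$, and use the boundary conditions $v^\star = g$, $u_\phi = 0$ on $\partial\bX$ to get $v = g - \theta\eta < g$ there. Nothing to add.
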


\begin{proof}
    On $\bX$ it holds
    \begin{eqnarray*}
        \cL v & = & \cL  (v^\star + \theta (u_\phi - \eta)) = \cL v^\star + \theta \cL (u_\phi - \eta) = \cL v^\star + \theta \cL  u_\phi = 0 +\theta (-1) = - \theta < 0. 
    \end{eqnarray*}
    For $\vx \in \partial \bX$ we have
    \begin{equation*}
        v(\vx) = v^\star(\vx) + \theta (u_\phi(\vx) - \eta) = g(\vx) + \theta (0 - \eta) = g(\vx) - \theta \eta < g(\vx).
    \end{equation*}
    This shows that $v$ is strictly feasible for (\ref{eq:LPDidierSDE}).
\end{proof}

The cost of $v$ for the infinite dimensional LP (\ref{eq:LPDidierSDE}) is given by $v(\vx_0) = v^\star(\vx_0) + \theta (u_\phi(\vx_0) - \eta)$.

Following our procedure from Section \ref{sec:method}, we obtain the following convergence rate for the moment-SoS hierarchy for (\ref{eq:LPDidierSDE}) from \cite{henrion2023moment}.

\begin{thm}[Convergence rate for exit location of stochastic systems]\label{thm:SDEExitLocationConvRate} \leavevmode 

    Let {\bf Assumption \ref{asm:poprateOld}} hold for $\bX$ and $\partial \bX$, and let \L{} (resp. \L$_\partial$) be the \L{}ojaciewicz exponent of $\vh$ (resp. $\vh_\partial$). Then, defining $\widehat{\text{\L}} := \max\{\text{\L},\text{\L}_\partial\}$, under {\bf Condition \ref{cond:SDEDidier}}, it holds for $\ell \in \N$ large enough that 
    \begin{equation}
        v^\star(\vx_0) - v^\star_\ell(\vx_0) \in \cO \left( \ell^{-\frac{1}{(2.5 + s)n\widehat{\text{\L}}}}\right) \text{ for any } s > 0.
    \end{equation}
\end{thm}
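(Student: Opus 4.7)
The plan is to adapt the five-step procedure of the proof of \textbf{Theorem \ref{prop:ocpConvRatePoly}} to the exit-location LP \eqref{eq:LPDidierSDE}. The key enabling regularity fact is that under \textbf{Condition \ref{cond:SDEDidier}}, classical elliptic theory gives $v^\star \in C^\infty(\overline{\bX})$ (the unique solution of \eqref{eq:Solv*}) and, likewise, $u_\phi \in C^\infty(\overline{\bX})$ for the solution of \eqref{eq:PoissionProbSol} with $\phi \equiv -1$.

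Fix an integer $M \in \N^\star$, to be tuned at the end. Applying \textbf{Lemma \ref{thm:HölderExtension}} to $v^\star$ and $u_\phi$ (admissible at arbitrary order since $\partial\bX$ is smooth), I extend them to $C^{M+2,1}$ functions on a slight open thickening of $\bK := [-1,1]^m \supset \overline{\bX}$. Invoking \textbf{Theorem \ref{thm:CkApprox}} with $k = M+1$ on this thickening and summing over $|\balpha|\leq 2$ yields polynomials $\tilde v_d, \tilde u_d \in \R_d[\vx]$ with
\[
\|\tilde v_d - v^\star\|_{C^2(\bK)} + \|\tilde u_d - u_\phi\|_{C^2(\bK)} \leq \frac{c}{d^M}
\]
for a constant $c$ independent of $d$. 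Following the inward-pointing construction \eqref{eq:DefInwardDirectionSDE}, set $v_d := \tilde v_d + \theta_d\,(\tilde u_d - \eta) \in \R_d[\vx]$, with $\eta > 0$ a fixed constant and $\theta_d := A/d^M$ where $A$ is chosen large enough (depending on $\cL$ and $c$). A triangle-inequality computation analogous to \textbf{Lemma \ref{lem:PerturbOptimalValueFunction}}, using $\cL v^\star = 0$ and $\cL u_\phi = -1$ on $\bX$, as well as $v^\star = g$ and $u_\phi = 0$ on $\partial\bX$, then yields
\[
-\cL v_d \geq \tfrac{\theta_d}{2} \text{ on } \bX, \qquad g - v_d \geq \tfrac{\theta_d\,\eta}{2} \text{ on } \partial\bX.
\]
Convergence of $\tilde v_d, \tilde u_d$ in $C^2(\bK)$ furthermore bounds $\|-\cL v_d\|$ and $\|g - v_d\|$ (sup on the whole cube $\bK$) uniformly in $d$ by a constant $C_0$; this extension-based control is precisely what sidesteps the exponential-in-$d$ blow-up of \textbf{Lemma \ref{lem:MatteoBound}}.

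Applying the effective Putinar Positivstellensatz \textbf{Theorem \ref{thm:OldBaldi}} separately to the two constraint polynomials (with \text{\L{}}ojasiewicz exponent \text{\L} for $\vh$, respectively \text{\L}$_\partial$ for $\vh_\partial$) and using $\deg(-\cL v_d), \deg(g - v_d) \in \cO(d)$, both resulting degree thresholds fit under a common bound of the form $\ell \geq \gamma'\,d^{(3.5 + 2.5M)\,m\,\widehat{\text{\L}}}$. Hence $v_d$ is feasible for \eqref{eq:HierarchyLPDidier} as soon as this holds, yielding
\[
0 \leq v^\star(\vx_0) - v^\star_\ell(\vx_0) \leq v^\star(\vx_0) - v_d(\vx_0) \in \cO(d^{-M}).
\]
Inverting the threshold gives $d \asymp \ell^{1/((3.5+2.5M)\,m\,\widehat{\text{\L}})}$, and hence a rate $\cO(\ell^{-M/((3.5+2.5M)\,m\,\widehat{\text{\L}})}) = \cO(\ell^{-1/((2.5 + 3.5/M)\,m\,\widehat{\text{\L}})})$. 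For any $s > 0$, choosing $M \geq \lceil 3.5/s \rceil$ delivers the announced rate $\cO(\ell^{-1/((2.5+s)\,m\,\widehat{\text{\L}})})$.

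The main obstacle lies in the strict feasibility / cost trade-off: the perturbation parameter $\theta_d$ must simultaneously (i) dominate the $C^2$ approximation error (so that $\cL$, involving second derivatives, does not destroy strict positivity on $\bX$), yet (ii) remain small enough to keep $v_d(\vx_0)$ within $\cO(d^{-M})$ of $v^\star(\vx_0)$. The additive constant $\eta$ in the inward direction \eqref{eq:DefInwardDirectionSDE} decouples boundary strict feasibility from cost control, furnishing slack for the inequality on $\partial\bX$ without having to inflate $\theta_d$. A secondary subtlety is that the bulk constraint lives on $\bS(\vh)$ and the boundary constraint lives on $\bS(\vh_\partial)$, potentially with distinct \text{\L{}}ojasiewicz exponents; replacing them both by their maximum $\widehat{\text{\L}}$ allows a unified effective Putinar bound.
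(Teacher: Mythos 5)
Your proposal is correct and follows essentially the same route as the paper's proof: extend $v^\star$ and $u_\phi$ to the hypercube via the Hölder extension lemma, approximate by degree-$d$ polynomials with $C^2$ error $\cO(d^{-k})$, perturb along the inward-pointing direction $u_\phi - \eta$ with $\theta_d \asymp d^{-k}$ to obtain strict feasibility while keeping the sup-norm on the cube bounded uniformly in $d$, apply the effective Putinar Positivstellensatz separately on $\bX$ and $\partial\bX$, and tune $k$ to $s$. The only cosmetic deviations are your use of a fixed $\eta$ (the paper uses a $d$-dependent $\eta_d$ that remains $\cO(1)$) and a simpler closed form for $\theta_d$, neither of which changes the asymptotics.
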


\begin{proof}\begin{subequations}
    Let $s > 0$, $k := \lceil\nicefrac{3.5}{s}\rceil$, $v,u \in C^{k+2}(\bK)$ be extensions of $v^\star$ and $u_\phi$ according to {\bf Lemma \ref{thm:HölderExtension}} with
    \begin{equation}
        \|v\|_{C^{k+2}(\bK)} \leq c_2 \|v\|_{C^{k+2}(\bX)} \quad \text{and} \quad \|u\|_{C^{k+2}(\bK)} \leq c_2 \|u_\phi\|_{C^{k+2}(\bX)}
    \end{equation}
    for some constant $c_2 \in \R$. For $d\in \N$, by {\bf Theorem \ref{thm:CkApprox}}, let $p_q,q_d \in \R_d[\vx]$ with
    \begin{equation}
        \|v - p_d\|_{C^{k+2}(\bK)}, \|u - q_d\|_{C^{k+2}(\bK)} \leq \frac{c_1}{d^k}
    \end{equation}
    for some constant $c_1 \in \R$. Further, we set
    \begin{equation*}
        A := \sup\limits_{\vx \in \bK} \frac{1}{2}\sum\limits_{i,j = 1}^n |a_{ij}(\vx)| + \sum\limits_{i = 1}^n |f_{0i}(\vx)|.
    \end{equation*}
    We define $\theta_d,\eta_d > 0$ for large enough $d\in \N$ as
    \begin{equation}\label{eq:DefThetaEtaSDE}
        \theta_d := \frac{2c_2A}{d^k (1-\nicefrac{c_1A}{d^k})} \in \mathcal{O}(d^{-k}) \quad \text{and} \quad \eta_d := \frac{c_2}{d^k}(1 + 2 \theta_d^{-1}) \in \mathcal{O}(1).
    \end{equation}
    Motivated by (\ref{eq:DefInwardDirectionSDE}), we define $v_d$ by
    \begin{equation}
        v_d := p_d + \theta_d (q_d - \eta_d)
    \end{equation}
    and verify that $v_d$ is feasible for (\ref{eq:HierarchyLPDidier}) for $\ell \in \N$ to be determined. We first bound $v_d$ on $\bK$. On $\bK$ we have for large enough $d\in \N$
    \begin{equation*}
        \begin{array}{cccccccccccc}
            |v_d| & \leq & |v| & + & |v-p_d| & + & \theta_d (|u| + |u-q_d| + \eta_d)& \\
            & \leq & \|v\|_\infty^{\bK} & + & \|v-p_d\|_\infty^{\bK} & + & \theta_d (\|u\|_\infty^{\bK} + \|u-q_d\|_\infty^{\bK} + \eta_d)& \\
            & \leq & \|v\|_{C^{2}(\bK)} & + & \|v-p_d\|_{C^{2}(\bK)} & + & \theta_d (\|u\|_{C^{2}(\bK)} + \|u-q_d\|_{C^{2}(\bK)} + \eta_d) & \\
            & \leq & c_2 \|v^\star\|_{C^{k+2}(\bX)} & + & \frac{c_1}{d^k} & + & \theta_d c_2 \|u_\phi\|_{C^{k+2}(\bX)} + \theta_d \frac{c_1}{d^k} + \theta_d \eta_d\\
            & \leq & c_2 \|v^\star\|_{C^{k+2}(\bX)} & + & 1 & + & \theta_d c_2 \|u_\phi\|_{C^{k+2}(\bX)} + 1 + 1\\
            & =: & C_1.
        \end{array}
    \end{equation*}
    Where only in the second but last line we used that $d\in \N$ is large enough such that $\frac{c_1}{d^k},\theta_d \frac{c_1}{d^k},\theta_d \eta_d \leq 1$.
    Similarly, we can bound $\cL v_d$ on $\bK$. Note first that for all $\vx \in \bK$ we have for all $w \in C^2(\bK)$
    \begin{eqnarray*}
        |\cL w(\vx)| & = & \left| \frac{1}{2}\sum\limits_{i,j = 1}^n a_{ij}(\vx) \partial_i \partial_j w(\vx) + \sum\limits_{i = 1}^n f_{0i} (\vx) \partial_i w(\vx)\right|\\
        & \leq & \left(\frac{1}{2}\sum\limits_{i,j = 1}^n |a_{ij}(\vx)| + \sum\limits_{i = 1}^n |f_{0i}(\vx)|\right) \|w\|_{C^2(\bK)} \leq A \|w\|_{C^2(\bK)}.
    \end{eqnarray*}
    For large enough $d\in \N$ we get on $\bK$
    \begin{eqnarray*}
        |\cL v_d| & \leq & |\cL v| + |\cL (v-p_d)| + \theta_d( |\cL u| + |\cL (u - q_d)|)\\
        & \leq & A \|v\|_{C^2(\bK)} + A \|v - p_d\|_{C^2(\bK)} + \theta_d A \|u\|_{C^2(\bK)} + \theta_d A \|u - q_d\|_{C^2(\bK)}\\
        & \leq & A c_2\|v^\star\|_{C^{k+2}(\bX)} + A \frac{c_1}{d^k} + \theta_d A {c_2} \|u_\phi\|_{C^{k+2}(\bX)} + \theta_d A \frac{c_1}{d^k}\\
        & \leq & A c_2\|v^\star\|_{C^{k+2}(\bX)} + 3 =: C_2.
    \end{eqnarray*}
    Next, we verify the strict feasibility of $v_d$ for (\ref{eq:HierarchyLPDidier}). It holds on $\bX$
    \begin{eqnarray}\label{eq:SDELowerBound1}
        \cL v_d & = & \underbrace{\cL v^\star}_{= 0} + \cL (p_d - v^\star) + \theta_d (\underbrace{\cL u_\phi}_{= -1} + \cL (q_d-u_\phi))\notag\\
        & \leq & A \|v^\star - p_d\|_{C^{k+2}(\bX)} - \theta_d + \theta_d A \|u_\phi - q_d\|_{C^{k+2}(\bX)}\notag\\
        & \leq & A \|v - p_d\|_{C^{k+2}(\bX)} - \theta_d + \theta_d A \|u - q_d\|_{C^{k+2}(\bX)}\notag\\
        & \leq & \theta_d (-1 + A\frac{c_1}{d^k}) + \frac{Ac_1}{d^k}\notag\\
        & \overset{(\ref{eq:DefThetaEtaSDE})}{=} & - \frac{Ac_1}{d^k}
    \end{eqnarray}
    and on $\partial \bX$ it holds
    \begin{eqnarray}\label{eq:SDELowerBound2}
        v_d & = & \underbrace{v^\star}_{= g} + p_d - v^\star + \theta_d(\underbrace{u_\phi}_{= 0} + q_d - u_\phi -\eta_d)\notag\\
        & \leq & g + \|v^\star - p_d\|_\infty^{(\partial\bX)} + \theta_d \|u_\phi - q_d\|_\infty^{(\partial\bX)} -\theta_d \eta_d\notag\\
        & \leq & g + \theta_d(1+ \frac{c_1}{d^k} - \eta_d)\notag\\
        & \overset{(\ref{eq:DefThetaEtaSDE})}{=} & g-\frac{c_2}{d^k}.
    \end{eqnarray}
    Applying {\bf Theorem \ref{thm:OldBaldi}}, we get that $v_d$ is feasible for $\ell \in \N$ with
    \begin{equation}\label{eq:SDEellBound}
        \ell \geq d^{3.5n\widehat{\text{\L}}} \max\left\{\gamma(n,\vh) \left(\frac{C_2 d^k}{Ac_1}\right)^{2.5m \text{\L}},\gamma(n,\vh_\partial) \left(\frac{C_1 d^k}{c_2}\right)^{2.5n \text{\L}_\partial} \right\} \in \mathcal{O}\left(d^{(3.5+2.5k)n\widehat{\text{\L}}}\right).
    \end{equation}
    For such $\ell$ the optimal value $v^\star_\ell$ is at least $v_d(\vx_0)$; hence we get
    \begin{eqnarray}\label{eq:SDEConvRated}
        v^\star(\vx_0) - v^\star_\ell(\vx_0) & \leq & (v^\star - v^\star - (p_d - v^\star) - \theta_d(u_\phi + q_d - u_\phi -\eta_d))(\vx_0)\notag\\
        & \leq & \frac{c_1}{d^k} + \theta_d (\|u_\phi\|_\infty^\bX + \frac{c_1}{d^k} + \eta_d) \in \mathcal{O}\left(d^{-k}\right).
    \end{eqnarray}
    Defining $\epsilon_d := d^{-k}$, \eqref{eq:SDEellBound} yields that $v_d$ is feasible for $\ell \geq \ell_b \in \cO\left( \epsilon_d^{-\frac{1}{\left(\nicefrac{3.5}{k}+2.5\right)n\widehat{\text{\L}}}}\right)$ so that \eqref{eq:SDEConvRated} ensures that $|v^\star(\vx_0) - v^\star_{\ell_b}(\vx_0)| \leq \epsilon_d \in \cO\left(\ell_b^{-\frac{1}{\left(\nicefrac{3.5}{k}+2.5\right)n\widehat{\text{\L}}}}\right) \subset \cO\left(\ell_b^{-\frac{1}{\left(s+2.5\right)n\widehat{\text{\L}}}}\right)$, which is the announced result.
\end{subequations}
\end{proof}

\section{Application: Volume computation} \label{sec:vol}

In this section, we analyze the moment-SoS hierarchy for computing the volume $\lambda(\bX)$ of a bounded basic semi-algebraic set
$$ \bX := \bS(\vh) = \{\vx \in \R^n \; : h_1(\vx) \geq 0, \ldots, h_r(\vx) \geq 0\} \subset \bB $$
with $r \geq 1$ integer and $h_1,\ldots,h_r \in \R[\vx]$. 

A standard moment-SoS hierarchy method {is discussed in~\cite{korda2018convergence}, where a bad convergence behavior is highlighted} both in practice and in theory, due to a Gibbs phenomenon occurring in the SoS approximations. An alternative formulation was proposed in~\cite{lasserre2017stokes}, with much better numerical behavior, which was recently supported by a qualitative analysis in~\cite{tacchi2023stokes}, showing that no Gibbs phenomenon occurs in this improved formulation. In this section, we complement the existing qualitative analysis with a first quantitative analysis of how much better the upper bounds on the convergence rate are in the improved formulation. 

\subsection{The standard approach}\label{sec:VolumeStandard}

The standard moment-SoS approach to numerically solve the volume problem is discussed in detail in~\cite{tacchi2023stokes}. The method consists of formulating a GMP whose optimal solution is $\lambda(\bX)$, after which one numerically approximates this optimal solution using the moment-SoS hierarchy.
The LPs read
\begin{center} \begin{subequations}
\begin{minipage}{0.49\textwidth}
\begin{align}\label{eq:volp}
\lambda(\bX) = \sup\limits_{\mu\in\cM(\bX)} & \displaystyle\int 1 \; \od\mu \notag \\
\st \; &
\mu \in \cM(\bX)_+\\
& \lambda_\bY - \hat{\mu} \in \cM(\bY)_+ \notag 
\end{align}
\end{minipage}
\begin{minipage}{0.49\textwidth}
\begin{align}\label{eq:vold}
\lambda(\bX) = \inf\limits_{w \in \cP(\bY)} & \displaystyle\int w \; \od\lambda_\bY \notag \\
\st \; & w|_{\bX} - 1 \in \cC(\bX)_+\\
& w \in \cC(\bY)_+ \notag
\end{align}
\end{minipage}
\end{subequations} \end{center}
where $\bY$ contains $\bX$ and is an Archimedean basic-semialgebraic set, $\lambda_\bY$ denotes the Lebesgue measure on $\bY$ such that the numbers $\int \vy^{\bbeta} \; \od\lambda_\bY(\vy)$, $\bbeta \in \N^n$, are known. The measure $\hat{\mu} \in \cM(\bY)_+$ is the extension (by zero) of $\mu$ to $\bY$ via $\displaystyle\int_\bY h \; \od \hat{\mu} := \int_\bX h \; \od \mu$, i.e. the operator $\mu \mapsto \hat{\mu}$ is the adjoint of the restriction operator $\cC(\bY) \ni w\mapsto w\big|_\bX$.

The optimization problems~\eqref{eq:volp} and\eqref{eq:vold} are of the form~\eqref{eq:LPGeneral} where $\cY = \cP(\bY)$ is equipped with the uniform convergence topology, the sets $\bX_i$ for $i= 1,2$ are given by $\bX_1 := \bX$ and $\bX_2:= \bY$, the linear operator $\cA:\cY \rightarrow \cX$ is defined by $\cA \, w := (w\big|_\bX,w)$, the function $\vg$ given by $\vg (\vx):= (1,0)$ for all $\vx \in \bY$ and $T = \lambda\big|_{\bK}$. 

\begin{rem}\label{rem:ExtensionVol}
The ``optimal point'' $w^\star := \mathds{1}_\bX{:\bY \rightarrow \R}$, the indicator function on $\bX$, is not polynomial or even continuous whenever $\bX$ is not a connected component of $\bY$. Thus, for any feasible $w\in \cP(\bY)$, by continuity, we have
\begin{equation*}
    \displaystyle\int w \; \od\lambda_\bY = \displaystyle\int_{\bX} \underbrace{w(\vx)}_{\geq 1} \; \od \vx + \underbrace{\displaystyle\int_{\bY \setminus \bX} w(\vx) \; \od \vx}_{> 0} > \lambda (\bX).
\end{equation*}
As discussed in \textbf{Remark~\ref{rem:NoMinimizer}}, the LP~\eqref{eq:vold} can be relaxed to admit $w^\star$ as a minimizer. This can be achieved by enlarging $\cP(\bY)$ to {$\hat{\cY} := \mathrm{L}^1(\bY)\cap \mathrm{L}^\infty(\bY)$ the space of bounded integrable functions on $\bY$ equipped with the $\mathrm{L}^1(\bY)$ topology, $\cX$ to $\hat{\cX} := \mathrm{L}^1(\bX) \times \mathrm{L}^1(\bY)$}, and by extending $\cA$ and $T$ accordingly. {\textbf{Lemma \ref{lem:StandVolRelaxation}} assures that this relaxation does not impact the optimal value of the resulting LP (because the operator $\hat{A}(w):= (w\big|_\bX,w)$ is a positive operator and by \textbf{Theorem~\ref{thm:onesideApprox}} the set $\cY$ is one-sided dense in $\hat{\cY}$).}

\end{rem}

In this subsection, the convergence rate from \cite{korda2017convergence}, for the hierarchy of SoS programs for the volume problem, is improved with the help of {\bf Theorem \ref{thm:OldBaldi}}. We make the convention 
\begin{equation}\label{eq:bXHypercube}
\bY = \bK = [-1,1]^n = \bS(\vf) \quad \text{with} \quad \vf = (1 - x_1^2, \ldots, 1-x_m^2)
\end{equation}
(which is the best choice for computing the convergence rate; notice that $\bY$ can be chosen arbitrarily here without changing the optimal value $\lambda(\bX)$). Let us consider a hierarchy of problems from \cite{korda2017convergence}, which can be regarded as SoS strengthenings of Problem \eqref{eq:vold}:

\begin{equation}
    \begin{split}
        \od_\bX^\ell:=\text{\(\inf\limits_{w\in \mathbb{R}_\ell[\vy]}\)} \quad
        &\int w \; \od\lambda_\bY \\
        \text{s.t.} \quad
        &w-1\in\mathcal{Q}_{\ell}(\vh),\\
        &w\in\mathcal{Q}_{\ell}(\vf).
    \end{split}
    \label{eq: volsosKorda}
\end{equation}

To compute the rate of convergence of \eqref{eq: volsosKorda}, we need to estimate the dependence of {$\varepsilon \geq 0$} on the degree $\ell$ for which it holds $|\od_\bX^\ell-\lambda(\bX)|<\varepsilon$.

We shall use the standard condition from \cite{korda2017convergence}:

\begin{cond}[Finite one-sided Gibbs phenomenon; {\cite[Assumption 2]{korda2017convergence}}]
\label{cond:gibbs} \leavevmode

There exists a constant $c_G \geq 0$ depending only on $\bX$ and a sequence $(w_d)_{d\in \N} \subset \R[\vx]$ with
\begin{equation*}
    \begin{array}{cccc}
         w_d & \in & \underset{w \in \R_d[\vx]}{\argmin} & \displaystyle\int_{\bK} w(\vx) \; \od \vx \\
         & & \text{s.t.} & w \geq \ind_\bX
    \end{array}
\end{equation*}
satisfying $\max\{w_d(\vx) \; ; \, \vx \in \bK\} \leq c_G$. 
\end{cond}

\begin{rem}[On finite Gibbs phenomena] \leavevmode

    It is well known in Fourier analysis that the Gibbs phenomenon that occurs when approximating a discontinuous periodic function $\phi$ with trigonometric polynomials induces an overshoot of approximately 9\%, and thus the polynomial approximation is uniformly bounded by some constant $c_\phi$ that only depends on $\phi$. This is also the case for generic $L^1$ approximation of discontinuous functions with algebraic polynomials~\cite{davis2022gibbs}. However, to our best knowledge, these results have not been extended to \textit{one-sided} polynomial approximations, as is the case here. Following~\cite{korda2018convergence}, we conjecture (which is supported by the numerical experiments displayed in~\cite{lasserre2017stokes,tacchi2023stokes}) that {\bf Condition \ref{cond:gibbs}} also holds generically.
\end{rem}

\begin{thm}[Effective Putinar Positivstellensatz for volume computation]
\label{thm: volconvrate} \leavevmode

Define $\gamma(n,\vf,\vh) := \max\left(\gamma(n,\vf), \gamma(m,\vh)\right)$ and $\widehat{\text{\L{}}} := \max\{\text{\L{}},1\}$ where \L{} is the \L{}ojaciewicz exponent of $\vh$. Then, under {\bf Condition \ref{cond:gibbs}}, there exists $C > 0$ such that, for all $\varepsilon \in (0,1)$ it holds $\od_\bX^\ell-\lambda (\bX) < \varepsilon$ for any
\begin{equation}
 \ell \geq \gamma(n,\vf,\vh) \, \left(\nicefrac{C}{\epsilon}\right)^{3.5 n \widehat{\text{\L{}}}} \left(1 + 2^{n+1}\frac{c_G}{\epsilon}\right)^{2.5 n \widehat{\text{\L{}}}}  \in \cO\left(\frac{1}{\varepsilon^{6n\widehat{\text{\L{}}}}}\right)
\label{degreeest}
\end{equation}
\end{thm}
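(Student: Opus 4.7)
The plan follows the general methodology of Section~\ref{sec:method} applied to the volume LP~\eqref{eq:vold}, whose optimal $w^\star = \ind_\bX$ is not continuous, let alone polynomial. First, I would extract a minimizing sequence $\{w_d\}_{d\in\N}$ satisfying {\bf Condition \ref{cond:gibbs}}: $\deg(w_d) \leq d$, $w_d - 1 \geq 0$ on $\bX$, $w_d \geq 0$ on $\bK$, and $\|w_d\|_\infty^\bK \leq c_G$. The crucial quantitative ingredient is the rate
\[
\int w_d \; \od\lambda_\bY - \lambda(\bX) \leq \nicefrac{C_1}{d},
\]
which can be obtained by invoking {\bf Theorem \ref{thm:onesideApprox}} with $s=1$ and $f = \ind_\bX$: the modulus of continuity of the indicator satisfies $\omega^{L^1}_{\ind_\bX,0}(\lambda_\bY,\nicefrac{1}{d}) = \lambda_\bY(\{\vy \; ; \; \mathrm{dist}(\vy,\partial\bX) \leq \nicefrac{1}{d}\})$, and a standard tube estimate on the compact semi-algebraic boundary $\partial\bX$ bounds this quantity by $\cO(\nicefrac{1}{d})$.

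Second, I would enforce strict feasibility by setting $\hat w_d := w_d + \eta$ for an $\eta > 0$ to be determined. Then $\hat w_d - 1 \geq \eta$ on $\bX$ and $\hat w_d \geq \eta$ on $\bK$, while
\[
\int \hat w_d \; \od\lambda_\bY - \lambda(\bX) \leq \nicefrac{C_1}{d} + 2^m \eta.
\]
Choosing $\eta := \nicefrac{\varepsilon}{2^{m+1}}$ and $d := \lceil\nicefrac{2C_1}{\varepsilon}\rceil$ makes this cost gap at most $\varepsilon$. Third, I would apply the effective Putinar {\bf Theorem \ref{thm:OldBaldi}} independently to the two positivity constraints: to $p_1 := \hat w_d - 1 \in \cP(\bX)_\oplus$, for which $(p_1)^\star_\bX \geq \eta$, $\|p_1\| \leq c_G + 1 + \eta$ and $\deg(p_1) \leq d$; and to $p_2 := \hat w_d \in \cP(\bK)_\oplus$, for which $(p_2)^\star_\bK \geq \eta$, $\|p_2\| \leq c_G + \eta$ and $\deg(p_2) \leq d$. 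For $\bK = [-1,1]^m = \bS(\vf)$ the constraint qualification of {\bf Condition \ref{asm:finitecv}.1} holds everywhere (the gradients of $1-x_i^2$ active on any face of $\bK$ are pairwise orthogonal), hence the \L{}ojasiewicz exponent attached to $\vf$ equals $1$, which is precisely why only $\widehat{\text{\L{}}} := \max(\text{\L{}},1)$ shows up in the final bound. Substituting our choices of $d$, $\eta$, $\|p_i\|$ and $(p_i)^\star$ into~\eqref{eq:effputinarOld}, taking the larger of the two resulting degree bounds, and absorbing numerical constants into a single $C$, yields exactly~\eqref{degreeest}, whose total exponent $3.5m\widehat{\text{\L{}}} + 2.5m\widehat{\text{\L{}}} = 6m\widehat{\text{\L{}}}$ in $\nicefrac{1}{\varepsilon}$ gives the announced asymptotic order.

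The main obstacle is the quantitative control of the minimizing sequence: {\bf Condition \ref{cond:gibbs}} only asserts the existence of a uniformly bounded minimizing sequence, with no explicit rate, and therefore cannot be used as a black box. The cleanest resolution, sketched above, is to use {\bf Theorem \ref{thm:onesideApprox}} to \emph{manufacture} a concrete minimizing sequence simultaneously satisfying the uniform bound (forcing $c_G \leq 1 + \hat c$ via~\eqref{eq:onesideUnifApprox}) and the $L^1$ quantitative rate~\eqref{eq:onesideApprox}; the residual tube estimate $\lambda_\bY(\{\mathrm{dist}(\cdot,\partial\bX)\leq r\}) \in \cO(r)$ is standard for compact basic semi-algebraic sets, whose boundaries have finite $(m-1)$-dimensional Minkowski content. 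Beyond that, the remaining bookkeeping consists in coordinating the two separate applications of {\bf Theorem \ref{thm:OldBaldi}} (one on $\bX$ with exponent $\text{\L{}}$, one on $\bK$ with exponent $1$) into the single right-hand side of~\eqref{degreeest} via $\gamma(m,\vf,\vh) = \max(\gamma(m,\vf),\gamma(m,\vh))$ and $\widehat{\text{\L{}}}$.
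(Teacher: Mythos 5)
Your proof follows the same overall strategy as the paper's: start from the uniformly bounded minimizing sequence $\{w_d\}$ of {\bf Condition \ref{cond:gibbs}}, quantify its $L^1$ rate via one-sided approximation and a tube estimate on $\partial\bX$, lift to strict feasibility by adding a constant $\eta$, apply the effective Positivstellensatz {\bf Theorem \ref{thm:OldBaldi}} independently on $\bX$ (\L{}ojasiewicz exponent \L{}) and on $\bK = [-1,1]^m$ (\L{}ojasiewicz exponent $1$), then tune $d$ and $\eta$ in terms of $\epsilon$. Your choices $\eta = \nicefrac{\epsilon}{2^{m+1}}$ and $d = \lceil 2C_1/\epsilon\rceil$ coincide, up to relabeling of constants, with the paper's choice $w := w_d + \nicefrac{C}{2d\lambda(\bK)}$ and $d = \lceil C/\epsilon\rceil$. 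Your observation that constraint qualification on $\bK$ makes its \L{}ojasiewicz exponent $1$, and hence only $\widehat{\text{\L}} = \max(\text{\L},1)$ appears, is exactly the paper's reasoning.

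Your flag on the quantitative control of the minimizing sequence is a genuine and well-spotted subtlety: {\bf Condition \ref{cond:gibbs}} as stated asserts uniform boundedness of a minimizing sequence but does not assert the $\cO(\nicefrac{1}{d})$ cost rate, and the paper's proof applies {\bf Theorem \ref{thm:onesideApprox}} to the $w_d$ from {\bf Condition \ref{cond:gibbs}} as if they automatically satisfied the bound from that theorem, which is not literally what the condition grants. However, your proposed repair — invoking \eqref{eq:onesideApprox} and \eqref{eq:onesideUnifApprox} simultaneously to ``manufacture'' a sequence that is both uniformly bounded and has the right $L^1$ rate — is not fully justified by {\bf Theorem \ref{thm:onesideApprox}} as stated, since the two clauses each produce a polynomial $p_d$ but give no guarantee these are the same polynomial; moreover, for $f = \ind_\bX$ the quantity $\omega^{L^\infty}_{\ind_\bX,0}(\bY,\nicefrac{1}{d})$ saturates at $1$, so \eqref{eq:onesideUnifApprox} gives a uniform bound but no decaying rate. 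The intended reading, consistent with the paper's use of \cite[Lemma 1]{korda2018convergence}, is that {\bf Condition \ref{cond:gibbs}} is understood to select a minimizing sequence that simultaneously enjoys both the uniform bound $c_G$ and the $\cO(\nicefrac{1}{d})$ cost decay coming from the tube estimate; under that reading both your proof and the paper's go through, and your argument is otherwise a faithful reconstruction. One cosmetic inaccuracy: you bound $\|p_1\| \leq c_G + 1 + \eta$ whereas, with the paper's convention $\|p\| = \max_{[-1,1]^m} p$, the tighter bound $c_G + \eta - 1$ is available (as used in the paper); this is harmless for the asymptotic order.
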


\begin{proof}
\begin{subequations}
We first notice that for any $d,\ell \in \N$ and any $w \in \R_d[\vx]$ feasible for \eqref{eq: volsosKorda} at order $\ell$, it holds
    \begin{equation} \label{eq:vol_triangle}
        0 \leq \od_\bX^\ell - \lambda(\bX) = \od_\bX^\ell - \int \ind_\bX \; \od\lambda_\bK \leq \int (w - \ind_\bX) \; \od\lambda_\bK \leq \int (w_d - \ind_\bX) \; \od\lambda_\bK + \int |w-w_d| \; \od\lambda_\bK,
    \end{equation}
where $w_d$ comes from {\bf Condition \ref{cond:gibbs}}. Let $\epsilon > 0$. From {\bf Theorem \ref{thm:onesideApprox}}, we know that
\begin{equation} \label{eq:vol_osApprox}
    \int (w_d - \ind_\bX) \; \od\lambda_\bK \leq \overline{c} \, \omega_{\ind_\bX,0}^{L^1}(\lambda_\bK,\nicefrac{1}{d}),
\end{equation}
with
$$\omega_{\ind_\bX,0}^{L^1}(\lambda_\bK,\nicefrac{1}{d}) = \int \sup\left\{\vphantom{\sum} |\ind_\bX(\vx)-\ind_\bX(\vy)| \; ; \, \vx \in \bK, \ \|\vx-\vy\|\leq \nicefrac{1}{d}\right\} \; \od\lambda_\bK(\vy).$$
From \cite[{\bf Lemma 1} ]{korda2018convergence}, there exists a $c_4 \geq 0$ 
depending only on $\bX$ such that
$$ \omega_{\ind_\bX,0}^{L^1}(\lambda_\bK,\nicefrac{1}{d}) \leq \frac{c_4}{d}, $$
which we reinject into \eqref{eq:vol_osApprox} to get (introducing $C := 2\,\overline{c}\, c_4$)
\begin{equation}
    \int (w_d - \ind_\bX) \; \od\lambda_\bK \leq \frac{C}{2d}.
\end{equation}
It remains to specify $w$ so that we get a good bound on the second term in \eqref{eq:vol_triangle}. Defining $w := w_d + \nicefrac{C}{2 d\lambda(\bK)} = w_d + \nicefrac{c}{d2^{n+1}}$, we automatically get
$$ \int |w-w_d| \; \od\lambda_\bK = \frac{C}{2d}, $$
which we can reinject into \eqref{eq:vol_triangle} to get
$$ 0 \leq \od_\bX^\ell - \lambda(\bX) \leq \frac{C}{d}, $$
for any $\ell$ such that $w$ is feasible in \eqref{eq: volsosKorda} at order $\ell$. The last remaining piece is a value for such $\ell$, which we compute using again {\bf Theorem \ref{thm:OldBaldi}} on both $\bK$ and $\bX$. We first work on $\bX$: we want a lower bound on $\ell \in \N$ such that $p := w-1\in \cQ_\ell(\vh)$. Denoting \L{} the \L{}ojaciewicz exponent of $\vh$, we get from the effective Putinar Positivstellensatz that $p\in \cQ_\ell(\vh)$ for
$$ \ell \geq \gamma(n,\vh) \, \deg(p)^{3.5n\text{\L{}}}\left(\nicefrac
{\|p\|}{p^\star_\bX}\right)^{2.5 n \text{\L{}}},$$
{where we insert, recalling $w = w_d + \nicefrac{c}{d2^{n+1}}$},
\begin{itemize}
    \item $\deg(p) = \deg(w-1) = \deg(w_d + \nicefrac{C}{(2^{n+1}d)} - 1) = \deg (w_d) \leq d$
    \item $\|p\| = \max\{p(\vx) \; ; \, \vx \in [-1,1]^n\} \leq c_G + \nicefrac{C}{(2^{n+1}d)} - 1 $
    \item $p^\star_\bX = \min \{p(\vx) \; ; \, \vx \in \bX\} \geq \nicefrac{C}{(2^{n+1}d)}$.
\end{itemize}
We get 
\begin{equation} \label{eq:vol_lbound_Yd}
    \ell \geq \gamma(n,\vh) \, d^{3.5 n \text{\L{}}} \, \left(\dfrac{c_G + \nicefrac{C}{(2^{n+1}d)} - 1}{\nicefrac{C}{(2^{n+1}d)
    }}\right)^{2.5 n \text{\L{}}} = \gamma(n,\vh) \, d^{3.5 n \text{\L{}}} \, \left(1 + 2^{n+1}\frac{c_G - 1}{C}d\right)^{2.5 n \text{\L{}}}.
\end{equation}
Next, we work on $\bK = [-1,1]^n = \bS(\vf)$, for which the \L{}ojaciewicz exponent is $1$, and we want a lower bound on $\ell$ such that $w \in \cQ_\ell(\vf)$, which is again given by the effective Putinar Positivstellensatz as
$$ \ell \geq \gamma(n,\vf) \, \deg(w)^{3.5 n} \left(\nicefrac{\|w\|}{w_\bK^\star}\right)^{2.5 n} \qquad \text{with:} $$
\begin{itemize}
    \item $\deg(w) = \deg(w_d + \nicefrac{C}{(2^{n+1}d)}) \leq d$
    \item $\|w\| = \max\{w(\vx) \; ; \vx \in \bK\} \leq c_G + \nicefrac{C}{(2^{n+1}d)}$
    \item $w^\star_\bK = \min\{w(\vx) \; ; \, \vx \in \bK\} \geq \nicefrac{C}{(2^{n+1}d)}$
\end{itemize}
so that we get
\begin{equation} \label{eq:vol_bound_Xd}
    \ell \geq \gamma(n,\vf) \, d^{3.5 n} \left(1 + 2^{n+1}\frac{c_G}{C}d\right)^{2.5 n}
\end{equation}
Eventually, taking $\ell$ larger than the maximum between the right hand sides of \eqref{eq:vol_lbound_Yd} and \eqref{eq:vol_bound_Xd} with $d := \lceil\nicefrac{C}{\epsilon}\rceil$ yields the announced bound.
\end{subequations}

\end{proof}

\begin{cor}
Using the notations in {\bf Proposition \ref{thm: volconvrate}} and under {\bf Condition \ref{cond:gibbs}}, it holds 
$$\od_\bX^\ell - \lambda(\bX) \in \cO\left(\ell^{-\frac{1}{6 n \widehat{\text{\L{}}}}}\right) \quad \text{as} \quad \ell\to\infty.$$
\end{cor}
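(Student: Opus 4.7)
The plan is to simply invert the bound given in \textbf{Theorem \ref{thm: volconvrate}}. Recall that the theorem shows that $\od_\bX^\ell - \lambda(\bX) < \varepsilon$ as soon as
\begin{equation*}
\ell \geq \ell_b(\varepsilon) := \gamma(m,\vf,\vh) \left(\nicefrac{C}{\varepsilon}\right)^{3.5 m \widehat{\text{\L{}}}} \left(1 + 2^{m+1}\frac{c_G}{\varepsilon}\right)^{2.5 m \widehat{\text{\L{}}}}.
\end{equation*}
Reading the dependence of $\ell_b$ on $\varepsilon$, the two exponents $3.5\,m\widehat{\text{\L{}}}$ and $2.5\,m\widehat{\text{\L{}}}$ add up, giving the asymptotic
\begin{equation*}
\ell_b(\varepsilon) \underset{\varepsilon \to 0}{\sim} \gamma(m,\vf,\vh) \, C^{3.5 m\widehat{\text{\L{}}}} (2^{m+1}c_G)^{2.5m\widehat{\text{\L{}}}} \, \varepsilon^{-6m\widehat{\text{\L{}}}} \in \cO\left(\varepsilon^{-6 m \widehat{\text{\L{}}}}\right).
\end{equation*}

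Next, given $\ell$ sufficiently large, I would pick $\varepsilon = \varepsilon_\ell$ as the smallest positive real such that $\ell_b(\varepsilon) \leq \ell$; by monotonicity, inverting the above asymptotic expression yields $\varepsilon_\ell \in \cO\left(\ell^{-1/(6m\widehat{\text{\L{}}})}\right)$ as $\ell \to \infty$. By construction, the hypotheses of \textbf{Theorem \ref{thm: volconvrate}} are satisfied with this choice, so that $0 \leq \od_\bX^\ell - \lambda(\bX) \leq \varepsilon_\ell$, which is the announced bound.

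The argument is purely a matter of inverting an explicit power law; there is no real obstacle, as all the analytical work (one-sided polynomial approximation of $\ind_{\bX}$, perturbation by $\nicefrac{C}{(2^{m+1}d\lambda(\bK))}$, and two applications of the effective Putinar Positivstellensatz on $\bX$ and $\bK$) has already been done in the proof of \textbf{Theorem \ref{thm: volconvrate}}. The only subtlety worth double-checking is that the prefactor $\gamma(m,\vf,\vh)$ and the constant $C$ are indeed independent of $\varepsilon$ (which is clear from their definitions), so they can be absorbed in the $\cO(\cdot)$ constant.
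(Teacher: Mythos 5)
Your proof is correct and follows exactly the same route as the paper, which simply states "Simply inverting the expression in \eqref{degreeest}." You just spell out the inversion in more detail, correctly tracking how the two exponents $3.5m\widehat{\text{\L{}}}$ and $2.5m\widehat{\text{\L{}}}$ combine to $6m\widehat{\text{\L{}}}$ and noting that $\gamma(m,\vf,\vh)$ and $C$ are $\varepsilon$-independent.
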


\begin{proof}
Simply inverting the expression in \eqref{degreeest}.
\end{proof}

\subsection{Stokes constraints}\label{sec:Stokes}

In this subsection, we investigate the effect of smoothness of optimal solutions to the infinite-dimensional LP. We consider the case of only one defining polynomial inequality, i.e. $r=1$. This means we compute the volume of the \textit{open set}

$$ \bX := \{\vx \in \R^n \; ; h(\vx) > 0 \}$$
and, as in \eqref{eq:bXHypercube}, let $\bK$ be given by $\bK = [-1,1]^n = \bS(\vf)$. Moreover, we add the following condition
\begin{cond} \label{cond:smooth}
It holds $\grad \, h(\vx) \neq 0 \text{ for all } \vx \in \partial \bX$, in particular the boundary $\partial \bX$ is smooth.
\end{cond}

\begin{rem}[No more Gibbs phenomenon] \leavevmode

    Note that now, we do not assume the finite Gibbs phenomenon from {\bf Condition \ref{cond:gibbs}}. As we will show, this is because in the following formulations, optimal solutions cease to be discontinuous and thus the Gibbs phenomenon does not occur anymore, see~\cite{tacchi2023stokes} for a more in-depth discussion on that topic.
\end{rem}

In~\cite{tacchi2023stokes}, a new formulation is designed to cope with the slow convergence of the moment-SoS hierarchy corresponding to \eqref{eq:volp} and \eqref{eq:vold} using the divergence theorem:
\begin{subequations} \label{eq:stokes}
\begin{equation}\label{eq:stokesp}
\begin{tabular}{ccll}
    $\lambda(\bX) =$ & $\sup\limits_{\substack{\mu\in\cM(\overline{\bX}) \\ \nu \in \cM(\partial\bX)}}$ & $\displaystyle\int 1 \; \od\mu$ &\\
    & \st \; & $\mu \in \cM(\overline{\bX})_+$ & \\
    & & $\nu \in \cM(\partial\bX)_+$ & \\
& & $\lambda_\bK - \mu \in \cM(\bK)_+$ &  \\
& & $\displaystyle\int_\bX \Delta u \; \od \mu = - \displaystyle\int_{\partial \bX}(\grad \ h) \cdot \grad \ u \ \od \nu$ & $\forall u \in C^2(\R^n)$
\end{tabular}
\end{equation}
and its dual problem
\begin{equation}\label{eq:stokesd}
\begin{tabular}{ccl}
    $\lambda(\bX) =$ & $\inf\limits_{\substack{w \in \cP(\bK) \\ u \in \cP(\overline{\bX})}}$ & $\displaystyle\int w \; \od\lambda_\bK$ \\
& \st & $w \in \cC(\bK)_+$ \\
& & $w|_{\overline{\bX}} - \Delta u - 1 \in \cC(\overline{\bX})_+$\\
& & $-(\grad \ u \cdot \grad \ h)|_{\partial\bX} \in \cC(\partial\bX)_+$
\end{tabular}
\end{equation}
\end{subequations}

The optimization problems~\eqref{eq:stokesd} and~\eqref{eq:stokesp} are of the form LP~\eqref{eq:LPGeneral} where
\begin{equation*}
    \cY := \cP(\bK) \times \cP(\overline{\bX})
\end{equation*}
is equipped with the induced topology from $\cC(\bK) \times C^2(\overline{\bX})$, the sets $\bX_i$ for $i = 1,2,3$ are given by
\begin{equation*}
    \bX_1 = \bK, \quad \bX_2 := \overline{\bX} \quad \text{and} \quad  \bX_3 := \partial \bX,
\end{equation*}
the linear form $\bT :\cY \rightarrow \R$ is given by
\begin{equation*}
    \langle \bT, \vw \rangle := \displaystyle\int w \; \od\lambda_\bK \quad \text{for all } \vw := (w,u) \in \cY,
\end{equation*}
the operator $\cA:\cY\rightarrow \cX$ is defined as
\begin{equation*}
    \cA \ (w,u) := (w|_{\overline{\bX}} - \Delta u, w,-(\grad \ u \cdot \grad \ h)|_{\partial\bX}),
\end{equation*}
and $\vg$ is the constant function taking the value $(1,0,0)$.

\begin{rem}[Interpretation of the additional constraints in~\eqref{eq:stokesp} and~\eqref{eq:stokesd}]
    Compared to the standard approach LPs~\eqref{eq:volp} and~\eqref{eq:vold} the LPs~\eqref{eq:stokesp} and~\eqref{eq:stokesd} contain additional feasibility constraints. They enforce additional structure of the optimal solution. To be more precise: For the GMP~\eqref{eq:volp}, an optimal solution is $\mu = \lambda\big|_\bX$, i.e. $\mu$ is the Lebesgue measure restricted to $\bX$. Applying divergence theorem we have for all $u \in C^2(\R^n)$
    \begin{equation*}
        \int_\bX \Delta u \; \od\mu = \int_\bX \div \ \grad \ u \; \od\mu = \int_\bX \div \ \grad \ u(\vx) \; \od(\vx) = \int_{\partial \bX} \grad \ u \cdot \vn \; \od \mathcal{H}^{n-1}
    \end{equation*}
    where $\div$ denotes the divergence, $\mathcal{H}^{n-1}$ the $n-1$ dimensional Hausdorff measure and $\vn(\vx) = {-}\frac{\grad \ h(\vx)}{\|\grad \ h(\vx)\|}$ the outer normal for $\bX$ at $\vx \in \partial \bX$. This shows that $(\mu, {\frac{1}{\|\grad \ h\|}}\mathcal{H}^{n-1}\big|_{\partial \bX})$ is feasible (and optimal) for the GMP~\eqref{eq:stokesp}. Similarly for~\eqref{eq:stokesd}, for feasible $(w,u)$ the feasibility constraints together with the divergence theorem give
    \begin{eqnarray*}
        \displaystyle\int w \; \od\lambda_\bK \geq \displaystyle\int w \; \od\lambda_\bX \geq \displaystyle\int \Delta \ u + 1 \; \od\lambda_\bX = -{\frac{1}{\|\grad \ h\|}}\displaystyle\int_{\partial \bX} \grad \ u \cdot \grad \ h \; \od \mathcal{H}^{n-1}  + \lambda (\bX) \geq \lambda (\bX).
    \end{eqnarray*}
    For more insights into the LPs~\eqref{eq:stokesp} and~\eqref{eq:stokesd} we refer to~\cite{tacchi2023stokes}.
\end{rem}

\begin{rem}
    The formulations~\eqref{eq:stokesp} and~\eqref{eq:stokesd} are slight refinements of the formulation in~\cite{tacchi2023stokes}. The functional LP formulation in~\cite{tacchi2023stokes} treats decisions variables $(w,\vu)\in \cC(\bK) \times C^1(\overline{\bX})^n$ while in~\eqref{eq:stokesd} we treat $(w,u)\in \cC(\bK) \times C^2(\overline{\bX})$. The relation between both formulations is that we can take $\vu = \grad \ u$ for $(w,u)$ feasible for~\eqref{eq:stokesd}, see~\cite{tacchi2023stokes} and {\bf Theorem~\ref{thm:smooth}}. We thank an anonymous reviewer for suggesting this refined formulation.
\end{rem}

It has been proved in~\cite{tacchi2023stokes} that the existence of an optimal solution to \eqref{eq:stokesd} can be deduced from the existence of a solution to a Poisson PDE with Neumann boundary condition:

\begin{equation} \label{eq:poisson}
\begin{array}{rrrlr}
- \Delta u & = & \phi & \quad \text{ in } & \bX \\
\partial_\vn u & = & 0 & \quad \text{ on } & \partial \bX \\
\phi & \leq & 1 & \quad \text{ in } & \bX \\
\phi & = & 1 & \quad \text{ on } & \partial \bX
\end{array}
\end{equation}
Namely, for a pair $(u,\phi)$ satisfying \eqref{eq:poisson}, set
\begin{equation}\label{eq:vu,wfromPDE}
    w(\vx) := \begin{cases}
        1-\phi(\vx), & \vx \in \bX\\
        0, & \text{ else,}
    \end{cases}
\end{equation}
then $(w,u)$ is optimal for (\ref{eq:stokesd}).

In~\cite{tacchi2023stokes}, $\phi$ is proposed under the form 
$$ \phi(\vx) = 1 - h(\vx) \sum\limits_{i=1}^N \frac{\lambda(\bX_i)}{\int {h} \; d\lambda_{\bX_i}} \ind_{\bX_i}(\vx) $$
where the $\bX_i$ are the connected components of $\bX$. As a result, $\phi$ was proved to be only Lipschitz continuous, so that the optimal function $w = 1 - \phi$ was also only Lipschitz continuous. However, another, smooth optimal function can be designed. 

\begin{thm}[Existence of smooth solutions] \label{thm:smooth} \leavevmode

There exist smooth functions $u,\phi \in C^\infty(\overline{\bX})$ solutions to \eqref{eq:poisson}. Further, $u,\phi \in C^\infty(\overline{\bX})$ can be chosen such that $w$ given by \eqref{eq:vu,wfromPDE} is smooth and that $(w,u)$ is optimal for \eqref{eq:stokesd}, i.e. it holds $\displaystyle\int w\; \od \lambda_\bK = \lambda(\bX).$
\end{thm}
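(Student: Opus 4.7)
The plan is to reduce the existence claim to two familiar ingredients: a bump-function construction that designs $\phi$ to be identically equal to $1$ on a whole neighborhood of $\partial\bX$ inside $\overline{\bX}$ (rather than merely on the boundary), and classical Neumann elliptic regularity on a smooth domain. The extra flatness of $\phi$ near $\partial\bX$ is the key point: it is precisely what lets the function $w=(1-\phi)\,\ind_\bX$ extend smoothly by zero across $\partial\bX$ to the whole cube $\bK$, upgrading the merely Lipschitz solution of \cite{tacchi2023stokes} to a $C^\infty$ one.

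First, I would construct $\phi$ by hand. Pick any nonnegative bump $\psi\in C^\infty_c(\bX)$ with $\int_\bX \psi\,\od\lambda>0$ (which exists since $\bX$ is open and nonempty) and set
\[
\phi := 1 - c\,\psi,\qquad c := \frac{\lambda(\bX)}{\int_\bX \psi\,\od\lambda}.
\]
By construction, $\phi\in C^\infty(\overline{\bX})$, $\phi\leq 1$ on $\bX$, $\phi\equiv 1$ on $\overline{\bX}\setminus\mathrm{supp}\,\psi$, and, crucially, the compatibility condition for the forthcoming Neumann problem, $\int_\bX \phi\,\od\lambda = \lambda(\bX)-c\int_\bX \psi\,\od\lambda = 0$, is satisfied by design.

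Next I would solve the resulting Neumann problem $-\Delta u = \phi$ in $\bX$, $\partial_\vn u = 0$ on $\partial\bX$. Under \textbf{Condition \ref{cond:smooth}}, the set $\partial\bX=\{h=0\}$ is a $C^\infty$ hypersurface by the implicit function theorem (since $h$ is polynomial with nonvanishing gradient there), so standard elliptic regularity on smooth bounded domains (e.g.\ \cite{gilbarg1977elliptic}) provides a solution $u\in C^\infty(\overline{\bX})$, unique up to an additive constant. Setting $\vu := \grad\,u \in C^\infty(\overline{\bX})^m$ and $w := (1-\phi)\,\ind_\bX$, the flatness of $\phi$ near $\partial\bX$ yields $w\equiv 0$ in a neighborhood of $\partial\bX$ in $\overline{\bX}$; together with $w\equiv 0$ on $\bK\setminus\bX$ this gives $w\in C^\infty(\bK)$, which answers the smoothness part of the statement.

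It then remains to verify feasibility and optimality of $(w,\vu)$ for \eqref{eq:stokesd}. This is a routine check: $w\geq 0$ since $\phi\leq 1$; on $\overline{\bX}$, $w-\div\,\vu - 1 = (1-\phi) - \Delta u - 1 = -\phi + \phi = 0\geq 0$; on $\partial\bX$, the outward normal $\vn$ is colinear to $-\grad\,h/|\grad\,h|$ (since $h>0$ inside $\bX$), so the Neumann condition $\partial_\vn u=0$ gives $\vu\cdot\grad\,h=0$ and hence $-\vu\cdot\grad\,h=0\geq 0$; finally $\int_\bK w\,\od\lambda_\bK = \int_\bX(1-\phi)\,\od\lambda = \lambda(\bX)$, which by weak duality against \eqref{eq:stokesp} forces optimality. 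The only step drawing on anything nontrivial is the Neumann elliptic regularity with $C^\infty$ boundary, which is classical; the conceptual input really lies in choosing $\phi$ flat at $\partial\bX$, bypassing the loss of regularity intrinsic to the piecewise definition \eqref{eq:vu,wfromPDE}.
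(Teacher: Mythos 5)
Your construction is essentially the same in spirit as the paper's (a compactly supported bump function to make $\phi$ flat near $\partial\bX$, Neumann elliptic regularity on a $C^\infty$ domain, and a direct feasibility/optimality check), and your explicit observation that $\phi\equiv 1$ near $\partial\bX$ is exactly what makes $w=(1-\phi)\ind_\bX$ smooth across the boundary is well taken. However, there is a genuine gap when $\bX$ is disconnected, a case the paper explicitly allows (nothing in \textbf{Condition \ref{cond:smooth}} forces $\bX=\{h>0\}$ to be connected, and the paper's proof deliberately works component by component).

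The Neumann compatibility condition for $-\Delta u=\phi$ with $\partial_\vn u=0$ must hold \emph{on each connected component} $\bX_i$ of $\bX$ separately: one needs $\int_{\bX_i}\phi\,\od\lambda=0$ for every $i$, not merely $\int_{\bX}\phi\,\od\lambda=0$. Your single global normalisation $\phi=1-c\,\psi$ with $c=\lambda(\bX)/\int_{\bX}\psi\,\od\lambda$ only guarantees the global identity. If, say, your bump $\psi$ is supported in $\bX_1$ while $\bX_2$ is another component of positive measure, then $\int_{\bX_2}\phi\,\od\lambda=\lambda(\bX_2)\neq 0$ and the Neumann problem on $\bX_2$ has \emph{no} solution, so the ``glue the pieces'' step fails. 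The fix is exactly what the paper does: pick a bump $\varphi_i$ in each $\bX_i$, normalise each one by $\lambda(\bX_i)/\int_{\bX_i}\varphi_i\,\od\lambda$, set $\phi := 1-\sum_i\psi_i$, and solve the Poisson--Neumann problem on each $\bX_i$ separately before gluing. With that repair your argument is correct and coincides with the paper's.
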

\begin{proof} See Appendix \ref{appendix:Stokes}.
\end{proof}

\begin{rem}\label{rem:ExtensionVolStokes}
    {As discussed in \textbf{Remark~\ref{rem:NoMinimizer}} it can be beneficial to enlarge the decision space for the LP at hand so that a minimizer exists. We can do so for the LP~\eqref{eq:stokesd}: By extending $\cY = \cP(\bK) \times \cP(\overline{\bX})$ to $\hat{\cY} := \cC(\bK) \times C^2(\overline{\bX})$ in~\eqref{eq:stokesd}, we infer from \textbf{Theorem~\ref{thm:smooth}} (and \textbf{Lemma \ref{lem:d*=bard*}}) that the pair $(w,u)\in \hat{\cY}$ is a minimizer of the relaxed LP with cost $\lambda(\bX)$.}
\end{rem}

The regularity result in {\bf Theorem \ref{thm:smooth}} allows us to incorporate higher order approximation rates via the Jackson-inequality {\bf Theorem \ref{thm:CkApprox}}. Its effect on the convergence rate of the moment-SoS hierarchy for the GMP \eqref{eq:stokesd} is stated in the following theorem. For $\ell \in \N$, let us denote by $\mathrm{Vol}_\ell$ the optimal value in the $\ell$-th level of the moment-SoS hierarchy for \eqref{eq:stokesd}.

\begin{thm}[Rate for Stokes-augmented volume computation]\label{thm:VolSmoothConRate} \leavevmode

    Under {\bf Condition \ref{cond:smooth}} it holds, for $\ell \in \N$ large enough and with $\widehat{\text{\L{}}} := \max \{1,\text{\L}\}$, that
    \begin{equation}\label{eq:VolStokesConvRate}
        0 \leq \mathrm{Vol}_\ell - \lambda(\bX) \in \cO \left( \ell^{-\frac{1}{(2.5 + s)n \widehat{\text{\L{}}}}}\right) \; \; \; \text{ as } \ell \to\infty \; \; \text{ for any } s > 0.
    \end{equation}    
\end{thm}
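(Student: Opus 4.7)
The plan is to follow the general procedure of Section \ref{sec:method}, mirroring the proof of \textbf{Theorem \ref{thm:SDEExitLocationConvRate}}. Fix $s > 0$ and set $k := \lceil 3.5/s \rceil$. By \textbf{Theorem \ref{thm:smooth}} pick a smooth minimizer $(w^\star, \vu^\star)$ of \eqref{eq:stokesd} with $\vu^\star = \grad u^\star$, $u^\star \in C^\infty(\overline{\bX})$, and $w^\star \in C^\infty(\bK)$ satisfying $w^\star \geq 0$ on $\bK$, $w^\star - \div \vu^\star - 1 = 0$ on $\bX$, $\vu^\star \cdot \grad h = 0$ on $\partial\bX$ (the latter because $\grad h \parallel \vn$ and $\partial_\vn u^\star = 0$ there), and the key identity $\int_\bK w^\star \, \od\lambda = \lambda(\bX)$. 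Use \textbf{Lemma \ref{thm:HölderExtension}} to extend $u^\star$ to $U \in C^{k+2}(\bK)$, and \textbf{Theorem \ref{thm:CkApprox}} to obtain polynomials $p_d, q_d \in \R_d[\vx]$ with $\|w^\star - p_d\|_{C^0(\bK)} = \cO(d^{-(k+2)})$ and $\|U - q_d\|_{C^2(\bK)} = \cO(d^{-k})$.

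Next I would design an inward-pointing perturbation addressing the three constraints of \eqref{eq:stokesd} simultaneously. Fix $C > 0$ depending only on $\|\Delta h\|_\infty^\bK$ so that $1 + \Delta h / C \geq \tfrac{1}{2}$ on $\bK$, and introduce perturbation scales $\theta_d, \eta_d > 0$ of order $d^{-k}$. Set
$$w_d := p_d + \theta_d\,(1 + \Delta h/C) + \eta_d, \qquad \vu_d := \grad q_d - (\theta_d/C)\,\grad h.$$
A straightforward computation then gives: on $\bX$, $w_d - \div \vu_d - 1 \geq \theta_d/2 + \eta_d - \cO(d^{-k})$; on $\partial\bX$, $-\vu_d \cdot \grad h \geq (\theta_d/C)\,c_h^2 - \cO(d^{-k})$ with $c_h := \min_{\partial\bX}|\grad h| > 0$ by \textbf{Condition \ref{cond:smooth}}; and on $\bK$, $w_d \geq \theta_d/2 + \eta_d - \cO(d^{-(k+2)})$ using $w^\star \geq 0$ together with the uniform approximation bound. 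Tuning the constants hidden in $\theta_d, \eta_d$ then forces all three slacks to be positive and of order $d^{-k}$.

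Then I would invoke \textbf{Theorem \ref{thm:OldBaldi}} on each of the three constraint polynomials, with the quadratic modules associated to $\bX = \bS(\vh)$, $\partial\bX$ (adjoining $\pm h$ to encode the equality $h = 0$) and $\bK = \bS(\vf)$, respectively. Since the \L{}ojasiewicz exponent of the description $\vf$ of the hypercube equals $1$, the dominant contribution is $\widehat{\text{\L{}}} = \max\{1, \text{\L{}}\}$. The polynomials involved have $\deg = \cO(d)$, are uniformly bounded on $\bK$ independently of $d$ (thanks to the $C^{k+2}(\bK)$ extensions), and have slack $\gtrsim d^{-k}$, so $\|p\|/p^\star = \cO(d^k)$; \textbf{Theorem \ref{thm:OldBaldi}} then ensures feasibility of $(w_d, \vu_d)$ at SoS level $\ell = \cO(d^{(3.5 + 2.5k)m\widehat{\text{\L{}}}})$. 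The cost satisfies
$$\int w_d\,\od\lambda_\bK \leq \int w^\star\,\od\lambda_\bK + \cO(d^{-(k+2)}) + (\theta_d + \eta_d)\,\lambda(\bK) = \lambda(\bX) + \cO(d^{-k}),$$
where the identity $\int w^\star\,\od\lambda_\bK = \lambda(\bX)$ is crucial. Inverting the Putinar bound gives $\mathrm{Vol}_\ell - \lambda(\bX) \in \cO(d^{-k}) = \cO(\ell^{-k/((3.5 + 2.5k)m\widehat{\text{\L{}}})})$, and the elementary inequality $k/(3.5 + 2.5k) \geq 1/(s + 2.5)$ (valid since $k \geq 3.5/s$) yields \eqref{eq:VolStokesConvRate}.

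The main obstacle is Step 1, namely the availability of a $C^\infty(\bK)$-smooth optimizer $w^\star$ with the correct cost. Unlike the exit-location argument, whose cost is a single point evaluation inside $\bX$ and therefore insensitive to how one extends the optimum to $\bK$, here the cost is the full $L^1(\bK)$-integral of $w$: any naive smooth extension of $w^\star = 1 - \phi^\star$ from $\overline{\bX}$ across $\partial\bX$ would introduce a nonvanishing overshoot and destroy the rate. Securing a minimizer that is genuinely smooth on all of $\bK$ requires choosing $(u^\star, \phi^\star)$ with $\phi^\star \equiv 1$ in a one-sided neighborhood of $\partial\bX$, so that $w^\star$ extends by $0$ in a $C^\infty$ fashion and the cost identity is preserved; this is the content of \textbf{Theorem \ref{thm:smooth}} (proved in Appendix \ref{appendix:Stokes}). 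Once that regularity is granted, the remainder is a quantitative three-constraint analogue of the exit-location proof.
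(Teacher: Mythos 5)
Your proposal is correct and follows essentially the same route as the paper: take the globally smooth optimizer from Theorem \ref{thm:smooth} (noting $\phi^\star \equiv 1$ near $\partial\bX$ so $w^\star$ extends smoothly by zero), extend $u^\star$ via Lemma \ref{thm:HölderExtension}, approximate by Jackson's theorem, perturb inward using the direction $-\grad h$ (so that the boundary slack is controlled by $a_2 = \min_{\partial\bX}|\grad h|^2$), apply the effective Putinar bound on each of the three constraints, and invert the resulting $\ell = \cO(d^{(3.5+2.5k)m\widehat{\text{\L{}}}})$ relation. The only quibbles are cosmetic: your divergence computation gives $\theta_d(1 + 2\Delta h/C)$ rather than $\theta_d(1 + \Delta h/C)$, so $C$ must be chosen with $1 + 2\Delta h/C \geq \tfrac12$ on $\bK$; and the $\theta_d(1 + \Delta h/C)$ shift also contributes $\theta_d C^{-1}\int \Delta h\,\od\lambda_\bK$ to the cost, which is still $\cO(d^{-k})$ and does not change the final rate.
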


\begin{proof} {
\renewcommand\theequation{\roman{equation}}
\setcounter{equation}{0}

    Recall that we assume $\bK = [-1,1]^n$. By {\bf Theorem \ref{thm:smooth}}, let $u,\phi$ be smooth solutions of (\ref{eq:poisson}) such that $w = (1-\phi)\big|_\bX$ from (\ref{eq:vu,wfromPDE}) is smooth and optimal for (\ref{eq:stokesd}). Let $k\in \N$ and $\Bar{w}$ and $\Bar{u}$ be $C^{k+1}$ respectively $C^{k+3}$ extensions of $w$ respectively $u$ from {\bf Theorem \ref{thm:HölderExtension}}, i.e. $\Bar{w} \in C^{k+1}(\bK),\Bar{u} \in C^{k+3}(\bK)$ with
    \begin{equation*}\label{eq:Ckboundw,u}
        \|\Bar{w}\|_{C^{k+1}(\bK)} \leq c \|w\|_{C^{k+1}(\bX)}, \quad \|\Bar{u}\|_{C^{k+3}(\bK)} \leq c \|u\|_{C^{k+3}(\bX)}
    \end{equation*}
    for some constant $c = c(k,\bX)$. We denote by $W,U \in \R$ the following constants
    \begin{equation}\label{eq:VolConstantswu}
        W := \|\Bar{w}\|_\infty^\bK \leq c \|w\|_{C^{k+1}(\bX)}, \quad U:= \|\Bar{u}\|_{C^2(\bK)} \leq c \|u\|_{C^{k+3}(\bK)}.
    \end{equation}
    In the rest of the proof, we will also use the following constants:
    \begin{equation}\label{eq:VolConstants}
        a_1:= \|\Delta h\|_\infty^{\overline{\bX}}, \quad , a_2 := \inf\limits_{\vx \in \partial \bX}\|\grad \ h(\vx)\|^2 , \quad a_3 := \|h\|_{C^2(\overline{\bK})}
    \end{equation}
    Note that $a_2 > 0$ by {\bf Condition \ref{cond:smooth}}. We define an inward-pointing direction, namely, for $\theta >0$ it holds
    \begin{equation}\label{eq:defwtut}
        (w_\theta,u_\theta):= (w+2a_1 \theta,u - \theta \ h) \text{ is strictly feasible.}
    \end{equation}
    To verify this, note first that by feasibility of $w$ it holds $w\geq 0$ on $\bK$ and thus
    \begin{equation}\label{eq:wthetalower}
        0< 2a_1 \theta \leq w + 2a_1 \theta = w_\theta \leq W + 2a_1 \theta \quad \text{on }\bK.
    \end{equation}
    In particular, this shows feasibility for the first constraint in \eqref{eq:stokesd}. For the last constraint in \eqref{eq:stokesd} let $\vx \in \partial \bX$; we have
    \begin{eqnarray}\label{eq:VolCon2BoundLow}
        -(\grad \ u_\theta \cdot \grad \ h)(\vx)  = -\underbrace{(\grad \ u \cdot \grad \ h)(\vx)}_{= 0, \text{ by~\eqref{eq:poisson}}} + \theta \|\grad \ h(\vx)\|^2 = \theta \|\grad \ h(\vx)\|^2 = \theta a_2 > 0
    \end{eqnarray}
    with the constant $a_2$ from \eqref{eq:VolConstants}; i.e. feasibility for the last constraint in \eqref{eq:stokesd}. We further have, for $\vx \in \bK$
    \begin{equation}\label{eq:VolCon2BoundUp}
        |(\grad \ u_\theta \cdot \grad \ h)(\vx)| = \left|\left((\grad \ u - \theta \ \grad \ h\right) \cdot \grad \ h)(\vx)\right| \leq (U + \theta a_3) \cdot a_3.
    \end{equation}
    Now, let us verify strict feasibility for the remaining -- the second -- constraint in \eqref{eq:stokesd}. On $\bX$ we have
    \begin{eqnarray}\label{eq:bound1ConVolLow}
        w_\theta- \Delta u_\theta - 1 & = & w - \Delta u - 1 + 2\theta a_1 + \theta \Delta h = 2\theta a_1 + \theta \Delta h \geq 2\theta a_1 - \theta a_1 = \theta a_1 > 0
    \end{eqnarray}
    with $a_1$ from \eqref{eq:VolConstants}.
    Further, on $\bK$ we have
    \begin{equation}\label{eq:bound1ConVolUp}
        |w_\theta- \Delta u_\theta - 1| = | w + 2a_1 \theta - \Delta \ (u-\theta h) - 1| \leq W + 2\theta a_1 + U + \theta a_3 + 1 
    \end{equation}
    for the constants $W,U$ from \eqref{eq:VolConstantswu} and $a_1,a_3$ from \eqref{eq:VolConstants}.
    The cost for $(w_\theta,u_\theta)$ is simply
    \begin{equation}\label{eq:VolCostwtheta}
        \displaystyle\int w_\theta\; \od \lambda_\bK = \displaystyle\int w\; \od \lambda_\bK + 2a_1 \theta \lambda(\bK) = \lambda(\bX) + 2^{m+1}a_1 \theta.
    \end{equation}
    In the next step, we approximate the pair $(w_\theta,u_\theta)$ from~\eqref{eq:defwtut} by feasible polynomials. Because the pair $(w_\theta,u_\theta)$ is obtained by a polynomial perturbation of $(w,u)$ it is sufficient to approximate $w$ and $u$ by polynomials. Furthermore, doing so allows us to keep explicit control on the effect of $\theta$. We use smoothness of $w, \vu$ and {\bf Theorem \ref{thm:CkApprox}}. Let $c = c_k \in \R$ be the constant from {\bf Theorem \ref{thm:CkApprox}}. That is, there exist polynomials $p_d, q_d \in \R_d[\vx]$ with
    \begin{eqnarray}\label{eq:wvuapprox}
        \|w-p_d\|_{\infty}^\bK \;,\; \|u-q_d\|_{C^2(\bK)} \leq \frac{c_k}{d^k}.
    \end{eqnarray}
    For $\theta > 0$ and $d\geq \deg (h)$ we define
    \begin{equation*}
        p_{d,\theta} := p_d + 2\theta a_1 \in \R_d[\vx], \quad q_{d,\theta} := q_d - \theta \ h \in \R_d[\vx].
    \end{equation*}
    We have $\|w_{\theta}-p_{d,\theta}\|_{\infty}^\bK = \|w-p_d\|_{\infty}^\bK$ and $\|u_\theta -q_{d,\theta}\|_{C^2(\bK)} = \|u-q_d\|_{C^2(\bK)}$, thus, by \eqref{eq:wvuapprox}, we get
    \begin{equation}\label{eq:wthetaapprox}
        \|w_{\theta}-p_{d,\theta}\|_{\infty}^\bK \;,\; \|u_\theta -q_{d,\theta}\|_{C^2(\bK)} \leq \frac{c_k}{d^k}.
    \end{equation}
    On $\bX$ we have, for the second constraint in \eqref{eq:stokesd},
    \begin{eqnarray*}
        p_{d,\theta} - \Delta q_{d,\theta} - 1 & = & w_\theta -  \Delta u_\theta - 1 + p_{d,\theta} -  w_\theta + \Delta \ (u_\theta - q_{d,\theta}),
    \end{eqnarray*}
    and hence, from \eqref{eq:bound1ConVolLow} and \eqref{eq:wthetaapprox}, we get
    \begin{equation}\label{eq:VolPolyAppCon1Low}
        p_{d,\theta} - \Delta q_{d,\theta} - 1 \geq a_1 \theta - 2 \frac{c_k}{d^k}.
    \end{equation}
    Further, on $\bK$ we have by \eqref{eq:bound1ConVolUp} and \eqref{eq:wthetaapprox}
    \begin{eqnarray}\label{eq:VolPolyAppCon1Up}
        |p_{d,\theta} - \Delta q_{d,\theta} - 1 |& \leq & |w_\theta - \Delta u_\theta - 1| + |p_{d,\theta} - w_\theta | + |\Delta q_{d,\theta} - \Delta \ u_\theta | \notag \\
        & \leq & W + 2\theta a_1 + U + \theta a_3 + 1 + 2 \frac{c_k}{d^k}.
    \end{eqnarray}
    For the third constraint in \eqref{eq:stokesd} we have on $\partial \bX$, by \eqref{eq:VolCon2BoundLow} and \eqref{eq:wvuapprox},
    \begin{eqnarray}\label{eq:VolPolyAppCon2Low}
        -\grad \ q_{d,\theta} \cdot \grad \ h & = & -\grad \ u_\theta \cdot \grad \ h + (\grad \ u_\theta - \grad \ q_{d,\theta}) \cdot \ \grad \ h\notag\\
        & \geq & \theta a_2 + (\grad \ u_\theta - \grad \ q_{d,\theta}) \cdot \ \grad \ h \geq \theta a_2 - \frac{c_k}{d^k} \sqrt{a_2}.
    \end{eqnarray}
    Further, on $\bK$ we have, by \eqref{eq:VolCon2BoundUp} and \eqref{eq:wvuapprox},
    \begin{eqnarray}\label{eq:VolPolyAppCon2Up}
        |\grad \ q_{d,\theta} \cdot \grad \ h| & \leq & |{\grad \ } \vu_\theta \cdot \grad \ h| + |\grad \ (u_\theta - q_{d,\theta}) \cdot \ \grad \ h|\notag \\
        & \leq & (U + \theta a_3) \cdot a_3 + \frac{c_k}{d^k} a_3.
    \end{eqnarray}
    And for the third constraint in \eqref{eq:stokesd}, we have by \eqref{eq:wthetalower}
    \begin{equation}\label{eq:VolPolyAppCon3Low}
        2a_1 \theta - \frac{c_k}{d^k} \leq w_{\theta} - \frac{c_k}{d^k} \leq p_{d,\theta} \leq W + 2a_1 \theta + \frac{c_k}{d^k}.
    \end{equation}
    Before invoking the effective version of Putinar's Positivstellensatz, {\bf Theorem \ref{thm:OldBaldi}}, we make the choice
    $$\theta := \theta_d := \frac{c_k}{d^k} \max\left\{\frac{3}{a_1},\frac{1+\sqrt{2}}{a_2}\right\} \in \cO (d^{-k}).$$
    For this choice of $\theta$ we have for \eqref{eq:VolPolyAppCon1Low} and \eqref{eq:VolPolyAppCon2Low} on $\bX$ that
    \begin{equation}\label{eq:VolConvStokesPolLow}
        p_{d,\theta_d} - \Delta q_{d,\theta_d} - 1, \; -\grad \ q_{d,\theta_d} \cdot \grad \ h \geq \frac{c_k}{d^k} > 0.
    \end{equation}
    On $\bK$ it holds
    \begin{equation}\label{eq:VolConvStokesPoldLow}
        p_{d,\theta_d} \geq 6 \frac{c_k}{d^k} - \frac{c_k}{d^k} = 5 \frac{c_k}{d^k} > 0,
    \end{equation}
    in particular $(p_{d,\theta},q_{d,\theta})$ is feasible for \eqref{eq:stokesd}. Further, for the upper bounds \eqref{eq:VolPolyAppCon1Up},\eqref{eq:VolPolyAppCon2Up} and \eqref{eq:VolPolyAppCon3Low}, we have on $\bK$ for $d$ large enough (such that $\theta_d \leq 1, \nicefrac{c_k}{d^k} \leq 1$)
    \begin{equation}\label{eq:VolConvStokesPolUp}
        p_{d,\theta}, \; p_{d,\theta} - \Delta \ q_{d,\theta} - 1, \; -\grad \ q_{d,\theta} \cdot \grad \ h  \leq K
    \end{equation}
    for the constant $K := \max\{W + 2 a_1 + U + a_3 + 3, (U + a_3 +1) \cdot a_3 \}$. Now, by {\bf Theorem \ref{thm:OldBaldi}} and inserting \eqref{eq:VolConvStokesPolLow} and \eqref{eq:VolConvStokesPolUp}, the pair $(p_{d,\theta},q_{d,\theta})$ is feasible for the first two constraints in the $\ell$-th level of the moment-SoS hierarchy for \eqref{eq:stokesd} for
    \begin{equation*}
        \ell \geq \gamma(n,h) d^{3.5n \text{\L}}\left(\frac{K}{\nicefrac{c_k}{d^k}}\right)^{2.5 n\text{\L}}.
    \end{equation*}
    Similarly, by inserting \eqref{eq:VolConvStokesPoldLow} and \eqref{eq:VolConvStokesPolUp} into {\bf Theorem \ref{thm:OldBaldi}} for $\bK$ (note that $\text{\L} = 1$ in that case), we get that the pair $(p_{d,\theta},q_{d,\theta})$ is feasible for the third constraint in the $\ell$-th level of the moment-SoS hierarchy for \eqref{eq:stokesd} for
    \begin{equation*}
        \ell \geq \gamma(n,\vf) d^{3.5n}\left(\frac{K}{\nicefrac{5c_k}{d^k}}\right)^{2.5 n}
    \end{equation*}
    Taking the maximum of the just obtained two bounds for $\ell$ we get that $(p_{d,\theta},q_{d,\theta})$ for the optimization problem \eqref{eq:stokesd} for
    \begin{equation} \label{eq:ellVolSmooth}
        \ell \leq \max \{\gamma(n,h), \gamma(n,\vf) \} \left(\frac{K}{c_k}\right)^{2.5n \widehat{\text{\L{}}}} d^{(3.5 + 2.5k)n\widehat{\text{\L{}}}}.
    \end{equation}
    The cost of $(p_{d,\theta},q_{d,\theta})$ for the optimization problem \eqref{eq:stokesd} is bounded by
    \begin{align*}
        \displaystyle\int p_{d,\theta}\; \od \lambda_\bK & \leq \displaystyle\int w_\theta\; \od \lambda_\bK + \frac{c_k}{d^k} \lambda(\bK) \\
        & \overset{\eqref{eq:VolCostwtheta}}{=} \lambda(\bX) + 2^n
        \left(2a_1 \theta(d) + \frac{c_k}{d^k}\right) \\
        & = \lambda(\bX) + \frac{c_k}{d^k}2^n 
        \max\left\{\frac{3}{a_1},\frac{1+\sqrt{2}}{a_2}\right\}.
    \end{align*}
    This shows \eqref{eq:VolStokesConvRate}; namely, for $\varepsilon > 0$ take the smallest $d \in \N$ with $d \geq \left(2^n\frac{c_k}{\varepsilon} \max\{\frac{3}{a_1},\frac{1+\sqrt{2}}{a_2}\}
    \right)^\frac{1}{k}$. Then, from \eqref{eq:ellVolSmooth}, $\mathrm{Vol}_\ell -\lambda(\bX) \leq \varepsilon$ for $\ell \in \N$ with
    \begin{equation*}
        \ell \geq \max \{\gamma(n,h), \gamma(n,\vf) \} \left(\frac{K}{c_k}\right)^{2.5n \widehat{\text{\L{}}}} d^{(3.5 + 2.5k)n\widehat{\text{\L{}}}} \in \mathcal{O}\left(\left(\frac{1}{\varepsilon}\right)^{2.5n\widehat{\text{\L{}}} + \frac{3.5n\widehat{\text{\L{}}}}{k}}\right).
    \end{equation*}
    In other words, $\mathrm{Vol}_\ell -\lambda(\bX) \in \mathcal{O}\left(\ell^{-\frac{1}{2.5n {\widehat{\text{\L{}}}} + \nicefrac{3.5n{\widehat{\text{\L{}}}}}{k}}} \right)$. Taking $k\in \N$ arbitrarily large proves the claim.
    }
\end{proof}

\begin{rem}[Quantifying the efficiency of Stokes constraints] \leavevmode

    The upper bound on the convergence rate in {\bf Theorem \ref{thm:VolSmoothConRate}} improves the bound in {\bf Theorem \ref{thm: volconvrate}} by more than the power of two. This improvement originates from the smoothness of solutions $(w,u)$ of \eqref{eq:stokesd}.
\end{rem}

\section{Limitations and improved rates}\label{sec:Limitations}

\setcounter{equation}{55}

In this section, we emphasize limitations of our approach as well as consequences of further improved effective Positivstellensätz.

\paragraph{Limitations.} We identify two major obstacles preventing a direct application of our framework to several of the GMPs mentioned in the introduction. Those two limiting factors are
\begin{enumerate}
    \item Existence and regularity of minimizers (to apply Jackson-like inequalities)
    \item Existence of an inward-pointing direction (i.e. strict positivity, required by Putinar's theorem)
\end{enumerate}

For each of the two limiting factors, we state an example of an existing SoS formulation for two important problems in dynamical systems.

We begin with the approximation of the region of attraction from \cite{jones2021converse}. Consider a dynamical system of the form
\begin{equation}\label{eq:AppDynSys}
    \dot{\vx} = \vf(\vx),\quad \vx_0 \in \R^n
\end{equation}
for a polynomial vector field $\vf\in \R[\vx]^n$ with $\vf(\vO) = \vO$. We denote the corresponding flow map by $\bvarphi_t$. The region of attraction $\textsc{Roa}_\bX(\vf)$ of the equilibrium point ${\vx}^\star = \vO$ with respect to a constraint set $\bX$ is defined as
\begin{equation*}
    \textsc{Roa}_\vf:= \left\{\vx_0 \in \R^n ; \|\bvarphi_t(\vx_0) - \vx^\star\|_2 \underset{t\to\infty}{\longrightarrow} 0 \right\}.
\end{equation*}
It is assumed to have access to a compact set $\bX \subset \R^n$ for which we know $\mathrm{RoA}_\vf \subset \bX$. The authors in~\cite{jones2021converse} proposed {to approximate the ROA from the inside} by solving the following function LP: For parameters $\beta, \rho > 0$, consider
\begin{equation}\label{eq:LPJonesROA}
    \begin{tabular}{ccll}
         $\gmd$ := & $\inf\limits_{w \in \R[\vx]}$ & $\displaystyle\int_{\bX} w(\vx) \; \od \vx$\\
         & \text{s.t.} & $w(\vx) \geq 0$ & on $\bX$\\
         & & $\grad \, w(\vx) \cdot \vf(\vx) \leq -\rho \|\vx\|_2^{2\beta} (1-w(\vx))$ & on $\bX$\\
         & & $w(\vx) \geq 1$ & on  $\partial \bX$,
    \end{tabular}
\end{equation}
{for which any feasible point $w$ for~\eqref{eq:LPJonesROA} gives rise to an inner approximation of the $\mathrm{RoA}_{\vf}$ via  $w^{-1}([0,1))$, see \cite[Corollary 3]{jones2021converse}. Furthermore, } under certain regularity conditions, a (non-polynomial) {``}minimizer{''} $\Bar{w} \in C^1(\R^n)$ exists, and it holds $\mathrm{RoA}_{\vf} = \Bar{w}^{-1}([0,1))$ \cite{jones2021converse}. 

The optimization problem~\eqref{eq:LPJonesROA} is a function LP of the form~\eqref{eq:LPGeneral} for $T$ being the Lebesgue measure restricted to $\bX$, the sets $\bX_1,\bX_2,\bX_3$ given by
\begin{equation*}
    \bX_1 := \bX_2 := \bX \quad \text{and} \quad \bX_3 := \partial \bX,
\end{equation*}
and $\bY := \bX$, the operator $\cA = (\cA_1,\cA_2,\cA_3)$ given by
\begin{equation*}
    \cA_1 \, w := w, \quad \cA_2 \, w := -\grad \, w \cdot \vf +  \rho \|\cdot\|_2^{2\beta} w, \quad \cA_3 \, w := w\big|_{\partial \bX},
\end{equation*}
and on $\cY$ we use the induced topology from $C^1(\bY)$, and $\vg \in \cX$ is given by $\vg(\vx) := (0,\rho \|\vx\|_2^{2\beta},1)$.

However, in the equilibrium point $\vx^\star = 0$ it holds for any feasible $w$ that
\begin{equation*}
    \grad \, w(\vx^\star) \cdot \vf(\vx^\star) = \grad \, w(\vx^\star) \cdot 0 = 0 = 0(1-w(\vx^\star) = -\rho \|x^\star\|_2^{2\beta} (1-w(\vx^\star)).
\end{equation*}
This shows that no inward-pointing direction exists, the application of Putinar's Positivstellensatz is hampered and our framework does not apply to (\ref{eq:LPJonesROA}).

For an example that highlights difficulties between the interplay of existence and regularity of minimizers, we return to optimal control problems. Let us consider the following optimal control problem~\cite[Section 5.2.1]{liberzon2011calculus}
\begin{align}\label{eq:OptContrNonSmooth}
    V^\star(y_0) := \inf\limits_{\vy(\cdot),\vu(\cdot)} & \displaystyle\int_0^\infty \e^{-2t} \ y(t) \; \od t \notag \\
    \st \; \; & \dot{y}(t) = u(t)y(t)\\
              & y(0) = y_0\notag\\
              & y(t) \in [-1,1], \quad u(t) \in [-1,1] \notag
\end{align}
For given initial value $y_0$, the optimal value function $V^\star(y_0)$ can be computed explicitly from the following optimal control policy (depending on $y_0$)
\begin{equation}\label{eq:non_smooth_ocp_u}
    u^\star(t) = \begin{cases}
        -1, & y_0 \geq 0, \ t\geq 0\\
        1, & y_0 <0, \ 0\leq t \leq -\log (-y_0)\\
        0, & y_0 < 0, \ -\log (-y_0)< t.
    \end{cases}
\end{equation}
This control steers positive initial values closer to zero and negative initial values closer to $-1$, see Figure~\ref{fig:non_smooth_ocp_uy}.
\begin{figure}
    \centering
    \begin{subfigure}{.48\textwidth}
        \centering
        \includegraphics[width=\linewidth]{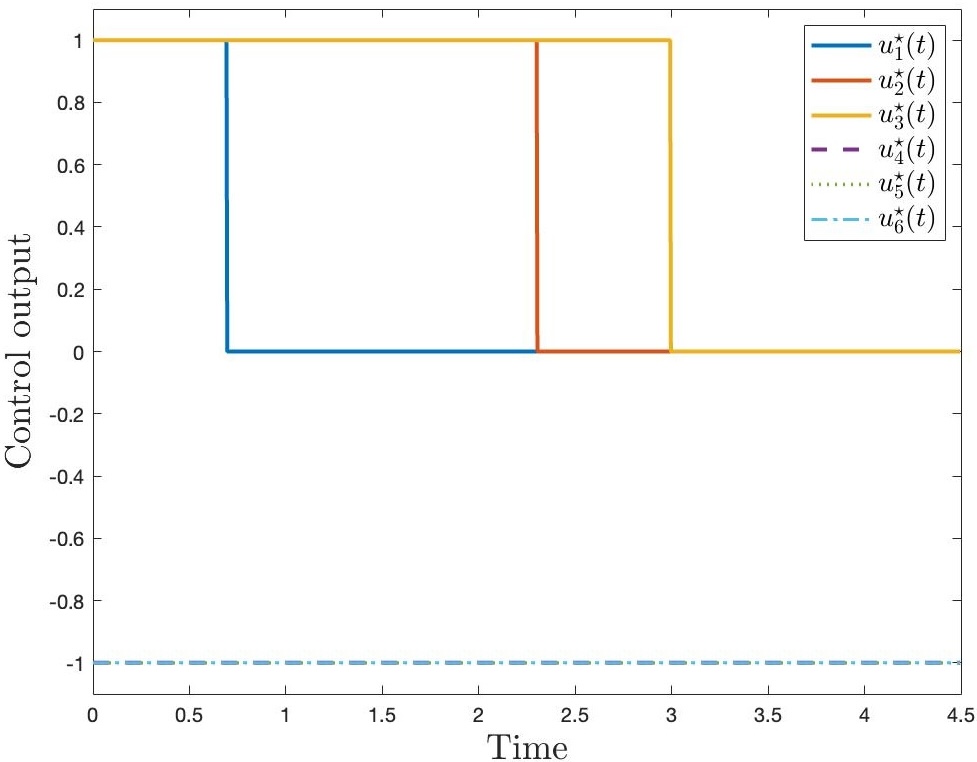}
    \end{subfigure}
    \hspace{4mm}
    \begin{subfigure}{.48\textwidth}
        \centering
        \includegraphics[width=\linewidth]{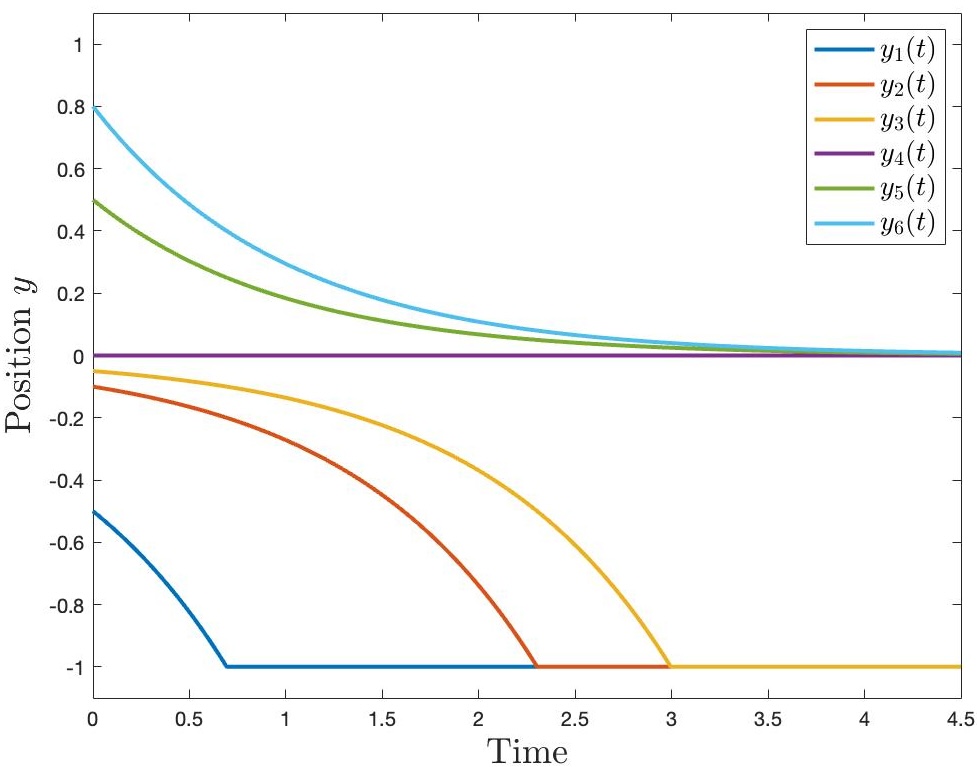}
    \end{subfigure}
    \caption{Left: Optimal control policies $u^\star(t)$ from~\eqref{eq:non_smooth_ocp_u} for initial values $y_0 \in \{-0.5,-0.1, -0.05, 0, 0.5, 0.8\}$. Right: corresponding trajectories $y(t)$.}
    \label{fig:non_smooth_ocp_uy}
\end{figure}
The optimal value function $V^\star$, illustrated in Figure~\ref{fig:non_smooth_ocp_v}, is given by
\begin{equation*}
    V^\star(y_0) = \begin{cases}
        \frac{1}{3} y_0, & y_0 \geq 0\\
        y_0 + \frac{1}{2} y_0^2, & y_0 <0.
    \end{cases}
\end{equation*}
The function $V^\star$ is not differentiable in $y_0 = 0$; hence, Condition~\ref{cond:ocp3} is not satisfied. Consequently, our procedure and the arguments from Section~\ref{sec:ocp} do not apply.

\begin{figure}
    \centering
        \includegraphics[width=0.7\linewidth]{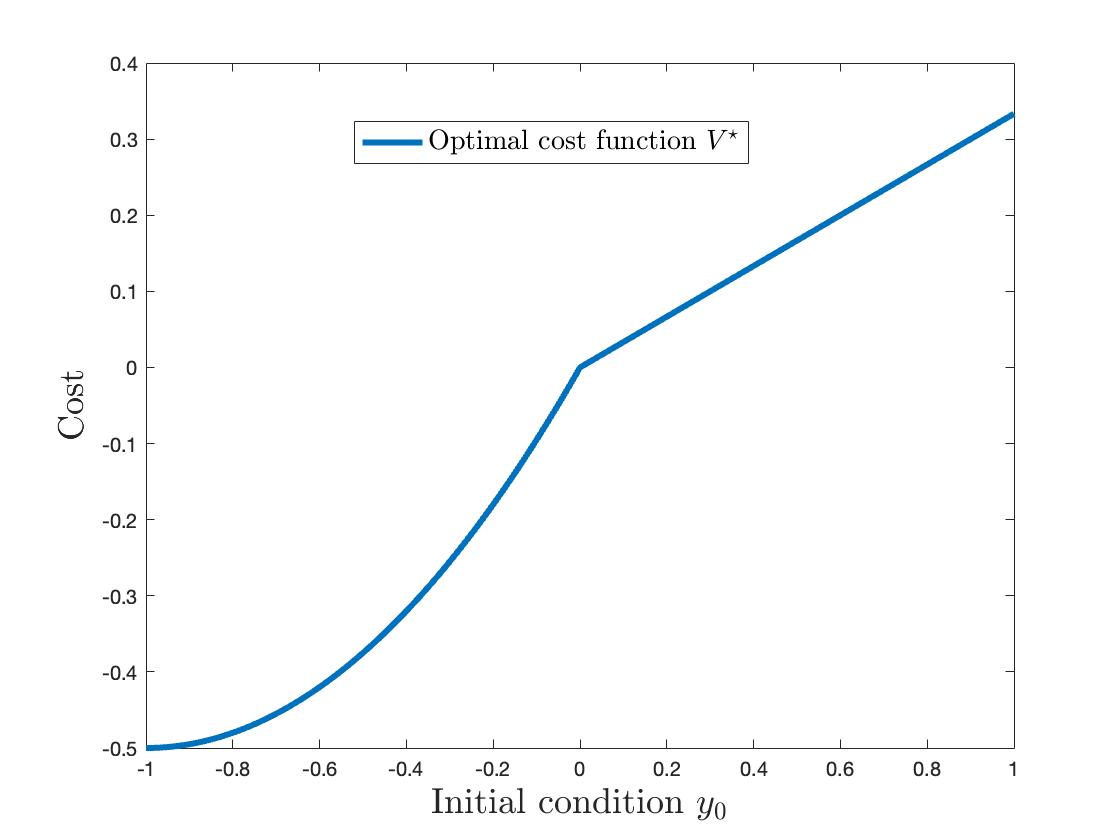}
        \caption{Optimal value function $V^\star$ for~\eqref{eq:OptContrNonSmooth}.}
        \label{fig:non_smooth_ocp_v}
\end{figure}

\begin{rem}
    In the existing examples, we observe that the existence and regularity of an optimal point for the functional LP is more of a limiting factor than the existence of an inward-pointing direction. The pragmatic argument is that as long as Positivstellensätze, such as Putinar's or Schmüdgen's Positivstellensatz \cite{schmudgen2017moment} are properly used, strict feasibility for the LP of at least one point is necessary. However, by Proposition \ref{lem:Inpointing}, this implies that the inward-pointing condition is satisfied.
\end{rem}

\begin{rem}\label{rem:LimitationsPrimalProb}
    Concerning the existence of optimal points, the primal problem on measures typically enjoys better properties than the functional LP. The reason is that for compact sets $\bX$, the space $\cM(\bX)$ is the dual space of $\cC(\bX)$ -- and thus, by the Banach-Alaoglu Theorem, bounded and closed sets in $\cM(\bX)$ are compact with respect to the weak$^\star$ topology. This property is leveraged for the existence of minimizers and plays a central role in the convergence of the moment-SoS hierarchy \cite{tacchi2022convergence}. Furthermore, an effective Positivstellensatz, such as {\bf Theorem \ref{thm:OldBaldi}}, transfers to a dual formulation on measures, see \cite[Theorem 5.7]{baldi2021moment}. While compactness arises rather naturally for the GMP, inducing compactness in the functional LP is more subtle. In {\bf Remark~\ref{rem:NoMinimizer}} we discuss relaxing the functional LP to allow for minimizers.
\end{rem}

\paragraph{Improved convergence rates.} We formulated our procedure so that any preferred effective Positivstellensatz can be applied. Thus, improved convergence rates in effective Positivstellensätze directly transfer to the convergence rates for the moment-SoS hierarchy for functional LPs that fall into our framework. For this reason, we want to emphasize which type of improvement in effective Positivstellensätze proves to be most powerful for our approach. In this direction, we see three main factors:
\begin{enumerate}
    \item Effective Positivstellensätze with mild behavior w.r.t the degree of the concerned polynomial.\label{item1}
    \item Effective Positivstellensätze without the need to embed $\bX$ into an ambient set $\bK$.\label{item2}
    \item Positivstellensätze tailored towards certain semi-algebraic sets, such as the hypercube, the unit ball, the sphere, or the simplex, with improved convergence rates.\label{item3}
\end{enumerate}

The first point refers to the fact that in our framework -- to obtain convergence -- we allow the degree of the polynomial of interest to go to infinity. We discuss this point in the following {\bf Remark~\ref{rem:DegreeBehaviour}}

\begin{rem}\label{rem:DegreeBehaviour}
    The bound on hierarchy level $\ell\in \N$ in {\bf Theorem \ref{thm:OldBaldi}} depends (strongly) on the degree $\deg (p)$ of the polynomial $p$. A milder growth with respect to $\deg (p)$ immediately translates to stronger convergence rates in our framework. Consequently, effective Positivstellensätze such as \cite{fang2021quantum,baldi2022moment} are disadvantageous for our approach compared {\bf Theorem~\ref{thm:OldBaldi}}, despite having certain very desirable properties for different application. 
\end{rem}

Point~\eqref{item2} in the above list, concerns the fact that, for applying {\bf Theorem~\ref{thm:OldBaldi}}, we need to bound the concerned polynomial on the ambient set $[-1,1]^n \supset \bX$ rather than only on $\bX$, which we discuss in the next remark.

\begin{rem}\label{rem:AmbientSet}
    The constant $\max\limits_{\vx \in [-1,1]^n} p(\vx)$ appears in the effective Positivstellensatz {\bf Theorem~\ref{thm:OldBaldi}} even though we are interested in $p$ on the set $\bX \subsetneq [-1,1]^n$. Effective Positivstellensätze that only consider the values of $p$ on $\bX$ instead would prove very useful for our approach.
    This is indicated by {\bf Lemma \ref{lem:MatteoBound}} and can be observed in the proofs of {\bf Theorems \ref{thm:VolSmoothConRate}, \ref{prop:PropOCPepsilon}} and \textbf{\ref{thm:SDEExitLocationConvRate}} where we obtained an upper bound of the function of interest on $[-1,1]^n$ by first extending it from $\bX$ to a function on $[-1,1]^n$. In~\cite{schlosser2024specialized}, we applied specialized Positivstellensätze for the unit ball and the sphere with no need for an ambient set and showed super-polynomial convergence rate {for the} exit location problem from {Section~\ref{sec:ExitLocation}}. For the Stokes-constraint volume computation problem in {Section~\ref{sec:Stokes}} such improved Positivstellensätze would highlight a much sharper difference between {\bf Theorems \ref{thm: volconvrate}} and \textbf{\ref{thm:VolSmoothConRate}}, because contrary to {\bf Theorem \ref{thm:VolSmoothConRate}}, {\bf Theorem \ref{thm: volconvrate}} would not benefit from such improvements.
\end{rem}

As we have indicated in the above remark, another possibility to get improved rates is to use effective Positivstellensätze tailored to specific sets and provide stronger bounds than general Positivstellensätze.

\begin{rem}[On specialized Positivstellensätze] \label{rem:spec_psatz} \leavevmode

    There exist specialized (and probably tighter) versions of effective Putinar Positivstellensätze on a variety of sets, such as the unit ball~\cite{slot2021christoffel}, the unit sphere~\cite{fang2021quantum,schlosser2024specialized} and, more recently, the hypercube~\cite{baldi2023psatz,baldi2021moment}. They enjoy refined analysis and therefore provide -- in some sense -- sharper convergence rates. Therefore, if available, they may be preferable. However, some of these effective Positivstellensätze do not come with explicit bounds depending on $\deg(p)$ in addition to $\max_\bX p$ and $\min_\bX p$. Some others scale unfavorably in the degree $\deg (p)$ \cite{baldi2022moment,fang2021quantum} as a trade-off for other properties that are less important for our application. In~\cite{slot2021christoffel,schlosser2024specialized} specialized Positivstellensätze on the unit ball, respectively the sphere, are presented with relatively well-behaved growth in $\deg (p)$.
\end{rem}
    
\section{Conclusion}\label{sec:Conclusion}

We state a structured approach to obtaining convergence rates for the moment-SoS hierarchy for the generalized moment problem. For the analysis of the convergence rates, we distinguish three important objects and properties. Namely, the existence and regularity of optimal points, an effective version of Putinar's Positivstellensatz, and a geometric feasibility condition (see the inward-pointing condition in Section \ref{sec:InwardPointingCond}). Our proposed procedure points out how those properties interact, and it is demonstrated to obtain upper bounds on the convergence rate for certain instances of the moment-SoS hierarchy: Using recent improvements on an effective version of Putinar's Positivstellensatz, we build upon and strongly improve existing convergence rates for the optimal control and the volume computation of a semialgebraic set; and we give an original convergence rate for a moment-SoS hierarchy of exit location computation for stochastic differential equations. We hope our work provides a guideline and the necessary tools for computing convergence rates of the moment-SoS hierarchy for various generalized moment problems that are actively formulated in the field in recent and following years.

Future work and improvement of effective Positivstellensätze can be integrated within our work simply by applying the most suited available convergence rate for Putinar's Positivstellensatz. Furthermore, we observe in our analysis that a well-suited effective Positivstellensatz could strongly further improve the convergence rate. As mentioned in {\bf Remark \ref{rem:AmbientSet}}, particularly advantageous for our method would be an effective Positivstellensatz -- for a polynomial $p$ on a semialgebraic set $\bX$ -- that only takes into account the values of $p$ on $\bX$ without the need of bounding its value on an ambient set (such as the hypercube in {\bf Theorem \ref{thm:OldBaldi}}). Similarly, specialized Positivstellensätze could be improved by expliciting all the terms in their degree bounds. Considering the recent improvement and active work on degree bounds for Positivstellensätze, we see here a very interesting, exciting, and promising development for further improvements of existing convergence rates for the moment-SoS hierarchy for generalized moment problems, as well as quantitative analysis of many other moment-SoS-based methodologies that will appear in the future.

Another future direction could include an analysis of the primal problem on measures. The primal problem has the advantage that, due to better compactness properties in the space of measures, optimal points often exist for the primal problem on measures -- even when the dual problem on functions does not have one. In \textbf{Section \ref{sec:Limitations}}, we discuss that non-existence of optimal points is a limiting factor for our approach. See \textbf{Remark \ref{rem:LimitationsPrimalProb}} and \textbf{Remark \ref{rem:NoMinimizer}}. Along a similar line of reasoning is inferring the desired properties from a minimizing sequence instead of an optimal point. In contrast to optimal points, minimizing sequences always exist. Due to its increased generality, this approach comes with the need for a finer and more tailored analysis of the problem at hand.

We think it is important to mention that the asymptotic analysis of the moment-SoS hierarchy for generalized moment problems might not transfer to practical applications. The reason is twofold. Firstly, current computational capacities restrict the computation of the moment-SoS hierarchy already for medium-sized problems to low-degree instances. Secondly, the conditioning of the $\ell$-th level of the moment-SoS hierarchy gets worse with increasing $\ell \in \N$, hampering the convergence in practice. In other words, this work essentially addressed the recasting from the infinite-dimensional GMP into SoS programming problems, while future works will shift the focus onto the translation from SoS programming to actual SDP, involving deeper investigations on what polynomial basis to choose in that process (the usual one being the numerically ill-behaved basis of monomials).

\appendix

\section{Duality in the moment-SoS hierarchy}

The three steps displayed in section~\ref{subsec:hierarchy} can be complemented as follows:

{\bf Step 4:} Using Lagrange duality, from the SoS tightening \eqref{eq:sosd} we deduce the following moment \textit{relaxation}:

\begin{equation} \label{eq:momd}
\fcolorbox{red}{white}{$
\begin{array}{lcl}
\op_{\ell} := & \sup\limits_{Z \in \cQ_\ell(\vh)'} & \langle Z , g \rangle \\
& \st & \cA_\ell' \, Z = T\left|_{\R_{d_\ell}[\vy]}\right.
\end{array}
$}
\end{equation}

\noindent where the ``projected adjoint'' $\cA_\ell' := (\cA|_{\R_{d_\ell}[\vx]})' : \R_{2\ell}[\vx]' \rightarrow \R_{d_\ell}[\vy]'$ coincides with $\cA'$ on $\R_{2\ell}[\vx]'$ via
$$ \forall \bbeta \in \N^n_{d_\ell}, \qquad \langle \vy^{\bbeta}, \cA'_\ell\, Z \rangle = \langle \vy^{\bbeta}, \cA'\, Z \rangle = \langle \varphi_{\bbeta}, Z \rangle $$
under {\bf Assumption \ref{asm:momop}} and with $\deg(\varphi_{\bbeta}) \leq 2 \ell$ by {\bf Corollary \ref{cor:momop}} and its proof.

Several results, ranging from practical to theoretical, come with the moment-SoS tightenings and relaxations. First, \cite[Proposition 2.1]{lasserre2009moments} gives a representation of $\cQ_\ell(\vh)$ with {positive semidefinite matrices} (and hence by duality $\cQ_\ell(\vh)'$ is represented by linear matrix inequalities), so that the moment relaxations and SoS strengthenings are equivalent to semidefinite programming (SDP) problems.

Second, a strong duality theorem can be stated for the moment-SoS hierarchy:

\begin{thm}[Strong duality in the hierarchy; extension of {\cite[Proposition 6]{tacchi2022convergence}}] \label{thm:msos-dual} \leavevmode

    Assume that one of the following conditions holds:
    \begin{enumerate}
        \item $\exists \ell \in \N, B>0$ s.t. $\forall Z \in \cQ_\ell(\vh)'$ feasible for the moment relaxation in~\eqref{eq:momd}, one has $\langle Z,1 \rangle \leq B$. 
        \item $\exists \hat{w} \in \R[\vy]$ with $\cA \, \hat{w} > \delta > 0$ on $\bX$.
        \item $g \geq 0$ on $\bX$ and {\bf Condition~\ref{condition:slater}} holds.
    \end{enumerate}
    Then, strong duality holds in the hierarchy above some degree $\ell_{\min}$:
    $$ \forall \ell \geq \ell_{\min}, \qquad \op_\ell = \gmd_\ell $$
\end{thm}
\begin{proof}
    The fact that condition \textit{(1)} implies strong duality at degree $\ell$ and above is \cite[Proposition 6]{tacchi2022convergence}. Here we prove that conditions \textit{(2)} and \textit{(3)} imply condition \textit{(1)}. First, we have already proved that \textit{(3)} $\Longrightarrow$ \textit{(2)} at the end of section~\ref{sec:gmp}, so it remains to prove \textit{(2)} $\Longrightarrow$ \textit{(1)}, which can be done as in {\bf Lemma~\ref{lem:StricPosImageA}}, with some additional arguments. Let $\hat{w}$ be as in condition \textit{(2)} and let $\ell \in \N$ such that $\cA \, \hat{w} - \delta \in \cQ_{\ell}(\vh)$ (such $\ell$ exists by {\bf Assumption~\ref{asm:momop}} and {\bf Theorem~\ref{thm:psatz}}). Then, it holds 
    $$\deg(\cA \, \hat{w}) = \deg(\cA \, \hat{w} - \delta + \delta) \leq \max\left(\deg(\cA \, \hat{w} - \delta), \deg(\delta)\right) \leq 2\ell$$ so that $\langle Z, \cA \,\hat{w} \rangle $ is well defined for $Z \in \cQ_\ell(\vh)'$. Now, let $Z \in \cQ_\ell(\vh)'$ be feasible in~\eqref{eq:momd}. By {\bf Corollary~\ref{cor:momop}}, $\cA \, \hat{w}$ has a preimage $w_\ell \in \R_{d_\ell}[\vx]$ such that $\cA \, \hat{w} = \cA \, w_\ell$, and thus it holds 
    $$\langle Z,  \cA\,\hat{w}\rangle = \langle Z,  \cA\,w_\ell\rangle = \langle \cA_\ell' \, Z, w_\ell \rangle = \langle T, w_\ell \rangle$$ 
    ($\deg(w_\ell) \leq d_\ell$ is needed for the adjoint operation to be well defined, as $\cA'_\ell \, Z \in \R_{d_\ell}[\vy]')$. Eventually, defining $B := \delta^{-1}\langle T, w_\ell\rangle$, it holds
    \begin{align*}
        0 & \leq \delta \langle Z, 1 \rangle \\
        & = \langle Z, \cA \, \hat{w} \rangle - \langle Z, \cA \, \hat{w} \rangle + \delta \langle Z, 1 \rangle \\
        & = \langle T , w_\ell \rangle - \langle Z, \cA \, \hat{w} - \delta \rangle \\
        & \leq \langle T, w_\ell \rangle = \delta \, B,
    \end{align*}
where the first inequality is due to $Z \in \cQ_\ell(\vh)'$ and $1 \in \cQ_\ell(\vh)$, and the second inequality is due to $Z \in \cQ_\ell(\vh)'$ and $\cA \, \hat{w} - \delta \in \cQ_{\ell}(\vh)$. As $\delta > 0$, this shows that \textit{(1)} holds and concludes the proof.
\end{proof}
\begin{rem}
    Condition \textit{(1)} in~{\bf Theorem~\ref{thm:msos-dual}} is a rephrazing of {\bf Condition~\ref{condition:tacchi}} with $Z$ instead of $\mu$.
\end{rem}

Eventually, the following theorem ensures strong convergence guarantees of the corresponding numerical scheme:

\begin{thm}[Convergence of the moment-SoS hierarchy {\cite[Theorem 4 \& Corollary 8]{tacchi2022convergence}}] \label{thm:lascv} \leavevmode

Assume that one of the conditions in {\bf Theorem~\ref{thm:msos-dual}} holds. Then, under {\bf Assumption~\ref{asm:poprateOld}} there exists $\ell_{\min} \in \N \ \st \ \forall \ell \geq \ell_{\min}$ $$\gmd_{\ell} = \op_{\ell} \underset{\ell\to\infty}{\longrightarrow} \op = \gmd.$$

Moreover, if \eqref{eq:smp} has a unique solution $\mu^\star$, then for optimal $Z_\ell$ in~\eqref{eq:momd} it holds
$$\forall \balpha \in \N^m, \qquad \langle \vx^{\balpha} , Z_\ell \rangle \underset{\ell\to\infty}{\longrightarrow} \int \vx^{\balpha}\; \od\mu^\star(\vx).$$
\end{thm}

\section{The functional LP and the GMP in product spaces}\label{appendix:ProductSpaces}

Let $N,M \in \N^\star$, $\vn = (n_1,\ldots,n_N) \in \N^N$, $\vm = (m_1,\ldots,m_M) \in \N^M$. For $i\in \lint N\rint$ and $j \in \lint M \rint$, let $\bX_i \Subset \R^{n_i}$ and $\bY_j \Subset \R^{m_j}$ be compact. We set

\vspace*{1em}

\begin{center}
\begin{tabular}{ll}
$\cX := \cC(\bX_1)\x\ldots\x\cC(\bX_N)$,  & $\cY := \cP(\bY_1)\x\ldots\x\cP(\bY_M)$.\\
$\cX' := \cM(\bX_1)\x\ldots\x\cM(\bX_N)$,  & $\cY' \:= \cM(\bY_1)\x\ldots\x\cM(\bY_M)$,  \\\\
\end{tabular}
\end{center}

\noindent We equip $\bar{\cX},\bar{\cX}',\cY,\cY'$ with the product topology. For $\vv = (v_1,\ldots,v_N) \in \cX$ and $\bmu = (\mu_1,\ldots,\mu_N) \in \cX'$, we define the vector integral as
$$ \int \vv \cdot \od\bmu := \sum_{i=1}^N \int v_i \; \od\mu_i. $$

We recall the general LPs from~\eqref{eq:LPGeneral}

\begin{equation}
\fcolorbox{red}{white}{$
\begin{array}{lclclcl}
\gmd = & \inf\limits_{\vw \in \cY} & \langle \bT ,  \vw \rangle & \quad \text{ and } \quad \quad & \op = & \sup\limits_{\mu \in \cX'} & \displaystyle\int g \; \od \mu\\
& \st & \forall i \in \lint N \rint & & & \st & \forall i \in \lint N \rint, \quad \mu_i \in \cM(\bX_i)_+ \vphantom{\sup\limits_{\mu \in \cM(\bX)} \displaystyle\int}\\
& & (\cA \, \vw - g_i)_i \in \cC(\bX_i)_+ & & & & \cA' \, \mu = \bT
\end{array}$}\tag{\text{\ref*{eq:LPGeneral}}}
\end{equation}

Concerning the functional LP and the GMP we made {\bf Assumptions~\ref{asm:existenceSol},~\ref{asm:momop},~\ref{asm:poprateOld}} and {\bf Conditions~\ref{condition:slater},~\ref{condition:tacchi}}. We now want to extend those to the setting of this section. 

{\bf Assumptions~\ref{asm:existenceSol}} states that a feasible solution to the functional LP exists. Therefore no adjustment has to be made.

Addressing {\bf Assumption~\ref{asm:momop}}, we first need to extend the notion of polynomial operators to product spaces.

\begin{rem}[Polynomial operators and moment operators on product spaces]
    The generalization of polynomial and moment operators to linear operators $\cA: \cP(\bY_1)\times \ldots \times \cP(\bY_M) \rightarrow \cC(\bX_1)\times \ldots \cC(\bX_N)$ is straightforward. {Such an operator $\cA$ is a polynomial operator if $\cA\left(\cP(\bY_1)\x\ldots\x\cP(\bY_M)\right) \subset \cP(\bX_1)\x\ldots\x\cP(\bX_N)$. Moment operators on product spaces are adjoints of polynomial operators}. The results from {Section~\ref{sec:lasserre}} on polynomial operators and moment operators generalize as well.
\end{rem}

For the general case, {\bf Assumption~\ref{asm:momop}} reads

\begin{manualasm}{\ref*{asm:momop}'}[Polynomial operator and $\vg$ polynomial] \label{asm:momopGen} \leavevmode
The operator $\cA$ is a polynomial operator and $\vg \in \cP(\bX_1)\times \ldots \cP(\bX_N)$.
\end{manualasm}

For product spaces the {\bf Assumption~\ref{asm:poprateOld}} generalizes simply by applying it componentwise.

\begin{manualasm}{\ref*{asm:poprateOld}'}\label{asm:poprateOldGen} \leavevmode For $i \in \lint N \rint$, the sets $\bX_i$ are compact basic semialgebraic sets given by $\bX_i := \bS(\vh_i) \subset \R^{n_i}$ with $\vh_i = (h_{i1},\ldots,h_{ir_i}) \in \R[\vx_i]^{r_i}$ for some $\vr = (r_1,\ldots,r_N) \in (\N^\star)^N$, and for which it holds
\begin{enumerate}
    \item $1 - \vx^\top \vy \in \cQ(\vh)$ \emph{(normalized Archimedean property)},
    \item $\forall j \in \lint r_i \rint, \quad \|h_{ij}\| := \max\limits_{\vx \in [-1,1]^{n_i}} |h_{ij}(\vx)| \leq \frac{1}{2}$
\end{enumerate}
\end{manualasm}

Similarly, {\bf Conditions~\ref{condition:slater},~\ref{condition:tacchi}} apply componentwise. They read
\begin{manualcond}{\ref*{condition:slater}'}[Slater {\cite{slater1950lagrange}}] \label{condition:slaterGen}
    $\exists \overset{\circ}{\vw} \in \cY \ \st \ \forall i \in \lint N \rint, \ (\cA' \,\overset{\circ}{\vw})_i - g_i \in \cC(\bX_i)_\oplus$.
\end{manualcond}
\begin{manualcond}{\ref*{condition:tacchi}'}[Primal compactness {\cite{tacchi2021thesis}}] \label{condition:tacchiGen}
    $\exists B > 0 \ \st \ \forall \bmu \in \cX'$ feasible for the generalized moment problem in~\eqref{eq:LPGeneral}, one has $\forall i \in \lint N \rint, \ \displaystyle \int 1 \; \od\mu_i \leq B $.
\end{manualcond}

Similary to {\bf Lemma~\ref{lem:StricPosImageA}} the {\bf Conditions~\ref{condition:slaterGen},~\ref{condition:tacchiGen}} hold if there exists $\hat{\vw} \in \cY$ with $(\cA\ \hat{\vw})_i > 0$ on $\bX_i$ for all $\forall i \in \lint N \rint$.

\section{On norm equivalence in polynomial spaces}\label{appendix:MattBound}

\begin{lem}

    Let $\bX \subset \bK := [-1,1]^n$ satisfy {\bf Condition \ref{cond:ocp}.2}. For any nonnegative polynomial $p \in \cP(\bX)_+$, it holds
    \begin{equation} \tag{\ref*{eq:BOUND}}
        \|p\| = \max\limits_{\vx \in \bK}\{p(\vx)\} \leq \left(1 + \frac{\deg(p)^2}{4}\left(\nicefrac{2}{b}\right)^{\deg(p)+1}\right) \|p\|_\infty^\bX,
    \end{equation}
    where $b \in (0,1)$ is such that $[-b,b]^n \subset \bX$ (whose existence is guaranteed by {\bf Condition \ref{cond:ocp}.2}).
\end{lem}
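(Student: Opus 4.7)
The plan is to reduce the multivariate claim to a one-dimensional Markov/Chebyshev estimate along a line through the origin. Let $\vx^\star \in \bK$ be a point where $p$ attains its maximum on $\bK$. If $\max_\bK p \leq 0$ or $\vx^\star = \vO$, the bound follows immediately from $\vO \in [-b,b]^m \subset \bX$, so we may assume $\vx^\star \neq \vO$ and $p(\vx^\star) > 0$. Consider the univariate polynomial $q(t) := p(t\vx^\star) \in \R[t]$, which has degree at most $d := \deg(p)$ and satisfies $q(1) = p(\vx^\star) = \|p\|$.

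The crucial observation is that for every $t \in [-b,b]$ and every $i \in \lint m \rint$ we have $|t\, x^\star_i| \leq b \cdot 1 = b$, hence $t\vx^\star \in [-b,b]^m \subset \bX$, and by nonnegativity of $p$ on $\bX$ we obtain $0 \leq q(t) \leq \|p\|_\infty^{\bX}$ throughout $[-b,b]$. In particular $\|q\|_{[-b,b]} \leq \|p\|_\infty^{\bX}$: the multivariate control on $\bX$ becomes univariate control on $[-b,b]$. To reach the value $q(1)$ I would write
\[
q(1) = q(b) + \int_b^1 q'(s)\, ds,
\]
bound $|q(b)| \leq \|p\|_\infty^{\bX}$ directly, and estimate the derivative in two steps. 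First, a rescaled Markov inequality on $[-b,b]$ gives $\|q'\|_{[-b,b]} \leq (d^2/b)\,\|q\|_{[-b,b]} \leq (d^2/b)\,\|p\|_\infty^{\bX}$. Second, since $q'$ is a polynomial of degree at most $d-1$, Chebyshev's extremal property transfers the bound from $[-b,b]$ to the larger interval $[-1,1]$: $|q'(s)| \leq T_{d-1}(1/b)\,\|q'\|_{[-b,b]}$ for every $s \in [-1,1]$, where $T_{d-1}$ is the Chebyshev polynomial of the first kind.

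Assembling these estimates and using the elementary bound $T_{d-1}(1/b) \leq (2/b)^{d-1}$ for $b \in (0,1]$ (which follows from $T_n(R) = \tfrac{1}{2}((R+\sqrt{R^2-1})^n + (R-\sqrt{R^2-1})^n)$ together with $R+\sqrt{R^2-1} \leq 2R$ and $R-\sqrt{R^2-1} \leq 1$ whenever $R \geq 1$) yields
\[
q(1) \leq \|p\|_\infty^{\bX} \Bigl(1 + (1-b)\, \tfrac{d^2}{b} \, (2/b)^{d-1}\Bigr) \leq \|p\|_\infty^{\bX} \Bigl(1 + \tfrac{d^2}{2}(2/b)^d\Bigr),
\]
which is in fact sharper than the target inequality, since $b \leq 1$ implies $(d^2/2)(2/b)^d \leq (d^2/4)(2/b)^{d+1}$. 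The only slightly delicate step is the Chebyshev-extremal bound on $q'$: it is a standard consequence of the extremal property of $T_n$ on $[-1,1]$, rescaled to $[-b,b]$ via $y \mapsto q(by)$, and from there the rest is routine arithmetic.
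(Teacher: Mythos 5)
Your proof is correct and takes a genuinely different, more elementary route than the paper's. The paper invokes \cite[Lemma~28]{baldi2023psatz} as a black box to compare $\min_{\bK_\rho}\varphi$ with $\min_\bK\varphi$ for nonnegative $\varphi$ on a rescaled hypercube, applied to $\varphi(\vx) := \max_\bX p - p(b\vx)$; you instead reduce to a line through the origin and combine the one-dimensional Markov inequality on $[-b,b]$ with the Chebyshev comparison theorem for growth off the interval. The two arguments are close in spirit (both ultimately rest on Chebyshev-type growth) but your route is self-contained and actually produces a slightly sharper constant: because the Chebyshev comparison is applied to $q'$ of degree $d-1$ rather than to a degree-$d$ object, you end up with $T_{d-1}(1/b)$ where the paper's argument effectively carries $T_d(1/b)$. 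The arithmetic at the end is fine: $(1-b)(d^2/b)(2/b)^{d-1} = (1-b)(d^2/2)(2/b)^d \le (d^2/2)(2/b)^d \le (d^2/4)(2/b)^{d+1}$ precisely because $b\le 1$.

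One caveat worth flagging: while the lemma statement defines $\|p\|$ as $\max_{\vx\in\bK} p(\vx)$, the paper's own Appendix~A proof goes out of its way to also bound $-\min_\bK p$, remarking that it is really $\max_\bK|p|$ that is needed (consistent with the sup-norm convention in the cited Baldi--Mourrain reference). If that absolute-value interpretation is the operative one, your argument needs a symmetric companion step: take $\vx^{**} \in \arg\min_\bK p$, set $\tilde q(t) := p(t\vx^{**})$, note $\tilde q \ge 0$ on $[-b,b]$ with $\|\tilde q\|_{[-b,b]} \le \|p\|_\infty^\bX$, and the identical Markov--Chebyshev chain gives $\tilde q(1) \ge -\left(d^2/2\right)(2/b)^d\,\|p\|_\infty^\bX$, so $-\min_\bK p$ is covered by the same right-hand side. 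With that two-line addendum your proof matches the full strength of what the paper establishes.
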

    \begin{proof}
    The proof of \cite[Lemma 28]{baldi2023psatz} actually shows that, for $\varphi \in \cP(\bK)_+$ of degree $k$ and $\rho > 0$, defining $\bK_\rho := [-1-\rho,1+\rho]^n$, one has
    \begin{equation} \label{eq:bullshit} \tag{$\dagger$}
        \varphi^\star_{\bK_\rho} \geq \varphi^\star_{\bK} - T_k(1+\rho)\cdot\rho\cdot k^2 \cdot \max_\bK \varphi
    \end{equation}
    Where $T_k$ denotes the degree $k$ Chebyshov polynomial of the first kind. We apply this result to
    $$ \varphi(\vx) := \max_\bX p - p(b\,\vx), $$
    so that $k = \deg(\varphi) = \deg(p)$. Let $\rho := \frac{1-b}{b}$ so that $b\cdot\bK_{\rho} = \bK$. Then, one has
    \begin{align*}
        \varphi^\star_{\bK_\rho} & = \max_\bX p - \max_\bK p \\
        \varphi^\star_\bK & = \max_\bX p - \max_{b\cdot\bK} p \\
        \max_\bK \varphi & = \max_\bX p - \min_{b\cdot\bK} p
    \end{align*}
    which can be reinjected in \eqref{eq:bullshit} to get
    $$\cancel{\max_\bX p} - \max_\bK p \geq \cancel{\max_\bX p} - \max_{b\cdot\bK} p - T_k(\underset{\nicefrac{1}{b}}{\underbrace{1+\rho}})\cdot\rho\cdot k^2 \left(\max_\bX p - \min_{b\cdot\bK} p\right).$$
    This expression in turn rephrases, accounting for inequalities $\min_{b\cdot\bK} p \geq \min_\bX p \geq 0$ and $\max_{b\cdot\bK} p \leq \max_\bX p = \|p\|_\infty^\bX$ (because $b\cdot\bK \subset \bX$), as
    $$ \max_\bK p \leq \max_{b\cdot\bK} p + T_k(\nicefrac{1}{b})\cdot\frac{1-b}{b}\cdot k^2 \cdot \max_\bX p \leq \left(1 + T_k(\nicefrac{1}{b})\cdot\frac{1-b}{b}\cdot k^2\right) \|p\|_\infty^\bX. $$
    
    It remains to compute an upper bound of $T_k(\nicefrac{1}{b})$. Here we recall that the Chebyshov polynomials are defined by $T_0(s)=1$, $T_1(s)=s$ and the recurrence formula $T_{k+1}(s) = 2s\,T_{k}(s) - T_{k-1}(s)$ (for $k \geq 1$). Moreover, $T_k(s)$ oscillates between $-1$ and $1$ for $s \in [-1,1]$ (because $T_k(\cos\theta) = \cos(k\cdot\theta)$), and is always strictly increasing on $(1,+\infty)$, so that for $s>1$ one has $T_k(s) > T_k(1) = 1 > 0$. Eventually, we can prove that, for all $k \geq 1$, it holds
    \begin{equation} \label{cheby} \tag{$\ddagger$}
        T_k(\nicefrac{1}{b}) \leq \frac{1}{2} \left(\frac{2}{b}\right)^k.
    \end{equation}
    First, we check that this holds for $k=1$: $T_1(\nicefrac{1}{b}) = \nicefrac{1}{b} \leq \nicefrac{1}{2} \cdot \nicefrac{2}{b}$. Second, defining $s = \nicefrac{1}{b}$, we notice that by design of $b \in (0,1)$, $s > 1$. Then we simply use the recurrence formula to get, for $k \geq 1$:
    $$T_{k+1}(s) = 2s \, T_{k}(s) - \underset{>0}{\underbrace{T_{k-1}(s)}} \leq 2s \, T_{k}(s)$$
    so that, if $T_k(s) \leq \nicefrac{1}{2} (2s)^k$ (which holds for $k=1$) then $T_{k+1}(s) \leq \nicefrac{1}{2} (2s)^{k+1}$. This way, we get
    $$ \max_\bK p \leq \left(1 + \frac{1}{2}\left(\frac{2}{b}\right)^k\cdot\frac{1-b}{b}\cdot k^2\right) \|p\|_\infty^\bX. $$
    Finally, let us not forget that it is $\max_\bK |p|$ that we want to upper bound, and not only $\max_\bK p$, so we still have to upper bound $\max_\bK (-p) = -\min_\bK p = -p^\star_\bK$. Again we use \eqref{eq:bullshit} with $\varphi(\vx) = p(b\vx)$ to get
    \begin{align*}
        p^\star_\bK & = \varphi^\star_{\bK_\rho} \\
        & \geq \varphi^\star_\bK - T_k(1+\rho)\cdot\rho\cdot k^2 \cdot \max_\bK \varphi \\
        & \geq \min_{b\cdot\bK} p - \frac{1}{2}\left(\frac{2}{b}\right)^k\cdot\frac{1-b}{b}\cdot k^2 \cdot \max_{b\cdot\bK}p \\
        & \geq \min_{\bX} p - \frac{1}{2}\left(\frac{2}{b}\right)^k\cdot\frac{1-b}{b}\cdot k^2 \cdot \max_{\bX}p \\
        & \geq - \frac{1}{2}\left(\frac{2}{b}\right)^k\cdot\frac{1-b}{b}\cdot k^2 \cdot \|p\|_\infty^\bX
    \end{align*}
    and hence
    $$ \max_\bK(-p) = -p^\star_\bK \leq \frac{1}{2}\left(\frac{2}{b}\right)^k\cdot\frac{1-b}{b}\cdot k^2 \cdot \|p\|_\infty^\bX $$
    leading to 
    $$ \|p\| \leq \left(1 + \frac{1}{2}\left(\frac{2}{b}\right)^{\deg(p)}\cdot\frac{1-b}{b}\cdot \deg(p)^2\right) \|p\|_\infty^\bX. $$
    Finally we deduce the announced inequality by observing that $(1-b) \in (0,1)$, so that
    $$ \frac{1}{2}\left(\frac{2}{b}\right)^{\deg(p)}\cdot\frac{1-b}{b} \leq \frac{1}{4}\left(\frac{2}{b}\right)^{\deg(p)+1} $$
    \end{proof}

\section{Extension of Hölder continuous functions}\label{appendix:Hölder}

\begin{lem}[Extension Lemma; {\cite[Lemma 6.37]{gilbarg1977elliptic}}] \leavevmode

 Let $k \geq 1$ be an integer and $a \in (0,1]$. Let $\bY \subset \R^m$ be a compact with $C^{k,a}$ boundary. Let $\bOm$ be an open and bounded set containing $\bY$. Then every function $w \in C^{k,a}(\bY)$ there exists an extension $\Bar{w} \in C^{k,a}(\bOm)$ with $w(\vy) = \Bar{w}(\vy)$ for all $\vy \in \bY$ and
\begin{equation} \tag{\ref*{eq:extlemma}}
    \|\Bar{w}\|_{C^{k,a}(\bOm)} \leq c \|w\|_{C^{k,a}(\bY)}
\end{equation}
for some constant $c = c(m,k,a,\bY,\bOm)$ independent of $w$.  
\end{lem}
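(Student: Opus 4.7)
The plan is to follow the classical Calderón--Hestenes--Seeley reflection scheme, reducing the global extension problem to a local one near the boundary via a $C^{k,a}$ partition of unity, and then exhibiting an explicit reflection formula that preserves $C^{k,a}$ regularity quantitatively. Since the statement is a verbatim restatement of Gilbarg--Trudinger~\cite{gilbarg1977elliptic}, the cleanest approach is to reconstruct their argument.

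First, I would cover $\partial\bY$ by finitely many open balls $B_1,\ldots,B_N$ in $\R^n$ on each of which there is a $C^{k,a}$-diffeomorphism $\Phi_j : B_j \to U_j \subset \R^n$ straightening $\partial\bY$, i.e. sending $B_j\cap\bY$ to $\{y_n \geq 0\}\cap U_j$ and $B_j\cap\partial\bY$ to $\{y_n=0\}\cap U_j$, with $\|\Phi_j\|_{C^{k,a}}$ and $\|\Phi_j^{-1}\|_{C^{k,a}}$ controlled by the boundary regularity of $\bY$. Adding one more open set $B_0 \Subset \mathrm{int}(\bY)$ to cover $\bY\setminus\bigcup_{j\geq 1} B_j$ and choosing a subordinate $C^\infty$ partition of unity $\{\chi_j\}_{j=0}^N$ with $\sum_j\chi_j\equiv 1$ on a neighborhood of $\bY$, the problem reduces to extending each $\chi_j w$, whose support meets $\partial\bY$ only through a flattened chart.

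The core step is the half-space extension. On $\R^n_+ := \{y_n \geq 0\}$, for $v \in C^{k,a}(\R^n_+)$ with compact support, define
\begin{equation*}
    E v(y',y_n) := \begin{cases} v(y',y_n), & y_n \geq 0,\\ \displaystyle\sum_{j=1}^{k+1} a_j \, v(y',-j\,y_n), & y_n < 0,\end{cases}
\end{equation*}
where the coefficients $a_1,\ldots,a_{k+1}$ are the unique solution of the Vandermonde system $\sum_{j=1}^{k+1} a_j \, (-j)^\ell = 1$ for $\ell=0,1,\ldots,k$. An elementary computation shows that the partial derivatives of order $\leq k$ match across $\{y_n=0\}$, so that $Ev \in C^k(\R^n)$, and the $a$-Hölder seminorm of the $k$-th derivatives on $\R^n$ is bounded by a constant times its value on $\R^n_+$, since differences $|Ev(y)-Ev(y')|$ for $y,y'$ on opposite sides of the hyperplane are controlled by inserting the foot point of the segment on $\{y_n=0\}$ and using the $a$-Hölder estimate in each half-space separately. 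This gives $\|Ev\|_{C^{k,a}(\R^n)} \leq C_k \|v\|_{C^{k,a}(\R^n_+)}$ with $C_k$ depending only on $n$, $k$ and $a$.

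To assemble, I would set $\bar w := \chi_0 w + \sum_{j=1}^N \chi_j \cdot \big[E((\chi_j w)\circ \Phi_j^{-1})\big]\circ \Phi_j$, multiplied by a further smooth cutoff supported in $\bOm$ that equals $1$ on a neighborhood of $\bY$ (which exists since $\bY \Subset \bOm$). Agreement with $w$ on $\bY$ is immediate from $\sum_j \chi_j = 1$ there and $Ev|_{\R^n_+} = v$. The norm bound \eqref{eq:extlemma} follows from the product/chain rule applied to each term, using the $C^{k,a}$ bounds on $\Phi_j$, $\Phi_j^{-1}$, $\chi_j$, and the cutoff, together with the half-space estimate above; all constants depend only on $n$, $k$, $a$, $\bY$ and $\bOm$, not on $w$. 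The one non-routine point is the Hölder estimate for $Ev$ across the hyperplane, but this is handled by the triangle inequality trick just described; the rest is bookkeeping.
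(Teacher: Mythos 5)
The paper does not prove this lemma; it is quoted verbatim from Gilbarg--Trudinger (Lemma~6.37) and used as a black box, so there is no ``paper's own proof'' to compare against. Your reconstruction follows the classical Hestenes--Seeley reflection argument, which is indeed the standard route in the literature (including the reference cited), so the overall strategy and the key half-space extension operator with the Vandermonde coefficients are exactly right.

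One bookkeeping slip in the assembly step: with $\bar w := \chi_0 w + \sum_{j\geq 1}\chi_j\cdot\bigl[E((\chi_j w)\circ\Phi_j^{-1})\bigr]\circ\Phi_j$, on $\bY$ you get $\bar w = \chi_0 w + \sum_j\chi_j^2 w$, which does not equal $w$ since $\chi_0+\sum_j\chi_j^2\neq 1$ in general. The outer factor should be a separate family of cutoffs $\eta_j\in C^\infty_c(B_j)$ with $\eta_j\equiv 1$ on $\mathrm{supp}\,\chi_j$ (used only to localize the reflected extension inside the chart), or equivalently one should work with $\chi_j^2$ normalized as the partition of unity. With that fix, agreement on $\bY$ holds and the norm estimate goes through as you describe; the rest of the argument is sound.
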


As a corollary, we obtain the following extension result that aims at preserving the maximum value for the extension.

\begin{cor}
    Let $\bY \subset \R^m$ be a compact with $C^{\infty}$ boundary and $f \in C^{\infty}(\bY)$. Then, for any $k\in \N$ and $\varepsilon >0$, there exists an extension $\Bar{f} \in C^{k}(\R^m)$ of $f$ with
\begin{equation}\label{eq:appendixExtensionBound} \tag{$\diamond$}
    \|\Bar{f}\|^{\R^m}_\infty \leq \|f\|_{\infty}^\bY + \varepsilon.
\end{equation}
\end{cor}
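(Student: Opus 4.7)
The plan is to combine the Extension Lemma with a smooth cutoff argument to localize the extension near $\bY$, where it is automatically close to $f$ by continuity, and kill it off elsewhere.

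First I would apply the Extension Lemma (with any fixed $k$ and $a$, say $a = 1$) on some open bounded set $\bOm$ strictly containing $\bY$, obtaining a $C^k$ extension $\tilde{f} \in C^k(\overline{\bOm})$ such that $\tilde{f}|_\bY = f$. A priori, the bound \eqref{eq:extlemma} does not control $\|\tilde{f}\|_\infty^{\overline{\bOm}}$ by $\|f\|_\infty^\bY$; it only gives a Hölder norm estimate. So directly, $\tilde{f}$ does not yet satisfy \eqref{eq:appendixExtensionBound}.

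Next, I would use uniform continuity of $\tilde{f}$ on $\overline{\bOm}$ to find an open neighborhood $U$ of $\bY$ with $\overline{U} \subset \bOm$ such that
\begin{equation*}
    |\tilde{f}(\vx)| \leq \|f\|_\infty^\bY + \varepsilon \qquad \text{for all } \vx \in \overline{U}.
\end{equation*}
This is possible because $\tilde{f} = f$ on the compact set $\bY$, so $|\tilde{f}| \leq \|f\|_\infty^\bY$ on $\bY$, and by uniform continuity this bound degrades by at most $\varepsilon$ on a sufficiently small open thickening.

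Then I would choose a smooth cutoff function $\chi \in C^\infty(\R^n)$ with $0 \leq \chi \leq 1$, $\chi \equiv 1$ on $\bY$, and $\mathrm{supp}(\chi) \subset U$. Such a $\chi$ exists by standard partition-of-unity/mollification arguments since $\bY$ is compact and $U$ is open. I define
\begin{equation*}
    \bar{f}(\vx) := \begin{cases} \chi(\vx) \, \tilde{f}(\vx), & \vx \in \bOm, \\ 0, & \vx \in \R^n \setminus \mathrm{supp}(\chi). \end{cases}
\end{equation*}
The two definitions agree on the overlap (both vanish there), and $\bar{f}$ is $C^k$ on $\R^n$ as a product of $C^k$ functions extended smoothly by zero outside a compact support. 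Moreover, $\bar{f}|_\bY = \chi|_\bY \cdot \tilde{f}|_\bY = f$, so $\bar{f}$ extends $f$. Finally,
\begin{equation*}
    |\bar{f}(\vx)| \leq |\chi(\vx)| \cdot |\tilde{f}(\vx)| \leq 1 \cdot (\|f\|_\infty^\bY + \varepsilon) \quad \text{on } U,
\end{equation*}
while $\bar{f} = 0$ off $U$, so \eqref{eq:appendixExtensionBound} holds globally on $\R^n$. I do not foresee a serious obstacle: the only subtlety is ensuring that the neighborhood $U$ can indeed be chosen inside $\bOm$ so that the cutoff construction works and the resulting function is genuinely $C^k$ across the boundary of $\mathrm{supp}(\chi)$, which follows from $\chi \in C^\infty$ and the smooth extension by zero.
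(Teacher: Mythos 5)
Your proof is correct and follows essentially the same strategy as the paper's: extend via the Extension Lemma, then multiply by a smooth cutoff supported on a small neighborhood of $\bY$ so that the resulting function agrees with $f$ on $\bY$, vanishes far away, and is bounded by $\|f\|_\infty^\bY + \varepsilon$ on the thin shell in between. The only cosmetic difference is that you invoke uniform continuity of $\tilde f$ on the compact $\overline{\bOm}$ to produce the shell, whereas the paper uses the explicit Lipschitz estimate $|\tilde f(\vx)-\tilde f(\vx-\vy)|\le \|\grad\tilde f\|_\infty\|\vy\|$ from the $C^1$ bound of the Extension Lemma to pick a quantitative thickening radius $\varepsilon/F$; both give exactly the needed bound and neither buys anything extra for the stated corollary.
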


\begin{proof}
    Let $\varepsilon > 0$ and $k\in \N$ and let $R> 0$ with $B_{0.5 R}(0) \supset \bY$. By the extension Theorem \ref{thm:HölderExtension} there exists an extension $\Tilde{f}\in C^k(B_R(0))$ of $f$ with $F:= \|\Tilde{f}\|_{C^1(B_R(0))} < \infty$. Without loss of generality we assume $\frac{\varepsilon}{F} < \frac{R}{2}$, otherwise we take a smaller $\varepsilon$. Set $\bOm:= \{\vx + \vy ; \vx \in \bY, \|\vy\| < \frac{\varepsilon}{F}\}$ and let $\phi\in C^\infty(\R^m)$ with $0\leq \phi\leq 1$ with $\phi = 1$ on $\bY$ and $\phi = 0$ on $\R^m \setminus \bOm$. We claim that the function $\Bar{f} := \phi \cdot \Tilde{f}$ (and extended by zero on $\R^m \setminus \bOm$) is $C^k$ and satisfies (\ref{eq:appendixExtensionBound}). From the construction, it follows that $\Tilde{f}$ is $C^k$. For $\vx \in \bY$ it holds $\Tilde{f}(\vx) = f(\vx)$ and for $\vx \in \R^m \setminus \bOm$ we have $\Tilde{f}(\vx) = 0$. It remains to bound $\Tilde{f}(\vx)$ for $\vx \in \bOm \setminus \bY$. Let $\vx \in \bOm \setminus \bY$ and $\vy$ with $\|y\| < \frac{\varepsilon}{F}$ such that $\vx - \vy \in \bY$. We have
    \begin{equation*}
        |\Tilde{f}(\vx)| \leq \left|\Tilde{f}(\vx - \vy) + \|\grad \Tilde{f}\|_\infty^{B_R(0)} \|y\|\right| \leq \|f\|_{\infty}^\bY + F \frac{\varepsilon}{F} = \|f\|_{\infty}^\bY + \varepsilon.
    \end{equation*}
\end{proof}

\section{Smooth solutions to the Poisson PDE} \label{appendix:Stokes}

Here we state the proof of \textbf{Theorem \ref{thm:smooth}} for the existence of smooth solutions of (\ref{eq:stokesd}) and \eqref{eq:poisson}.

\begin{proof} \textit{Of \textbf{Theorem \ref{thm:smooth}}.}
We prove this Theorem using two lemmas.
\begin{lem}[Existence of smooth source term] \label{lem:source} \leavevmode

There exists a smooth $\phi \in C^\infty(\R^n)$ such that
\begin{enumerate}
\item $\phi$ satisfies conditions {\normalfont (\ref*{eq:poisson}.c)}, {\normalfont (\ref*{eq:poisson}.d)} and $\phi = 1$ on $\R^n \setminus \bX$.
\item $\displaystyle\int_{\bX_i} \phi \; d\lambda = 0$ for all $i \in \{1,\ldots,\Omega\}$, where
$$ \bX = \bigsqcup_{i=1}^\Omega \bX_i$$
is the partition of $\bX$ into its connected components.
\end{enumerate}
\end{lem}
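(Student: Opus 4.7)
The plan is to construct $\phi$ by hand: start from the constant function $1$ and subtract, on each connected component $\bX_i$, a nonnegative smooth bump whose total mass is exactly $\lambda(\bX_i)$, so that the compatibility condition $\int_{\bX_i} \phi \, \od\lambda = 0$ holds automatically. This sidesteps any use of PDE or compactness machinery and reduces the entire lemma to elementary bookkeeping.

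First, I would invoke the fact that the bounded open semialgebraic set $\bX$ has only finitely many connected components, so $\Omega < \infty$ and each $\bX_i$ is a nonempty open subset of $\R^n$. For each $i \in \{1,\ldots,\Omega\}$, I would fix a point $\vx_i \in \bX_i$ and a radius $r_i > 0$ with the closed ball $\overline{B(\vx_i,r_i)} \subset \bX_i$, then pick a standard nonnegative bump $\psi_i \in C^\infty_c(\R^n)$ with $\mathrm{supp}(\psi_i) \subset B(\vx_i,r_i)$ and $\int \psi_i \, \od\lambda > 0$. By construction, each $\mathrm{supp}(\psi_i)$ is a compact subset of the open set $\bX_i$, and the $\mathrm{supp}(\psi_i)$ are pairwise disjoint across $i$.

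Next, set $c_i := \lambda(\bX_i) / \int \psi_i \, \od\lambda > 0$ and define
$$\phi(\vx) := 1 - \sum_{i=1}^\Omega c_i \, \psi_i(\vx).$$
All four required properties follow immediately: $\phi \in C^\infty(\R^n)$ as a finite linear combination of globally $C^\infty$ functions; since every $\psi_i$ vanishes outside $\bX_i \subset \bX$, one has $\phi \equiv 1$ on $\R^n \setminus \bX$, and in particular $\phi \equiv 1$ on $\partial \bX$ because $\bX$ is open and so $\partial \bX \cap \bX = \emptyset$, giving \eqref{eq:poisson}.d; the inequality $\phi \leq 1$ on $\bX$ follows from $c_i \psi_i \geq 0$, giving \eqref{eq:poisson}.c; and the disjointness of the supports combined with the choice of $c_i$ yields $\int_{\bX_i} \phi \, \od\lambda = \lambda(\bX_i) - c_i \int \psi_i \, \od\lambda = 0$ for each $i$.

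No step is genuinely hard: the argument is a textbook partition-and-bump construction, with the zero-mean constraint on each component enforced by an independent scalar rescaling, decoupled across components by the disjoint supports. The only subtlety worth spelling out is the requirement that each $\psi_i$ be supported strictly inside its open component, which is precisely what simultaneously guarantees global smoothness of $\phi$ on $\R^n$ and preserves $\phi \equiv 1$ on $\partial \bX$.
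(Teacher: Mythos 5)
Your proposal is correct and follows essentially the same construction as the paper: both subtract from the constant function $1$ a nonnegative smooth bump on each connected component $\bX_i$, scaled so that its integral equals $\lambda(\bX_i)$, which forces $\int_{\bX_i}\phi\,\od\lambda = 0$ while keeping $\phi\le 1$ in $\bX$ and $\phi\equiv 1$ off $\bX$. The only cosmetic differences are that you support your bump strictly inside a small ball and carry the normalization as a separate scalar $c_i$, whereas the paper lets the bump be supported on all of $\bX_i$ (equal to $1$ on an inner ball) and absorbs the normalization into the bump itself; you also helpfully remark explicitly that $\Omega<\infty$.
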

\begin{proof}
We work on a connected component $\bX_i$, $i \in \{1,\ldots,\Omega\}$. As $\bX_i$ is an open set, there exists $\bomega_i \in \bX_i$, $R_i > 0$ such that
$$ \bB_i := \{\vx \in \R^n \; ; |\vx - \bomega_i| \leq R_i\} \subset \bX_i. $$
According to \cite[\textbf{Proposition 2.25}]{Lee2013manifolds}, there exists a smooth bump function $\varphi_i \in C^\infty(\R^n)$ such that:
\begin{itemize}
\item $\forall \vx \in \R^n \setminus \bX_i$, $\varphi_i(\vx) = 0$
\item $\forall \vx \in \bB_i$, $\varphi_i(\vx) = 1$
\item $\forall \vx \in \bX_i \setminus \bB_i$, $0 \leq \varphi_i(\vx) \leq 1$.
\end{itemize}
In particular, $\varphi_i \geq 0$ in $\bX_i$ and $\varphi_i = 0$ on $\partial \bX_i$. Next, we define for $\vx \in \R^n$,
$$ \psi_i(\vx) := \frac{\lambda(\bX_i)}{\displaystyle\int_{\bX_i} \varphi_i \; d\lambda} \varphi_i(\vx),$$
with $0 < \lambda(\bB_i) < \displaystyle\int_{\bX_i} \varphi_i \; d\lambda$ by design of $\varphi_i$.

\vspace*{0.5em}
Again, $\psi_i \geq 0$ in $\bX_i$ and $\psi_i = 0$ on $\partial \bX_i$. Moreover, now
\begin{equation} \label{eq:int} \tag{$\ast$}
\int_{\bX_i} \psi_i \; d\lambda = \lambda(\bX_i). 
\end{equation}
Eventually, we construct, for $\vx \in \R^n$,
$$ \phi(\vx) := 1 - \sum_{i=1}^\Omega \psi_i,$$
so that condition \textit{1.} is trivially satisfied, and smoothness of $\phi$ follows from smoothness of the $\varphi_i$s. We conclude by checking condition \textit{2.}: for $i \in \{1,\ldots,\Omega\}$,
\begin{align*}
\int_{\bX_i} \phi \; d\lambda & = \int_{\bX_i} \left( 1 - \sum_{j=1}^\Omega \psi_j \right) d\lambda = \int_{\bX_i} 1 \; d\lambda - \sum_{j=1}^\Omega \int_{\bX_i} \psi_j \; d\lambda \\
& = \lambda(\bX_i) - \int_{\bX_i} \psi_i \; d\lambda - \sum_{\substack{j=1 \\ j \neq i}}^\Omega \cancel{\int_{\bX_i} \psi_j \; d\lambda} \stackrel{\eqref{eq:int}}{=} 0
\end{align*}
\end{proof}

\begin{lem}[Existence of smooth PDE solution] \label{lem:solution} \leavevmode

Let $\phi \in C^\infty(\overline{\bX})$ be given by {\normalfont \textbf{Lemma \ref{lem:source}}}. Then, there exists a solution $u \in C^\infty(\overline{\bX})$ to the Poisson PDE with Neumann boundary condition:
\begin{equation} \tag{\ref*{eq:poisson}}
    \left\lbrace\begin{array}{rcllr}
    -\Delta u &= & \phi & \text{in} & \bX \\
    \partial_\vn u & = & 0 & \text{on} & \partial\bX
\end{array}\right.
\end{equation}
\end{lem}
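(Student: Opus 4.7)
The plan is to reduce the problem to a standard Neumann problem on each connected component of $\bX$ and then to apply classical linear elliptic theory. First, I would exploit \textbf{Condition \ref{cond:smooth}} to upgrade the geometric setting: since $\grad h(\vx) \neq 0$ at every $\vx \in \partial \bX = h^{-1}(0) \cap \overline{\bX}$ and $h$ is polynomial, the implicit function theorem makes $\partial \bX$ a compact $(m-1)$-dimensional $C^\infty$ submanifold of $\R^m$, and the outward unit normal is $\vn = -\grad h / |\grad h|$. In particular, each connected component $\bX_i$ in the decomposition $\bX = \bigsqcup_{i=1}^\Omega \bX_i$ is a bounded domain with smooth boundary, on which classical PDE theory applies.

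Second, I would solve the Neumann problem
\begin{equation*}
    -\Delta u_i = \phi \quad \text{in } \bX_i, \qquad \partial_\vn u_i = 0 \quad \text{on } \partial \bX_i
\end{equation*}
separately on each component. The Fredholm compatibility condition, obtained by integrating the PDE and applying the divergence theorem, reads $\int_{\bX_i} \phi \; \od\lambda = \int_{\partial \bX_i} \partial_\vn u_i \; \od\sigma = 0$, and this is precisely what statement \textit{2.} of \textbf{Lemma \ref{lem:source}} provides. On the quotient Hilbert space $H^1(\bX_i)/\R$ the Dirichlet form $(u,v) \mapsto \int \grad u \cdot \grad v \; \od\lambda$ is coercive by the Poincar\'e--Wirtinger inequality, and the linear functional $v \mapsto \int \phi \, v \; \od\lambda$ is well-defined on the quotient thanks to the compatibility condition; the Lax--Milgram lemma then yields a unique weak solution $u_i \in H^1(\bX_i)/\R$, which I fix by an additive constant.

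Third, I would upgrade the weak solutions to smooth ones via standard elliptic regularity. Because $\phi \in C^\infty(\overline{\bX_i})$, $\partial \bX_i$ is $C^\infty$, and the Neumann boundary operator is a first-order operator satisfying the Lopatinski\u{\i}--Shapiro complementing condition for $-\Delta$, the Agmon--Douglis--Nirenberg / Schauder estimates (see for instance \cite{gilbarg1977elliptic} and the standard bootstrap for Neumann problems) give $u_i \in C^{k,a}(\overline{\bX_i})$ for every $k \in \N$ and $a \in (0,1)$, hence $u_i \in C^\infty(\overline{\bX_i})$. Gluing the $u_i$ together across the disjoint components produces $u \in C^\infty(\overline{\bX})$ solving \eqref{eq:poisson}.

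The main obstacle is the boundary regularity in the third step: the Neumann condition interacts with the Schauder estimates in a slightly more delicate way than Dirichlet, and one needs the $C^\infty$ regularity of $\partial \bX$ obtained in the first step, together with the compatibility of $\phi$ built into \textbf{Lemma \ref{lem:source}}, to close the bootstrap. All the remaining steps are routine applications of linear elliptic PDE theory on smooth bounded domains.
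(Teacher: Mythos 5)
Your proof plan is correct and takes essentially the same route as the paper: solve the Neumann problem on each connected component via classical elliptic theory (the paper's one-line citation to Gilbarg--Trudinger) and glue the solutions across the disjoint closures $\overline{\bX}_i$. You simply unpack what "classical result" hides — the Fredholm compatibility condition (which is exactly why statement \textit{2.}\ of \textbf{Lemma \ref{lem:source}} was arranged), the Lax--Milgram weak solution on $H^1/\R$, and the Schauder bootstrap to $C^\infty$ — so the content matches; you have just written out the details the paper delegates to the reference.
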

\begin{proof}
If $\Omega = 1$ (\textit{i.e.} $\bX$ is connected), then this is a classical result, see e.g. \cite{gilbarg1977elliptic}\footnote{Smoothness of $\partial \bX$ is necessary here. This is the reason for \textbf{Assumption \ref{cond:smooth}}.}. Else, we just solve the problem separately on each connected component and glue the resulting solutions $u_i$ together into
$$ u = \sum_{i=1}^\Omega u_i \, \ind_{\bX_i} \in C^\infty(\overline{\bX})$$
because by construction of $\bX$ (with smooth boundary) the $\overline{\bX}_i$ are disjoint.
\end{proof}
The $\phi$ and $u$ given by \textbf{Lemmas \ref{lem:source}} and \textbf{\ref{lem:solution}} are a valid solution to \eqref{eq:poisson}. By construction, they also have the required smoothness. It remains to show that we can choose $\phi,u$ such that the functions $\vu,w$ given by (\ref{eq:vu,wfromPDE}) are optimal for (\ref{eq:stokesd}). Let us take $\phi$ as in Lemma \ref{lem:source} and $u$ the corresponding solution of (\ref{eq:poisson}). By the above, the functions $\vu,w$ from (\ref{eq:vu,wfromPDE}) are smooth. Further, as shown in \cite{tacchi2023stokes}, $\vu,w$ are feasible (and optimal) for (\ref{eq:stokesd}). Here, we only recall optimality, i.e.
\begin{equation}\label{eq:intwVolumeO} \tag{$\triangledown$}
    \displaystyle\int w\; \od \lambda = \lambda(\bX).
\end{equation}
To show (\ref{eq:intwVolumeO}), we use condition 2. in Lemma \ref{lem:source} and simply integrate $w = 1-\phi \geq 0$ on $\bX$. This gives
\begin{eqnarray*}
    \displaystyle\int w\; \od \lambda_\bK  = \displaystyle\int 1-\phi\; \od \lambda_\bK = \lambda(\bX) - \displaystyle\int \phi\; \od \lambda_\bK = \lambda(\bX).
\end{eqnarray*}
\end{proof}

\bibliographystyle{abbrv}
\bibliography{references3}

\end{document}